\newcommand{\dotT}{\dot{T}}
\newcommand{\telque}{{\,;\,}}
\newcommand{\bbK}{{\mathbb{K}}}
\newcommand{\bbR}{{\mathbb{R}}}
\newcommand{\bbZ}{{\mathbb{Z}}}
\newcommand{\calD}{{\mathcal{D}}}
\newcommand{\calE}{{\mathcal{E}}}
\newcommand{\calF}{{\mathcal{F}}}
\newcommand{\supp}{{\mathrm{supp\,}}}
\newcommand{\WF}{{\mathrm{WF}}}
\newtheorem{thm}{Theorem}[section]
\newtheorem{cor}[thm]{Corollary}
\newtheorem{lem}[thm]{Lemma}
\newtheorem{prop}[thm]{Proposition}
\theoremstyle{definition}
\newtheorem{dfn}[thm]{Definition}
\numberwithin{equation}{section}
\begin{document}


\baselineskip=17pt


\title{Continuity of the fundamental
operations on distributions having a specified wave front set
(with a counter example by Semyon Alesker)}

\author{Christian Brouder\\
Sorbonne Universit\'es, UPMC Univ. Paris 06,
CNRS UMR 7590,
\\ Mus\'eum National d'Histoire
Naturelle, IRD UMR 206,\\
Institut de Min\'eralogie, de Physique des Mat\'eriaux et de
Cosmochimie, \\
4 place Jussieu, F-75005 Paris, France.\\
\\
Nguyen Viet Dang\\
Laboratoire Paul Painlev\'e (U.M.R. CNRS 8524)\\
Universit\'e de Lille 1\\
59 655 Villeneuve d'Ascq C\'edex France.\\
\\
Fr\'ed\'eric H\'elein\\
Institut de Math\'ematiques de Jussieu Paris Rive Gauche,\\ 
Universit\'e Denis Diderot Paris 7,
B\^atiment Sophie
Germain\\ 75205 Paris Cedex 13, France.}
\date{}

\maketitle


\renewcommand{\thefootnote}{}

\footnote{2010 \emph{Mathematics Subject Classification}: Primary 46F10; Secondary 35A18.}

\footnote{\emph{Key words and phrases}: microlocal analysis, functional analysis, mathematical physics,
renormalization.}

\renewcommand{\thefootnote}{\arabic{footnote}}
\setcounter{footnote}{0}


\begin{abstract}
The pull-back, push-forward and multiplication
of smooth functions can be extended to distributions if 
their wave front set satisfies some conditions.
Thus, it is natural to investigate the topological
properties of these operations between 
spaces $\calD'_\Gamma$ of distributions having a wave front set
included in a given closed cone $\Gamma$ of the cotangent space.
As discovered by S. Alesker, the pull-back is not continuous
for the usual topology on $\calD'_\Gamma$,
and the tensor product is not separately continuous.
In this paper, a new topology is
defined for which the pull-back and push-forward
are continuous, the tensor and convolution products and the multiplication
of distributions are hypocontinuous.
\end{abstract}

\section{Introduction}
The motivation
of our work comes
from the renormalization
of QFT in
curved space times,
indeed the
question addressed
in this
paper cannot
be avoided in this context 
and
also the technical
results of this paper
form the core 
of the proof
that perturbative
quantum field theories
are renormalizable
on curved space times \cite{Dangthese,DabrowskiDang}.

 Since L.~Schwartz, we know that the tensor product of distributions
is continuous~\cite[p.~110]{Schwartz-66}
and the product of a distribution by a smooth function
is hypocontinuous~\cite[p.~119]{Schwartz-66} (see definition \ref{hypodef}), although 
it is not jointly continuous~\cite{Kucera-81}.

However, in many applications (for instance the multiplication of 
distributions),
we cannot work with all distributions and we must consider the 
subsets $\calD'_\Gamma$ of distributions 
whose wave front set~\cite{WF1} is included
in some closed subsets $\Gamma$ of 
$\dotT^*\bbR^n=\{(x;\xi)\in T^*\bbR^n\telque \xi\not=0\}$,
where $\Gamma$ is a \emph{cone} in the sense that $(x;\xi)\in\Gamma$
implies $(x;\lambda\xi)\in\Gamma$ for every $\lambda\in\mathbb{R}_{>0}$.
Indeed the spaces $\calD'_\Gamma$ are widely used in 
microlocal analysis because wave front set conditions
rule the so-called \emph{fundamental operations on distributions}:
multiplication, pull-back and push-forward.
The tensor product is also a fundamental operation, but it
holds without condition.

H\"ormander himself, who introduced the concept of a wave front 
set~\cite{Hormander-71}, equipped $\calD'_\Gamma$ with a 
\emph{pseudo-topology}~\cite[p.~125]{Hormander-71},
which is not a topology but
just a rule describing the convergence of sequences.
In particular, when H\"ormander writes that
the fundamental operations are continuous~\cite[p.~263]{HormanderI}, 
he means ``sequentially continuous''.
And indeed, H\"ormander and his followers proved
that, under conditions on the wave front set to
be described later, the following operators
are sequentially continuous: the pull-back of a distribution
by a smooth map~\cite[Thm 8.2.4]{HormanderI};
the push-forward of a distribution by a
proper map~\cite[p.~528]{Chazarain}, the tensor
product of two distributions~\cite[p.~511]{Chazarain}
and the multiplication of two 
distributions~\cite[p.~526]{Chazarain}.

However, sequential continuity was soon found to
be too weak for some applications and 
Duistermaat~\cite[p.~18]{Duistermaat}
equipped $\calD'_\Gamma$ with a locally convex
topology defined in terms of
the following seminorms~\cite[p.~80]{Grigis}:
\begin{itemize}
\item[(i)] All the seminorms on $\calD'(\bbR^n)$ for the weak
topology: $||u||_\phi=|\langle u,\phi\rangle|$ for
all $\phi\in\calD(\bbR^n)$.
\item[(ii)] The seminorms 
$||u||_{N,V,\chi} = \sup_{k\in V} (1+|k|)^N |\widehat{u\chi}(k)|$,
where $N\ge 0$, $\chi\in \calD(\bbR^n)$, and $V\in \bbR^n$ is a closed
cone with $\supp\chi\times V \cap \Gamma=\emptyset$.
\end{itemize}
These seminorms give $\calD'_\Gamma$ the structure of a
locally convex vector space and the corresponding topology is usually
called \emph{H\"ormander's topology}.
It probably first appeared
 in the 1970-1971 lecture notes
by Duistermaat~\cite{Duistermaat}, although the seminorms
$||\cdot||_{N,V,\chi}$ are already mentioned by
H\"ormander~\cite[p.~128]{Hormander-71}.
The (sequential) convergence in the sense
of H\"ormander is~:
a sequence $(u_j)\in \calD'_\Gamma$ 
converges to $u$ in $\calD'_\Gamma$ if and only if
$||u_j-u||_\phi\to 0$ for every $\phi\in \calD(\bbR^n)$
and $||u_j-u||_{N,V,\chi}\to 0$ for 
every $\chi\in \calD(\Omega)$,
every $N\in\mathbb{N}$ and every closed cone
  $V$ in $\bbR^n$ such that 
   $\supp\chi\times V \cap \Gamma=\emptyset$.
Therefore, it is clear that a sequence converges
in the sense of H\"ormander if and only if it
converges in the sense of H\"ormander's topology.

However, for a locally convex space such as
$\calD'_\Gamma$ (which is not metrizable), sequential continuity and 
topological continuity are not equivalent.
Therefore, when Duistermaat states, after defining
the above topology, that 
the pull-back~\cite[p.~19]{Duistermaat},
the push-forward~\cite[p.~20]{Duistermaat} and 
the product of
distributions~\cite[p.~21]{Duistermaat} are continuous,
it is not clear whether he means sequential or topological
continuity. 
When investigating this question for applications
to valuation theory~\cite{Alesker-10}, Alesker discovered
a counterexample proving that the tensor product is
not separately continuous and the pull-back is not
continuous for H\"ormander's topology. In other words,
this topology is too weak to be useful for these questions.

The purpose of the present paper is to 
describe Alesker's counterexample and to
define a topology for which the fundamental operations
have optimal continuity properties:
the tensor product is hypocontinuous,
the pull-back by a smooth map is continuous,
the pull--back
by a family of smooth maps depending 
smoothly on parameters is uniformly
continuous,
the push-forward  by a smooth map is
also continuous, the push--forward by a family
of smooth maps depending
smoothly on parameters is uniformly
continuous, 
the multiplication of distributions
and the
convolution product
are hypocontinuous.
Finally, we discuss how 
the wave front set of distributions
on manifolds can be defined in an intrinsic way.
In appendices, we prove important technical results
concerning the covering of the complement of $\Gamma$,
the topology of $\calD'_\emptyset$ and
the fact that the additional seminorms used to define
the topology of $\calD'_\Gamma$ can be taken to be 
countable.

The main applications of these results are to 
replace technical microlocal proofs by classical topological
statements~\cite{DabrowskiDang,Dangthese}.

\section{Alesker's counterexample}
\label{aleskercounterex}
Semyon Alesker discovered the following counter-example

\begin{prop}
Let $f:\bbR^2\to \bbR$ be the projection to the first 
coordinate. Let $\Gamma=\dotT^*\bbR$, so that
$\calD'_\Gamma(\bbR)=\calD'(\bbR)$, then
$f^*\Gamma=\{(x_1,x_2;\xi_1,0)\}$. We claim that
the map $f^*: \calD'_\Gamma(\bbR) \to \calD'_{f^*\Gamma}(\bbR^2)$
is not topologically continuous for the H\"ormander topology.
\end{prop}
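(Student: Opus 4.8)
The plan is to exhibit a sequence (or, better, a net, but a sequence suffices here) of distributions $u_j \in \calD'(\bbR)$ converging to $0$ in the H\"ormander topology of $\calD'_\Gamma(\bbR)$, whose pull-backs $f^*u_j = u_j \otimes 1$ do \emph{not} converge to $0$ in the H\"ormander topology of $\calD'_{f^*\Gamma}(\bbR^2)$. Since in a locally convex space a map is continuous iff it is sequentially continuous \emph{when restricted to convergent sequences} is \emph{not} automatic --- but continuity does imply sequential continuity --- producing such a sequence refutes continuity. Concretely, because $\Gamma = \dotT^*\bbR$ the space $\calD'_\Gamma(\bbR)$ is just $\calD'(\bbR)$ with the weak topology (condition (ii) in Definition \ref{convdef} is vacuous, as there is no closed cone $V$ with $\supp\chi\times V\cap\dotT^*\bbR=\emptyset$ unless $\chi=0$). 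So the first task is: find $u_j \to 0$ weakly in $\calD'(\bbR)$. The natural choice is $u_j(x) = e^{ijx}$, or a mollified/cutoff version $u_j = \chi_0(x)e^{ijx}$ with $\chi_0\in\calD(\bbR)$ fixed: by Riemann--Lebesgue $\langle u_j,\phi\rangle \to 0$ for every test function $\phi$, so $u_j\to 0$ in $\calD'(\bbR)=\calD'_\Gamma(\bbR)$.

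The heart of the argument is then the failure of convergence upstairs. We have $f^*u_j = u_j\otimes 1$, a distribution on $\bbR^2$ whose wave front set lies along $\{(x_1,x_2;\xi_1,0)\}$, but whose singular directions point \emph{into} the allowed cone $f^*\Gamma$, so the seminorms $\|\cdot\|_{N,V,\chi}$ with $\supp\chi\times V\cap f^*\Gamma=\emptyset$ are the ones to test. Pick a cutoff $\psi\in\calD(\bbR^2)$ with $\psi\equiv 1$ near a point, and a closed cone $V\subset\bbR^2$ that is transverse to the $\xi_2$-axis --- for instance $V = \{(\xi_1,\xi_2): |\xi_1|\le |\xi_2|\}$, which indeed satisfies $\supp\psi\times V\cap f^*\Gamma=\emptyset$ since $f^*\Gamma$ sits on $\xi_2=0$. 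Compute $\widehat{(u_j\otimes 1)\psi}(\xi_1,\xi_2) = \widehat{u_j\,\psi_1}(\xi_1-?)\cdots$ --- more precisely, if $\psi(x_1,x_2)=\psi_1(x_1)\psi_2(x_2)$ is a product cutoff (enough, by taking $\psi$ supported in a small box), then $\widehat{(u_j\otimes1)\psi}(\xi_1,\xi_2) = \widehat{u_j\psi_1}(\xi_1)\,\widehat{\psi_2}(\xi_2)$, and $\widehat{u_j\psi_1}(\xi_1) = \widehat{\chi_0\psi_1}(\xi_1-j)$ is a fixed Schwartz function translated by $j$. Now evaluate the seminorm $\|f^*u_j\|_{N,V,\psi} = \sup_{(\xi_1,\xi_2)\in V}(1+|\xi|)^N|\widehat{\chi_0\psi_1}(\xi_1-j)||\widehat{\psi_2}(\xi_2)|$ along the slice $\xi_1 = j$, $\xi_2 = j$ (which lies in $V$ since $|\xi_1|=|\xi_2|$): this gives a lower bound $(1+\sqrt{2}\,|j|)^N\,|\widehat{\chi_0\psi_1}(0)|\,|\widehat{\psi_2}(j)|$. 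The subtlety: $|\widehat{\psi_2}(j)|\to 0$ rapidly, so this particular choice kills the bound. The fix is to \textbf{not} translate in $\xi_2$: choose instead $V$ a \emph{narrow} cone around the $\xi_2$-axis, still disjoint from $f^*\Gamma$ away from $0$, and evaluate on the slice $\xi_1 = j$, $\xi_2 = 0$ --- wait, that slice is \emph{not} in a cone around the $\xi_2$-axis either. The genuinely correct choice, and the one I expect the author to use, is $V$ a thin cone around a direction like $(1,1)$ or indeed to keep $V = \{|\xi_1|\le|\xi_2|\}$ but to note that we need the seminorm to blow up, and since $\widehat{\psi_2}$ decays we must instead fatten the test: replace $1$ upstairs by a different extension, or --- cleaner --- use the fact that $\|\cdot\|_{N,V,\psi}$ for the \emph{constant-in-}$x_2$ distribution forces, on any slice $\xi_2 = c$ with $|\xi_1|\le|c|$ and $|\xi_1-j|$ bounded, the value $\gtrsim (1+|j|)^N |\widehat{\psi_2}(c)|$; choosing $c=c_j$ slowly growing so that $|\xi_1-j|\le 1$ requires $|c_j|\ge |j|-1$, and the decay of $\widehat{\psi_2}(c_j)$ beats $(1+|j|)^N$ only if... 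Here is where care is needed, and I flag this as \textbf{the main obstacle}: one must balance the polynomial gain $(1+|j|)^N$ in the seminorm against the Schwartz decay of $\widehat{\psi_2}$, and the resolution is that H\"ormander continuity must hold for \emph{every} $N$ and \emph{every} $V$ simultaneously, so it suffices to fix $N=0$: then $\|f^*u_j\|_{0,V,\psi} = \sup_{V}|\widehat{\chi_0\psi_1}(\xi_1-j)||\widehat{\psi_2}(\xi_2)|$, and taking $\xi_2$ small fixed (say $\xi_2=\xi_2^0\neq 0$ with $(\,\cdot\,,\xi_2^0)$ making the point lie in $V$ for $\xi_1$ ranging over an interval around $j$) gives $\sup \ge |\widehat{\chi_0\psi_1}(0)||\widehat{\psi_2}(\xi_2^0)| > 0$, a constant independent of $j$ --- hence $f^*u_j \not\to 0$. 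That settles non-continuity.

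To assemble the final write-up: (1) observe $\calD'_\Gamma(\bbR) = \calD'(\bbR)$ with the weak topology; (2) set $u_j = \chi_0 e^{ijx}$ and invoke Riemann--Lebesgue for $u_j\to 0$; (3) choose a product cutoff $\psi = \psi_1\otimes\psi_2$ and a closed cone $V\subset\bbR^2$ with $\supp\psi\times V\cap f^*\Gamma=\emptyset$ but $V$ containing directions with $\xi_2\neq 0$, $\xi_1$ free in an interval; (4) factor $\widehat{(u_j\otimes 1)\psi} = \widehat{\chi_0\psi_1}(\cdot - j)\otimes\widehat{\psi_2}$, evaluate the $N=0$ seminorm on the slice $\xi_1 \approx j$, $\xi_2 = \xi_2^0$ fixed in $V$, and bound it below by a positive constant; (5) conclude that $f^*u_j\not\to 0$ in $\calD'_{f^*\Gamma}(\bbR^2)$, so $f^*$ fails to be sequentially --- hence topologically --- continuous. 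For the corollary on the tensor product: the pull-back $f^*u = u\otimes 1$ is literally the tensor product with the constant distribution $1\in\calD'(\bbR)$, or rather one realizes $u\mapsto u\otimes v$ for fixed $v$ as a map exhibiting the same failure of separate continuity by the identical computation with $\widehat{v\psi_2}$ in place of $\widehat{\psi_2}$.
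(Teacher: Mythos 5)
Your approach cannot work, and the failure is not a missing epsilon but a structural problem. H\"ormander's theorem (cited in the paper as~\cite[Thm 8.2.4]{HormanderI}) says precisely that the pull-back is \emph{sequentially} continuous: if $u_j\to 0$ in the sense of H\"ormander, then $f^*u_j\to 0$ in the sense of H\"ormander, and Proposition~\ref{seq-Mackey} of the paper shows that sequential convergence for the H\"ormander topology coincides with convergence in the sense of H\"ormander. So there is \emph{no} sequence $u_j\to 0$ in $\calD'_\Gamma(\bbR)$ with $f^*u_j\not\to 0$ in $\calD'_{f^*\Gamma}(\bbR^2)$. The whole point of Alesker's example is that $\calD'(\bbR)$ with the weak topology is not bornological (in particular not metrizable), so a sequentially continuous map can fail to be topologically continuous --- and this is exactly what happens here. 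A counterexample must therefore be non-sequential.

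Your own computation confirms this. You set $u_j=\chi_0\,e^{ijx}$, take a product cutoff $\psi=\psi_1\otimes\psi_2$ and a cone $V$ bounded away from the $\xi_1$-axis, and you reach
$\|f^*u_j\|_{0,V,\psi}=\sup_{(\xi_1,\xi_2)\in V}\bigl|\widehat{\chi_0\psi_1}(\xi_1-j)\bigr|\,\bigl|\widehat{\psi_2}(\xi_2)\bigr|$.
Your proposed evaluation point $(\xi_1,\xi_2)=(j,\xi_2^0)$ with $\xi_2^0$ fixed does \emph{not} lie in $V$ once $|j|>|\xi_2^0|$ (or, more generally, once the direction $(j,\xi_2^0)/|(j,\xi_2^0)|$ enters the forbidden neighbourhood of $(1,0)$, which happens for every closed cone $V$ disjoint from $f^*\Gamma$ over $\supp\psi$). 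Splitting the supremum shows the seminorm actually tends to zero: on the region $|\xi_2|<j/2$ the cone condition forces $|\xi_1|<j/2$, so $|\xi_1-j|>j/2$ and the Schwartz factor $\widehat{\chi_0\psi_1}(\xi_1-j)$ is $O(j^{-\infty})$; on the region $|\xi_2|\ge j/2$ the other Schwartz factor $\widehat{\psi_2}(\xi_2)$ is $O(j^{-\infty})$. So $\|f^*u_j\|_{0,V,\psi}\to 0$ --- exactly as sequential continuity demands.

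The paper's proof sidesteps sequences entirely. Continuity of $f^*$ would mean that the single seminorm $\|f^*u\|_{0,V,\chi}$ is dominated by finitely many weak seminorms, i.e.\ there exist $\chi_1,\dots,\chi_t\in\calD(\bbR)$ with $\|f^*u\|_{0,V,\chi}\le\sup_i|\langle u,\chi_i\rangle|$ for all $u$. One rewrites $\|f^*u\|_{0,V,\chi}=\sup_{\xi}|\widehat{\varphi u}(\xi)|\,\omega(\xi)$ with $\omega(\xi)>0$ everywhere, then uses a linear-algebra argument: choose $\xi$ so that $\chi_1,\dots,\chi_t,\varphi e_\xi$ are linearly independent, and construct $u\in\calD'(\bbR)$ with $\langle u,\chi_i\rangle=0$ for all $i$ while $\widehat{\varphi u}(\xi)=1+1/\omega(\xi)$. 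Then the left side exceeds $1$ while the right side is $0$, a contradiction. This exploits the fact that weak seminorms have finite corank, which is genuinely a statement about the topology and not about convergent sequences. If you want to recast the paper's argument in terms of convergence, you would need a \emph{net} indexed by the finite subsets of $\calD(\bbR)$, not a sequence.
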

Note that the general definition of
$f^*\Gamma$ is given in Proposition~\ref{pullbackprop}.
\begin{proof}
Let $\varphi\in \calD(\bbR)$ such that 
$\varphi|_{[-1,1]}=1$. Take
$\chi=\varphi\otimes\varphi$, $V=\{(\xi_1,\xi_2)\in \bbR^2
   \telque |\xi_1|\le |\xi_2|\}$ and $N=0$. 
The intersection of $V$ with
$\{(\xi_1,0)\telque \xi_1\not=0\}$ is empty
because $|\xi_1|\le |\xi_2|=0$ implies $\xi_1=\xi_2=0$.
Therefore, 
$||\cdot||_{N,V,\chi}$ is a seminorm of 
$\calD'_{f^*\Gamma}$ and, if $f^*$ were continuous,
it would be possible to bound
$||f^*u||_{N,V,\chi}$ with 
$\sup_i |\langle u,f_i\rangle|$ for a finite set of $f_i\in
\calD(\bbR)$ and every $u\in \calD'(\bbR)$.

We are going to show that this is not the case.
We have
\begin{eqnarray*}
 ||f^* u||_{0,V,\chi} &=& \sup_{\xi\in V} 
  |\widehat{\varphi u}(\xi_1)|\,
|\widehat{\varphi}(\xi_2)|
= 
\sup_{\xi_1} 
  |\widehat{\varphi u}(\xi_1)|\,
  \omega(\xi_1),
\end{eqnarray*}
where  $\omega(\xi_1)=\sup_{|\xi_2|\ge|\xi_1|}
|\widehat{\varphi}(\xi_2)|$.
It is clear that $\omega(\xi_1)>0$ everywhere since
$\widehat{\varphi}$ is a real analytic function.
Thus we should show that the map 
$\calD'(\bbR)\to\bbR$ given by 
$u\mapsto \sup_{\xi\in \bbR} |\widehat{\varphi u}(\xi)|\omega(\xi)$
is not continuous (for a fixed $\omega>0$).

If the pull-back were continuous, there would
be a finite set $\chi_1,\dots,\chi_t$ of functions in $\calD(\bbR)$
such that
\begin{eqnarray*}
||f^* u||_{0,V,\chi} & \le & \sup_{i=1,\dots,t} 
   |\langle u,\chi_i\rangle|.
\end{eqnarray*}
We can find $\xi$ such that the functions 
$\chi_1,\dots,\chi_t$ and $\varphi(x) e^{-ix\xi}$
are linearly independent. Then there exists
$u\in \calD'(\bbR)$ such that
$\langle u,\chi_i\rangle = 0$ for $i=1,\dots,t$
and 
$\widehat{u\varphi}(\xi)=\langle u,\varphi e_\xi\rangle
= 1+ 1/\omega(\xi)$,
where $e_\xi(x)=e^{ -i\xi.x}$.
Then, 
$||f^*u||_{0,V,\chi}= 1 + \omega(\xi)$ and we reach
a contradiction.
\end{proof}
Thus, the pull-back is not continuous.
Moreover, the same example can be considered
as an exterior tensor product
$u\to u\boxtimes 1$. This shows that the exterior tensor
product is not separately continuous
for the H\"ormander topology.

\section{The normal topology and hypocontinuity}
\label{normaltopsect}
We now modify H\"ormander's topology and define
what we call the \emph{normal topology} of 
$\calD'_\Gamma$.  This is a locally convex
topology defined by the same seminorms
$||\cdot||_{N,V,\chi}$ as H\"ormander's
topology, but we replace the seminorms
$||\cdot||_\phi$ of the weak topology of
$\calD'(\bbR^n)$ by the seminorms
$p_B(u)=\sup_{\phi\in B} |\langle u,\phi\rangle|$
(where $B$ runs over the bounded sets of 
$\calD(\Omega)$) of the strong topology
of $\calD'(\bbR^n)$.
The functional properties of this topology, like
completeness, duality, nuclearity,
PLS-property,
bornologicity,
were investigated in detail~\cite{Dabrouder-13}.
As in the case of standard distributions,
several operations will not be jointly
continuous but only hypocontinuous.
Let us recall
\begin{dfn}
\label{hypodef}
\cite[p.~423]{Treves}
Let $E$, $F$ and $G$ be topological vector spaces. 
A bilinear map
$f:E\times F\to G$ is said to be \emph{hypocontinuous}
if: (i) for
every neighborhood $W$ of zero in $G$
and every bounded set $A\subset E$ there is
a neighborhood $V$ of zero in $F$
such that
$f(A\times V)\subset W$ and
(ii) for
every neighborhood $W$ of zero in $G$ and
every bounded set $B\subset F$ there is
a neighborhood $U$ of zero in $E$
such that
$f(U\times B)\subset W$.
\end{dfn}
If $E$, $F$ and $G$ are locally convex spaces
with topologies defined by the families
of seminorms $(p_i)_{i\in I}$, $(q_j)_{j\in J}$ and
$(r_k)_{k\in K}$, respectively, the definition of hypocontinuity 
can be translated into
the following two conditions: (i) For every bounded
set $A$ of $E$ and every seminorm $r_k$, there
is a constant $M$ and a finite set of seminorms
$q_{j_1},\dots,q_{j_n}$ 
(both depending only on $k$ and $A$) 
such that 
\begin{eqnarray}
\forall x\in A, r_k\big(f(x,y)\big) & \le & M 
  \sup\{ q_{j_1}(y),\dots , q_{j_n}(y)\};
\label{hypogenA}
\end{eqnarray}
and (ii) For every bounded
set $B$ of $F$ and every seminorm $r_k$, there
is a constant $M$ and a finite set of seminorms
$p_{i_1},\dots, p_{i_n}$ 
(both depending only on $k$ and $B$) 
such that
\begin{eqnarray}
 \forall y\in B, r_k\big(f(x,y)\big) & \le & M 
  \sup\{ p_{i_1}(x),\dots , p_{i_n}(x)\}.
\label{hypogenB}
\end{eqnarray}
Equivalently~\cite[p.~155]{Kothe-II}, 
we can reformulate 
hypocontinuity
using the 
concept of
equicontinuity~\cite[p.~200]{Horvath}
that is defined as follows~:
\begin{dfn}\label{equicontdefpremiere}
In the general context
of a 
locally 
convex topological vector space
$E$
with seminorms
$(p_\alpha)_{\alpha\in A}$. 
Let $E^*$
be its topological
dual, a set
$H$ in
$E^*$ is called
\emph{equicontinuous}
if and only if the family
of maps
$\ell_v:=u\in E\longmapsto 
\left\langle
u,v
\right\rangle\in \mathbb{R}$
is \textbf{uniformly}
continuous
when $v$ runs
over the set $H$.
\end{dfn}
Hence $f$ is hypocontinuous if
for every bounded set $A$ of $E$ and every
bounded set $B$ of $F$ the sets of maps
$\{f_x\telque x\in A\}$ and 
$\{f_y\telque y\in B\}$ are equicontinuous,
where $f_x: E\to G$ and $f_y: F\to G$ are defined
by $f_x(y)=f_y(x)=f(x,y)$.



\section{Tensor product of distributions}
\label{tensprodsect}

Let $\Omega_1$ and $\Omega_2$ be open sets in 
$\bbR^{d_1}$ and $\bbR^{d_2}$, respectively, and
$(u,v)\in \calD'_{\Gamma_1}\times
\calD'_{\Gamma_2}$,
where $\calD'_{\Gamma_1}\subset\calD'(\Omega_1)$ and
$\calD'_{\Gamma_2}\subset\calD'(\Omega_2)$. Then the
tensor product $u\otimes v$ belongs to 
$\calD'_\Gamma\subset \calD'(\Omega_1\times\Omega_2)$ 
where 
\begin{eqnarray*}
\Gamma &= & 
\big(\Gamma_1 \times \Gamma_2\big) \cup 
\big((\Omega_1\times\{0\})\times \Gamma_2\big)
\cup \big(\Gamma_1\times (\Omega_2\times\{0\})\big)
\\&=&
\left(\Gamma_1\cup\{\underline{0}\}_1\right)\times 
  \left(\Gamma_2\cup\{\underline{0}\}_2\right)\setminus 
  \{(\underline{0},\underline{0})\},
\end{eqnarray*}
$\{\underline{0}\}_1$ means $\Omega_1\times\{0\}$,
$\{\underline{0}\}_2$ means $\Omega_2\times\{0\}$
and $\{\underline{0},\underline{0}\}$ 
means $(\Omega_1\times\Omega_2)\times\{0,0\}$.
Our goal in this section is to show that the 
tensor product is hypocontinuous for the
normal topology.
We denote by $(z;\zeta)$ the coordinates
in $T^*(\Omega_1\times\Omega_2)$,
where $z=(x,y)$ with $x\in \Omega_1$ and $y\in \Omega_2$,
$\zeta=(\xi,\eta)$ with 
$\xi\in \bbR^{d_1}$ and $\eta\in \bbR^{d_2}$.
We also denote $d=d_1+d_2$, so that $\zeta\in \bbR^d$.

\begin{lem}
The seminorms of the strong topology of $\calD'(\bbR^d)$
and the family of seminorms:
\begin{equation}
\Vert t_1\otimes t_2\Vert_{N,V,\varphi_1\otimes
\varphi_2}=\sup_{\zeta\in V}
(1+|\zeta|)^N\vert \widehat{t_1\varphi_1}(\xi)\vert\,\vert
   \widehat{t_2\varphi_2}(\eta)\vert,
\end{equation}
where $\zeta=(\xi,\eta)$,
$(\varphi_1,\varphi_2)\in\calD(\Omega_1)\times \calD(\Omega_2)$ 
and $V\subset\mathbb{R}^{d}$, are such that
$\left(\supp\left(\varphi_1\otimes\varphi_2\right)\times V\right)
\cap \Gamma=\emptyset$,
are a fundamental system of seminorms for the normal
topology of $\calD'_\Gamma$.
\end{lem}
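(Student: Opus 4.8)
The plan is to show that the two families of seminorms — the ones I want to use, call them $\mathcal{S}_{\text{new}}$ (the strong seminorms $p_B$ on $\calD'_\Gamma$ together with the product seminorms $\Vert\cdot\Vert_{N,V,\varphi_1\otimes\varphi_2}$ above) and the defining normal-topology seminorms $\mathcal{S}_{\text{norm}}$ (the strong seminorms $p_B$ plus the H\"ormander seminorms $\Vert\cdot\Vert_{N,V,\chi}$ for \emph{arbitrary} $\chi\in\calD(\Omega_1\times\Omega_2)$ and arbitrary closed cones $V$ with $\supp\chi\times V\cap\Gamma=\emptyset$) — define the same topology. Since the strong seminorms are common to both, this reduces to two estimates: (a) each product seminorm $\Vert\cdot\Vert_{N,V,\varphi_1\otimes\varphi_2}$ is dominated by finitely many seminorms from $\mathcal{S}_{\text{norm}}$; and (b) each H\"ormander seminorm $\Vert\cdot\Vert_{N,V,\chi}$ with general $\chi$ is dominated by finitely many seminorms from $\mathcal{S}_{\text{new}}$.

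Direction (a) is the easy one. If $\chi=\varphi_1\otimes\varphi_2$, then for a distribution of the form $t_1\otimes t_2$ one has $\widehat{(t_1\otimes t_2)\chi}(\xi,\eta)=\widehat{t_1\varphi_1}(\xi)\,\widehat{t_2\varphi_2}(\eta)$, and $\supp\chi\times V\cap\Gamma=\emptyset$ is exactly the admissibility condition, so $\Vert t_1\otimes t_2\Vert_{N,V,\varphi_1\otimes\varphi_2}=\Vert t_1\otimes t_2\Vert_{N,V,\chi}$ on the nose; the product seminorm is literally the restriction of a genuine H\"ormander seminorm (with a product test function) to tensor products. Strictly, one should note the family of tensor-product test functions $\varphi_1\otimes\varphi_2$ is a subfamily of all $\chi$, so (a) is immediate.

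Direction (b) is the real work, and the main obstacle. One must bound $\Vert u\Vert_{N,V,\chi}$ for an \emph{arbitrary} $\chi\in\calD(\Omega_1\times\Omega_2)$ and an arbitrary closed cone $V$ with $\supp\chi\times V\cap\Gamma=\emptyset$, using only the strong seminorms and the product seminorms. The strategy is to first write $\chi$ as a finite sum of products $\varphi_1^{(k)}\otimes\varphi_2^{(k)}$ up to an error that is \emph{small in the smooth category}: by a partition of unity and a Fourier-series (or Stone--Weierstrass) expansion on a box containing $\supp\chi$, for any $\varepsilon>0$ and any Sobolev/$C^m$ order one can write $\chi=\sum_{k=1}^{L}\varphi_1^{(k)}\otimes\varphi_2^{(k)}+r$ with $\pi_{M,K}(r)<\varepsilon$ and all supports inside a fixed relatively compact neighborhood $\Omega_0$ of $\supp\chi$ on which still $\Omega_0\times V\cap\Gamma=\emptyset$. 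The piece $\sum_k\varphi_1^{(k)}\otimes\varphi_2^{(k)}$ is handled by the product seminorms $\Vert\cdot\Vert_{N,V,\varphi_1^{(k)}\otimes\varphi_2^{(k)}}$ term by term — but here one must be careful: the product seminorm as written applies to $t_1\otimes t_2$, not to a general $u$; so in fact one wants to enlarge $\mathcal{S}_{\text{new}}$ (or reinterpret the lemma) so that $\Vert\cdot\Vert_{N,V,\varphi_1\otimes\varphi_2}$ means $\sup_{\zeta\in V}(1+|\zeta|)^N|\widehat{u(\varphi_1\otimes\varphi_2)}(\zeta)|$ for arbitrary $u$, which is just $\Vert u\Vert_{N,V,\varphi_1\otimes\varphi_2}$ with a product test function — i.e. the product-test-function subfamily of the full H\"ormander family. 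With that reading, $\sum_k$ is controlled directly. The remainder $r$ is controlled using Lemma~\ref{Linflem}: $\widehat{ur}(\zeta)$ decays polynomially with an order $M$ and constant controlled by $\pi_{M,K}(r)$ — but $M$ depends on a \emph{bounded set} of distributions, not on a single $u$, so to make this a seminorm estimate valid for all $u$ one must instead argue continuity directly. Concretely: fix the seminorm $\Vert\cdot\Vert_{N,V,\chi}$ we are trying to dominate; we show the identity map $(\calD'_\Gamma,\mathcal{S}_{\text{new}})\to(\calD'_\Gamma,\mathcal{S}_{\text{norm}})$ is continuous, equivalently bounded on a neighborhood basis, and for this it suffices that every $\mathcal{S}_{\text{norm}}$-neighborhood of $0$ contains an $\mathcal{S}_{\text{new}}$-neighborhood; the decomposition above does exactly this once $\varepsilon$ is chosen after $N$ and the relevant order $M$ (from Lemma~\ref{Linflem}, applied to the \emph{unit ball} of some strong seminorm, which is bounded) are fixed. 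So the logical order is: (1) fix $\chi, V, N$; (2) choose $\Omega_0$ and the order $M$ from Lemma~\ref{Linflem} applied to a strong-bounded set; (3) split $\chi = \sum_k\varphi_1^{(k)}\otimes\varphi_2^{(k)} + r$ with $\pi_{M,K}(r)$ small; (4) bound the sum by product seminorms and $r$ by a strong seminorm times the small constant; (5) conclude the two topologies agree. The delicate point throughout is keeping the support-versus-cone admissibility condition $\supp\chi\times V\cap\Gamma=\emptyset$ intact for all the pieces $\varphi_i^{(k)}$ and for $r$, which is why one passes to a fixed neighborhood $\Omega_0$ at the start; since $\Gamma$ is closed and $\supp\chi$ compact this is possible.
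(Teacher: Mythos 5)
Your plan splits into (a) the trivial direction and (b) dominating a general $\Vert\cdot\Vert_{N,V,\chi}$ by product seminorms plus strong seminorms. Direction (a) is fine. But step~(4) of your plan for (b) --- bounding the remainder $r$ ``by a strong seminorm times the small constant,'' using Lemma~\ref{Linflem} --- does not work, and this is a genuine gap, not a technicality. Lemma~\ref{Linflem} gives an estimate of the form $(1+|\zeta|)^{-M}|\widehat{ur}(\zeta)|\leq C\,\pi_{M,K}(r)$, i.e.\ a \emph{polynomial growth} bound of order $+M$. What you need, to control the contribution to $\Vert u\Vert_{N,V,\chi}$, is $(1+|\zeta|)^{+N}|\widehat{ur}(\zeta)|$ uniformly bounded for $\zeta\in V$ --- a \emph{rapid decay} statement. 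These have opposite signs in the exponent, so making $\pi_{M,K}(r)$ small buys you nothing: $(1+|\zeta|)^{N}|\widehat{ur}(\zeta)|\lesssim(1+|\zeta|)^{N+M}\pi_{M,K}(r)$ is unbounded over $V$ no matter how small $\pi_{M,K}(r)$ is. The rapid decay of $\widehat{ur}$ in $V$ follows from the wave front set hypothesis on $u$ (via $\supp r\times V\cap\Gamma=\emptyset$), not from $r$ being small --- and expressing that decay quantitatively is exactly the seminorm $\Vert u\Vert_{N,V,r}$ with a general, non-product test function $r$, which is what you set out to dominate. The approximation argument is therefore circular as written. To repair it one would have to redo the convolution estimate ($\widehat{ur}=\widehat{u\tilde\chi_1\otimes\tilde\chi_2}*\widehat{r}/(2\pi)^d$, with $\tilde\chi_i$ product cutoffs equal to $1$ near $\supp r$, splitting the convolution into a near-$V$ part controlled by product seminorms and a far part controlled by polynomial growth and the rapid decay of $\widehat r$); but that is precisely the content of the lemma the paper cites, so you would be reproving it from scratch.

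The paper's route is different and much shorter: it does not approximate $\chi$ at all. It invokes a lemma of Grigis--Sj\"ostrand which states that if a family of pairs $(\chi_\alpha,V_\alpha)$ satisfies $(\supp\chi_\alpha\times V_\alpha)\cap\Gamma=\emptyset$ and the open sets $\{(x,\xi):\chi_\alpha(x)\neq0,\ \xi\in\mathring V_\alpha\}$ cover $\Gamma^c$, then the topology of $\calD'_\Gamma$ is already generated by the $\calD'$-seminorms and the $\Vert\cdot\Vert_{N,V_\alpha,\chi_\alpha}$. One then only has to observe --- a purely geometric one-line check, using that $\Gamma$ is closed and that around any $(x_0,y_0;\xi_0,\eta_0)\notin\Gamma$ one can pick small supports $\supp\varphi_1\ni x_0$, $\supp\varphi_2\ni y_0$ and a small conic neighbourhood $V\ni(\xi_0,\eta_0)$ with $(\supp\varphi_1\times\supp\varphi_2)\times V$ disjoint from $\Gamma$ --- that the subfamily of product test functions already covers $\Gamma^c$. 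There is no remainder term to control. I would recommend aligning with that argument: the covering criterion is exactly engineered to avoid the approximation-plus-remainder difficulty your plan runs into.
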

\begin{proof}

We use the following lemma~\cite[p.~80]{Grigis}
\begin{lem}
Let $\Omega$ be an open set in $\bbR^n$.
If we have a family, indexed by $\alpha\in A$,
of $\chi_\alpha\in \calD(\Omega)$
and of closed cones $V_\alpha\subset (\bbR^n\backslash\{0\})$
such that 
$(\supp\chi_\alpha\times V_\alpha)\cap \Gamma=\emptyset$ and
\begin{eqnarray*}
\Gamma^c &=& \bigcup_{\alpha\in A} \{(x,\xi)\in \dotT^*\Omega
  \telque \chi_\alpha(x)\not=0, \xi\in \mathring{V_\alpha}\},
\end{eqnarray*}
then the topology of $\calD'_\Gamma$ is already defined by
the strong topology of $\calD'(\Omega)$ and 
the seminorms $||\cdot||_{N,V_\alpha,\chi_\alpha}$.
\end{lem}
It is clear that the family indexed by $\varphi_1\otimes\varphi_2$
and $V$ such that $\supp(\varphi_1\otimes\varphi_2)\times
V\cap\Gamma=\emptyset$ satisfies the hypothesis of the
lemma.
\end{proof}

To establish the hypocontinuity 
of the tensor product,
we consider an arbitrary bounded set 
$B\subset\calD'_{\Gamma_1}(\Omega_1)$ and,
according to eq.~\eqref{hypogenA}, we must
show that, for every seminorm $r_k$ of 
$ \calD'_\Gamma(\Omega_1\times\Omega_2)$, there is a constant $M$ and a 
finite number of seminorms $q_j$ such that
$r_k(u\otimes v)\leqslant M\sup_j q_j(v)$ for 
every $u\in  B$ and every $v\in \calD'_{\Gamma_2}(\Omega_2)$.
By Schwartz' 
theorem ~\cite[p.~110]{Schwartz-66} we already know that this
is true for every seminorm $r_k$ of the 
strong topology of $\calD'_\Gamma(\Omega_1\times\Omega_2)$. It remains
to show it for every
$||\cdot||_{N,V,\varphi_1\otimes\varphi_2}$.
This will be done by first defining 
a suitable partition of unity on $\Omega_1\times \Omega_2$ 
and its corresponding cones. Then, this partition of unity
will be used to bound the seminorms by standard microlocal
techniques.

\begin{lem}\label{lemm0}
Let $\Gamma_1,\Gamma_2$ be closed cones in 
$\dotT^*\Omega_1$ and $\dotT^*\Omega_2$, respectively.
Set $\Gamma=\left(\Gamma_1\cup\{\underline{0}\}\right)\times 
\left(\Gamma_2\cup\{\underline{0}\}\right)\setminus 
\{(\underline{0},\underline{0})\}\subset \dotT^*\bbR^d$.
Then for all closed cones $V\subset\bbR^{d}$ and 
$\chi\in\calD(\Omega_1\times\Omega_2)$ 
such that
$\left(\supp\chi\times V\right)\cap\Gamma=\emptyset$,
there exist a partition of unity 
$(\psi_{j1}\otimes\psi_{j2})_{j\in J}$ 
of $\Omega_1\times\Omega_2$,
which is finite on $\supp\chi$,
and a family of closed cones $(W_{j1}\times W_{j2})_{j\in J}$ in 
$(\bbR^{d_1}\setminus\{0\})
\times (\bbR^{d_2}\setminus\{0\})$ such that
\begin{eqnarray}
(\supp\psi_{j1}\times W_{j1}^c)\cap \Gamma_1 &=& 
  (\supp\psi_{j2}\times W_{j2}^c)\cap \Gamma_2
= \emptyset,\\
V\cap\left((W_{j1}\cup\{0\})\times(W_{j2}\cup\{0\})\right) &=&\emptyset,
\\
\quad\text{ if }\,
\supp\chi\cap\supp(\psi_{j1}\otimes\psi_{j2})\not=\emptyset.
\nonumber
\end{eqnarray}
\end{lem}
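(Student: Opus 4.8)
The plan is to build the partition of unity on $\Omega_1\times\Omega_2$ by separately triangulating the problem in the two factors, using the product structure of $\Gamma$. First I would recall that $\Gamma^c=\big((\Gamma_1\cup\{\underline 0\})\times(\Gamma_2\cup\{\underline 0\})\big)^c\cup\{(\underline 0,\underline 0)\}$, and analyze the hypothesis $(\supp\chi\times V)\cap\Gamma=\emptyset$ pointwise: for each $z=(x,y)\in\supp\chi$ and each $\zeta=(\xi,\eta)\in V$, we cannot have both $(\xi=0$ or $(x;\xi)\in\Gamma_1)$ and $(\eta=0$ or $(y;\eta)\in\Gamma_2)$. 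So at least one of the two ``bad'' conditions fails, i.e.\ either $\xi\neq 0$ and $(x;\xi)\notin\Gamma_1$, or $\eta\neq 0$ and $(y;\eta)\notin\Gamma_2$. This is the dichotomy that will let me choose, locally near each such point, a product cone $W_1\times W_2$ avoiding $V$ (after adjoining the zero directions) while $W_i^c$ avoids the corresponding $\Gamma_i$ over a neighborhood.

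Next I would carry out the local construction. Fix a point $(z_0;\zeta_0)$ with $z_0\in\supp\chi$, $\zeta_0\in V$, $\zeta_0=(\xi_0,\eta_0)$. By the dichotomy, say $\xi_0\neq 0$ and $(x_0;\xi_0)\notin\Gamma_1$ (the other case is symmetric). Since $\Gamma_1$ is closed and conic, there is an open neighborhood $U_1\ni x_0$ and a closed cone $W_1\ni\xi_0$ in $\bbR^{d_1}\setminus\{0\}$ such that $(U_1\times W_1)\cap\Gamma_1=\emptyset$; equivalently $(\supp\psi_1\times W_1^c)\cap\Gamma_1$ will fail — wait, I need $W_1^c$ small, so instead I take $W_1$ to be a \emph{large} closed cone (a complement of a small conic neighborhood of the $\Gamma_1$-fibre over $U_1$) so that $W_1^c$ is a small cone disjoint from $\Gamma_1$ over $U_1$, and simultaneously arrange, by shrinking, that $\xi_0\in \mathring{W_1}$ stays away from $\partial W_1$. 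For the second factor I simply take $W_2$ to be \emph{any} small closed cone around $\eta_0$ (if $\eta_0\neq 0$) or, if $\eta_0=0$, I take $W_2$ empty/degenerate and rely on $\{0\}$; either way $(W_1\cup\{0\})\times(W_2\cup\{0\})$ can be made disjoint from $V$ near $\zeta_0$ because $\xi_0\notin W_1\cup\{0\}$ is false — rather, the point is $\zeta_0=(\xi_0,\eta_0)$ with $\xi_0\in\mathring W_1$, so I must instead engineer $W_1$ so that $\xi_0\notin W_1$. Let me restate: choose $W_1$ to be a closed cone with $\xi_0\notin W_1\cup\{0\}$ but $W_1^c$ still a neighborhood of $\Gamma_1$'s fibre over $U_1$ — impossible in general. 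The correct move, which I would spell out carefully, is: since $(x_0;\xi_0)\notin\Gamma_1$, pick $W_{1}$ with $\xi_0\in\mathring{W_1}$, $(U_1\times W_1)\cap\Gamma_1=\emptyset$; then automatically $\supp\psi_{1}\times W_1^c$ need \emph{not} avoid $\Gamma_1$, so this case must instead be handled by putting the ``large cone'' on factor $2$. Concretely: in the case $\xi_0\neq 0$, $(x_0;\xi_0)\notin\Gamma_1$, I let $W_{j1}$ be a \emph{small} closed cone around $\xi_0$ disjoint from $\Gamma_1$ over $U_1$ — then $(\supp\psi_{j1}\times W_{j1}^c)\cap\Gamma_1=\emptyset$ \emph{fails}; instead note we want $W_{j1}^c$ to avoid $\Gamma_1$, i.e.\ $W_{j1}\supset$ fibre of $\Gamma_1$, so $W_{j1}$ is large. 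Then $\xi_0$ may or may not be in $W_{j1}$; but we get to choose $W_{j1}$ large yet missing the ray through $\xi_0$, which is possible precisely because $(x_0;\xi_0)\notin\Gamma_1$ over the small set $U_1$. Having done this, $(W_{j1}\cup\{0\})\times(W_{j2}\cup\{0\})$ with $W_{j2}$ taken to be \emph{all} of $\bbR^{d_2}\setminus\{0\}$ (so $W_{j2}^c=\emptyset$, trivially disjoint from $\Gamma_2$) misses $V$ near $\zeta_0$ since $\xi_0\notin W_{j1}\cup\{0\}$.

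Then I would globalize: the set $(\supp\chi\times V)$ is covered by finitely many such local product boxes $U_{j1}\times U_{j2}\times(W_{j1}\times W_{j2})$ — using compactness of $\supp\chi$ and conic compactness (compactness of $V\cap S^{d-1}$) — together with the ``trivial'' region away from $\supp\chi$. Subordinate to the open cover $\{U_{j1}\times U_{j2}\}_j$ (extended to a cover of all of $\Omega_1\times\Omega_2$ by adding $(\Omega_1\times\Omega_2)\setminus\supp\chi$) I take a smooth partition of unity; the only subtlety is to arrange each partition function to be a \emph{product} $\psi_{j1}\otimes\psi_{j2}$, which I handle by first choosing a partition of unity $(\alpha_k)$ on $\Omega_1$ subordinate to $\{U_{k1}\}$ and $(\beta_\ell)$ on $\Omega_2$ subordinate to $\{U_{\ell2}\}$ and then taking the products $\alpha_k\otimes\beta_\ell$, re-indexing, keeping only those pairs whose supports meet $\supp\chi$ and assigning to each the cone pair inherited from the box containing it. The partition is finite on $\supp\chi$ because both $(\alpha_k)$ and $(\beta_\ell)$ are locally finite and $\supp\chi$ is compact.

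\textbf{Main obstacle.} The delicate point is the simultaneous requirement on the two cone pairs: $W_{j1}^c,W_{j2}^c$ must be small enough to avoid $\Gamma_1,\Gamma_2$ over the relevant supports, yet $(W_{j1}\cup\{0\})\times(W_{j2}\cup\{0\})$ must avoid $V$. Because $\Gamma$ is a \emph{product} cone minus the origin, the pointwise dichotomy above guarantees that in each local box we may sacrifice one of the two factors entirely (making $W^c=\emptyset$ there) and do all the work in the other — this is exactly what makes the two conditions compatible, and it is the step I expect to require the most care to write cleanly. Verifying that the resulting finite collection genuinely covers $\supp\chi\times V$ and that the inherited cone assignments satisfy both displayed conditions on every piece meeting $\supp\chi$ is then a routine compactness-and-bookkeeping argument.
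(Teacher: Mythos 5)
Your approach is genuinely different from the paper's. You exploit the product structure of $\Gamma$ through a pointwise dichotomy: at each $(z;\zeta)\in\supp\chi\times V$ one of the two factors is ``safe'' (non-zero covector not in the corresponding $\Gamma_i$), so you sacrifice the other factor locally by taking $W_{j2}=\bbR^{d_2}\setminus\{0\}$ (or vice versa) and concentrate all the conic work in the safe factor. The paper instead works metrically and symmetrically: it shows $d_\infty(\supp\chi\times UV,U\Gamma)\geq 4\delta$, covers by $\delta$-balls, and takes $W_{i1},W_{i2}$ to be the $\delta\|\cdot\|$-thickenings of the fibres of $\Gamma_1$ and $\Gamma_2$ over those balls; the product structure of $\Gamma$ is then used only to observe that the thickening of a product cone is the product of the thickenings, which gives the disjointness from $V$ in one stroke and uniformly in $\zeta\in V$. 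Your route is conceptually crisper (the dichotomy makes it evident why the two conditions can coexist), while the paper's buys a single pair of cones that works for all of $V$ at once without any case distinction.

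That said, the step you dismiss as ``routine compactness-and-bookkeeping'' hides two real subtleties that your sketch does not resolve. First, the lemma demands a \emph{single} pair $(W_{j1},W_{j2})$ per $j$ that avoids all of $V$, not just a conic neighborhood of one $\zeta_0$; your local construction only gives disjointness near $\zeta_0$, so you must intersect the cone pairs arising from the (finitely many) conic sub-pieces of $V$ sitting over a fixed spatial box. Intersecting is fine for the first condition (unions of open cones avoiding $\Gamma_i$ still avoid it), but you need to say this. Second, and more delicately, the case chosen by the dichotomy depends on where you are in $\supp\chi\times (V\cap S^{d-1})$, and near the boundary $\{\xi=0\}$ or $\{\eta=0\}$ a small conic piece around $\zeta_0=(\xi_0,\eta_0)$ with $|\xi_0|$ tiny can contain points with $\xi=0$ or with $\xi/|\xi|$ pointing in any direction; case~1 then cannot be applied uniformly on that piece. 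You need a preliminary refinement of the cover (by a Lebesgue-number argument applied to the two open sets $\tilde O_1=\{\xi\neq 0,\ (x;\xi)\notin\Gamma_1\}$ and $\tilde O_2=\{\eta\neq 0,\ (y;\eta)\notin\Gamma_2\}$ of the compact $\supp\chi\times (V\cap S^{d-1})$) so that each closed piece sits entirely in one $\tilde O_i$; on such a piece $|\xi|/|\zeta|$ is then bounded below, which is what makes the ``small cone around $\xi_0$'' actually contain the first projections of the whole piece. Finally, your device of taking partitions $(\alpha_k)$ on $\Omega_1$ and $(\beta_\ell)$ on $\Omega_2$ separately and then multiplying does not automatically give partition functions supported in a single box of your cover ($\supp\alpha_k\times\supp\beta_\ell$ need not lie in any $U_{j1}\times U_{j2}$ unless the two 1-D covers come from a common product grid); this must be arranged by using a product grid of fixed mesh, as the paper implicitly does with its $\delta$-balls in the $\ell^\infty$ metric. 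None of these points is fatal, but they are the actual content of the lemma and should be written out rather than deferred.
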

\begin{proof} --- We first set some notation.
For any $D\in \mathbb{N}$, with the identification
$T^*\mathbb{R}^D\simeq \mathbb{R}^D\oplus (\mathbb{R}^D)^*$, we denote by
$\underline{\pi}:T^*\mathbb{R}^D\longrightarrow \mathbb{R}^D$
the projection onto the first factor and by
$\overline{\pi}:T^*\mathbb{R}^D\longrightarrow (\mathbb{R}^D)^*$
the projection on the second factor.
We use the distance
$d_\infty$ on $\mathbb{R}^D$ (or $(\mathbb{R}^D)^*$) defined by 
$d_\infty(u,v):= \sup_{1\leq i\leq D}|u^i-v^i|$. For $u\in \mathbb{R}^D$
and $r\geq 0$ we then set $\overline{B}(u,r) = \{v\in \mathbb{R}^D;
d_\infty(u,v)\leq r\}$ and, for any subset $Q\subset \mathbb{R}^D$, 
$Q_{,r}:= \{v\in \mathbb{R}^D;d_\infty(v,Q)\leq r\}$.
We note that, for any
pair of sets $Q_1\subset \mathbb{R}^{d_1}$
and $Q_2\subset \mathbb{R}^{d_2}$, 
$(Q_1\times Q_2)_{,r} = Q_{1,r}\times Q_{2,r}$ (in particular,
if $(x,y)\in \Omega_1\times \Omega_2$,
$\overline{B}((x,y),r) = \overline{B}(x,r) \times \overline{B}(y,r)$).
Lastly for any closed conic subset
$W\subset (\mathbb{R}^D)^*\setminus \{0\}$, 
we set $\overline{W}:= W\cup \{0\}$
for short and $UW:= S^{D-1}\cap W$. Similarly if $\Gamma$ is a conic
subset of $T^*\mathbb{R}^D$, we set $ U\Gamma = (\mathbb{R}^D\times S^{D-1})\cap \Gamma$
and $\overline{\Gamma}=\Gamma\cup \underline{0}\subset T^*\mathbb{R}^D$
where $\underline{0}$ is the zero
section of $T^*\mathbb{R}^D$.

We will prove that there exists a family of open balls
$(B_{j1}\times B_{j2})_{j\in J}$ that covers
$\Omega_1\cap\Omega_2$, which is finite over any compact subset
of $\Omega_1\times \Omega_2$ and in particular on $\supp\chi$
and such that
$(\overline{B_{j1}}\times W_{j1}^c)\cap \Gamma_1 =
(\overline{B_{j2}}\times W_{j2}^c)\cap \Gamma_2 = \emptyset$ and that
$V\cap(\overline{W}_{j1}\times \overline{W}_{j2}) = \emptyset$,
if $\supp\chi\cap(\overline{B}_{j1}\times \overline{B}_{j2}) \neq \emptyset$.
The conclusion of the lemma will then follow 
by constructing a partition of unity 
$(\psi_{j1}\otimes\psi_{j2})_{j\in J}$ 
such that $\supp\psi_{j1} = \overline{B}_{j1}$ and
$\supp\psi_{j2} = \overline{B}_{j2}$, $\forall j\in J$,
by using standard arguments.

\emph{Step 1.} If $(\supp\chi\times V)\cap \Gamma = \emptyset$, then
there exists some $\delta>0$ such that
$d_\infty(\supp\chi\times UV,U\Gamma) \geq 4\delta$. Consider
$K:= (\supp\chi)_{,\delta}$, we then note that
$d_\infty(K\times UV,U\Gamma) \geq 3\delta$. Without loss of generality, we can assume
that $\delta$ has been chosen so that $K\subset \Omega_1\times \Omega_2$.
Obviously $\Omega_1\times \Omega_2$ is covered by
$(B((x,y),\delta))_{(x,y)\in \Omega_1\times \Omega_2}$.
Moreover all balls $B((x,y),\delta)$ are contained in $K$ if
$(x,y)\in \supp\chi$ and $\supp\chi$ is covered by the subfamily
$(B((x,y),\delta))_{(x,y)\in \supp\chi}$. Since $\supp\chi$ is
compact we can thus extract a countable family of balls
$(B_i)_{i\in I} = (B_{i1}\times B_{i2})_{i\in I}$ which covers
$\Omega_1\times \Omega_2$ and which is
finite over $\supp\chi$.

We now set
$\gamma:= \overline{\pi}(\underline{\pi}^{-1}(K)\cap \Gamma)$
and $U\gamma:= \overline{\pi}(\underline{\pi}^{-1}(K)\cap U\Gamma)$
and we estimate the distance of $U\gamma$ to $UV$:
\[
 \begin{array}{ccl}
  d_\infty[U\gamma,UV]
  & = & \displaystyle \inf_{\xi\in \overline{\pi}(\underline{\pi}^{-1}(K)\cap U\Gamma)}
  \quad \inf_{\eta\in UV}d_\infty(\xi,\eta)\\
  &  = & \displaystyle \inf_{(u,\xi)\in U\Gamma;u\in K}
  \quad \inf_{(v,\eta)\in K\times UV}d_\infty(\xi,\eta)\\
  & = & \displaystyle \inf_{(u,\xi)\in U\Gamma;u\in K}
  \quad \inf_{(v,\eta)\in K\times UV}d_\infty((u,\xi),(v,\eta)),
 \end{array}
\]
where the last equality is due to the fact that one can choose $v=u$ in the minimization.
We deduce that, by removing the constraint $u\in K$ in the minimization,
\[
 \begin{array}{ccl}
  d_\infty[U\gamma,UV]
  & \geq & \displaystyle \inf_{(u,\xi)\in U\Gamma}
  \quad \inf_{(v,\eta)\in K\times UV}d_\infty((u,\xi),(v,\eta))\\
  & = & d_\infty(K\times UV,U\Gamma) \geq 3\delta.
 \end{array}
\]
\emph{Step 2.} Since $\gamma$
and $V$ are cones, the previous inequality implies
$d_\infty(\xi,V)\geq 2\|\xi\|\delta$ for every $\xi\in \gamma$.
For any $i\in I$ such that the ball $B_i$ is centered at a point
in $\supp\chi$, the inclusion $\overline{B_i}\subset K$ implies
$\overline{\pi}(\underline{\pi}^{-1}(\overline{B_i})\cap \Gamma)\subset \gamma$.
We hence have also
\begin{equation}\label{distance-securite}
\forall \xi\in \overline{\pi}(\underline{\pi}^{-1}(\overline{B_i})\cap \Gamma)
\quad d_\infty(\xi,V)\geq 2\|\xi\|\delta.
\end{equation}
We now set $\overline{W}_{i1}:=
\{\xi_1\in (\mathbb{R}^{d_1})^*;
d_\infty(\xi_1,\overline{\pi}(\underline{\pi}^{-1}(\overline{B_{i1}})\cap \Gamma_1))
\leq \|\xi_1\|\delta\}$,
$\overline{W}_{i2}:=
\{\xi_2\in (\mathbb{R}^{d_2})^*;
d_\infty(\xi_2,\overline{\pi}(\underline{\pi}^{-1}(\overline{B_{i2}})\cap \Gamma_2))
\leq \|\xi_2\|\delta\}$
and $W_{i1}:= \overline{W}_{i1}\setminus\{0\}$, $W_{i2}:= \overline{W}_{i2}\setminus\{0\}$.
By the definition of $W_{i1}$, $W_{i1}^c\cap \overline{\pi}(\underline{\pi}^{-1}(\overline{B_{i1}})\cap \Gamma_1) = \emptyset$,
which is equivalent to $(\overline{B}_{i1}\times W_{i1}^c)\cap \Gamma_1 = \emptyset$. Similarly
$(\overline{B}_{i2}\times W_{i1}^c)\cap \Gamma_2 = \emptyset$.

On the other hand, since 
\begin{eqnarray*}
\overline{\pi}(\underline{\pi}^{-1}(\overline{B_{i1}})\cap\overline{\Gamma_1})
\times \overline{\pi}(\underline{\pi}^{-1}(\overline{B_{i2}})\cap\overline{\Gamma_2})
&=& \overline{\pi}[\underline{\pi}^{-1}(\overline{B_{i1}}\times \overline{B_{i2}})\cap(\overline{\Gamma_1}\times \overline{\Gamma_2})]\\
&=& \overline{\pi}[\underline{\pi}^{-1}(\overline{B_i})\cap\overline{\Gamma}],\,\ 
\overline{B_i}=\overline{B_{i1}}\times \overline{B_{i2}}
\end{eqnarray*}
because
\[
\{\xi_1;\exists (x_1;\xi_1)\in\overline{\Gamma_1} , x_1\in \overline{B_{i1}} \}
\times
\{\xi_2;\exists (x_2;\xi_2)\in\overline{\Gamma_2} , x_2\in \overline{B_{i2}} \}\]
\[=\{(\xi_1,\xi_2);
\exists (x_1,x_2;\xi_1,\xi_2)\in\overline{\Gamma_1}\times\overline{\Gamma_2},
(x_1,x_2)\in \overline{B_{i1}}\times \overline{B_{i2}}\}
\]
we also have
\begin{eqnarray*}
 \overline{W}_{i1}\times \overline{W}_{i2}& =& \{(\xi_1,\xi_2)\in (\mathbb{R}^d)^*;
 d_\infty[(\xi_1,\xi_2),\overline{\pi}(\underline{\pi}^{-1}(\overline{B_i})\cap\Gamma)]\\
 &\leq &\sup (\|\xi_1\|,\|\xi_2\|) \delta\}.
\end{eqnarray*}
Hence by (\ref{distance-securite}), we deduce that $\overline{W}_{i1}\times \overline{W}_{i2}$ does not meet $V$.
\end{proof}
In the
rest  of the 
paper, we may identify 
abusively
$\bbR^d$ and $(\bbR^d)^*$.
 We also introduce
the notation
$e_\zeta(x,y)=e^{i(\xi.x+\eta.y)}$
where $\zeta=(\xi,\eta)$.
To estimate $||u\otimes v||_{N,V,\varphi_1\otimes\varphi_2}$,
we use Lemma~\ref{lemm0} to find a partition of unity
$(\psi_{j1}\otimes\psi_{j2})_{j\in J}$ which is finite
on $\supp(\varphi_1\otimes\varphi_2)$ to write
\begin{eqnarray*}
\widehat{u\varphi_1}(\xi) 
\widehat{v\varphi_2}(\eta) =
\calF(u\varphi_1\otimes v\varphi_2)(\zeta) &=&
\langle u\otimes v,(\varphi_1\otimes \varphi_2) e_\zeta\rangle
\\=\sum_j 
\langle u\otimes v,(\varphi_1\psi_{j1}\otimes \varphi_2\psi_{j2}) 
e_\zeta\rangle
&=& \sum_j \widehat{u\varphi_1\psi_{j1}}(\xi) \,
 \widehat{v\varphi_2\psi_{j2}}(\eta).
\end{eqnarray*}
Therefore
$||u\otimes v||_{N,V,\varphi_1\otimes\varphi_2}
\leqslant \sum_j ||u\otimes v||_{N,V,\varphi_1\psi_{j1}\otimes
  \varphi_2\psi_{j2}}$, where the sum over $j$ is finite.
Each seminorm on the right hand side is bounded by the
following lemma.
\begin{lem}
Let $\Gamma_1$, $\Gamma_2$ and $\Gamma$ be closed
cones as in the previous lemma, $\psi_1\in \calD(\Omega_1)$
$\psi_2\in \calD(\Omega_2)$ such that
$ (\supp(\psi_1\otimes\psi_2) \times V ) \cap\Gamma=\emptyset$
and closed cones
$W_1$ and $W_2$ in $\bbR^{d_1}\backslash\{0\}$ and
$\bbR^{d_2}\backslash\{0\}$ such that
\begin{eqnarray}
(W_1\cup\{0\})\times (W_2\cup\{0\})\cap V &=& \emptyset,\\
(\supp\psi_k\times W_k^c)\cap\Gamma_k &=& \emptyset,
\text{ for }\, k=1,2.
\end{eqnarray}
Then, for every bounded set $A\subset\calD'_{\Gamma_1}$
and every integer $N$, there are constants
$m$, $M_1$, $M_2$ and a bounded set
$B\subset \calD(K)$, where $K$ is an arbitrary
compact neighborhood of $\supp\psi_2$,  such that
\begin{eqnarray*}
||t_1\otimes t_2||_{N,V,\psi_1\otimes\psi_2} & \le &
  M_1 
   ||t_2||_{N,C_\beta,\psi_2}
+ 
 M_2
   ||t_2||_{N+m,C_\beta,\psi_2} 
+ p_{B}(t_2),
\end{eqnarray*}
for every $t_1\in A$ and $t_2\in \calD'_{\Gamma_2}$,
where $C_\beta$ is an arbitrary conic neighborhood
of $W_2$ with compact base 
and $p_B$ is a seminorm of the strong topology of
$\mathcal{D}^\prime(\bbR^{d_2})$.
\end{lem}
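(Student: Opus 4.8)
\emph{Plan.} The idea is to split the closed cone $V$ into finitely many closed subcones, on each of which \emph{either} $\xi$ stays inside a closed cone on which $\widehat{t_1\psi_1}$ is rapidly decreasing uniformly over $t_1\in A$, \emph{or} $\eta$ stays inside a closed cone on which $\widehat{t_2\psi_2}$ is rapidly decreasing; and, decisively, on a piece of the first kind $|\eta|$ is controlled by $|\xi|$, while on a piece of the second kind $|\xi|$ is controlled by $|\eta|$. On pieces of the second kind Lemma~\ref{Linflem} finishes the job; on pieces of the first kind the $p_B$ term does.

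\emph{Geometric decomposition.} Since $V$, $\overline W_1:=W_1\cup\{0\}$ and $\overline W_2:=W_2\cup\{0\}$ are closed cones with $(\overline W_1\times\overline W_2)\cap V=\emptyset$, I would work on the unit sphere $S^{d-1}$ of $\bbR^d$. Every $p_0=(\xi_0,\eta_0)\in V\cap S^{d-1}$ satisfies $\xi_0\notin\overline W_1$ or $\eta_0\notin\overline W_2$ (both cannot fail, as $(\xi_0,\eta_0)\notin\overline W_1\times\overline W_2$). In the first case $\xi_0\neq 0$, so a sufficiently small closed conic neighbourhood $U_{p_0}$ of the ray $\bbR_{>0}p_0$ has its $\xi$-projection contained in a fixed closed cone $C_1\subset W_1^c$ (hence $(\supp\psi_1\times C_1)\cap\Gamma_1=\emptyset$), while on $U_{p_0}$ the ratio $|\eta|/|\xi|$ stays near $|\eta_0|/|\xi_0|$, so $|\eta|\le\kappa\,|\xi|$ there; call $U_{p_0}$ of type I. Symmetrically, if $\eta_0\notin\overline W_2$ then $U_{p_0}$ may be chosen of type II, with $\eta$-projection in a closed cone $C_2\subset W_2^c$ and $|\xi|\le\kappa\,|\eta|$ on $U_{p_0}$. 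By compactness I extract a finite subcover and pass to conic closures, getting $V=\bigcup_{i\in F}(V\cap\overline U_i)$ with $F$ finite and each $\overline U_i$ of type I or II; let $C_\beta$ be the union of the finitely many cones $C_2$ so produced, a closed cone with $(\supp\psi_2\times C_\beta)\cap\Gamma_2=\emptyset$, and let $K$ be the prescribed compact neighbourhood of $\supp\psi_2$. Since $\Vert t_1\otimes t_2\Vert_{N,V,\psi_1\otimes\psi_2}=\max_{i\in F}\ \sup_{(\xi,\eta)\in V\cap\overline U_i}(1+|\zeta|)^N|\widehat{t_1\psi_1}(\xi)|\,|\widehat{t_2\psi_2}(\eta)|$, it is enough to bound each piece.

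\emph{The two model estimates.} On a type-II piece $\eta\in C_\beta$ and $|\xi|\le\kappa|\eta|$. By Lemma~\ref{Linflem} applied to the bounded set $A$ there are an integer $m$ and a constant $C_A$, depending only on $A$ and $\psi_1$, with $|\widehat{t_1\psi_1}(\xi)|\le C_A(1+|\xi|)^{m}$ for all $t_1\in A$; using $(1+|\zeta|)\le(1+\kappa)(1+|\eta|)$ and $(1+|\xi|)\le(1+\kappa)(1+|\eta|)$ one obtains, for all $t_1\in A$ and $t_2\in\calD'_{\Gamma_2}$,
\[
(1+|\zeta|)^N|\widehat{t_1\psi_1}(\xi)|\,|\widehat{t_2\psi_2}(\eta)|\ \le\ C_A'\,(1+|\eta|)^{N+m}|\widehat{t_2\psi_2}(\eta)|\ \le\ C_A'\,\Vert t_2\Vert_{N+m,C_\beta,\psi_2},
\]
which is absorbed into the terms $M_1\Vert t_2\Vert_{N,C_\beta,\psi_2}+M_2\Vert t_2\Vert_{N+m,C_\beta,\psi_2}$ (with $m=0$ on those pieces where $\widehat{t_1\psi_1}$ is merely bounded without loss of decay). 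On a type-I piece $\xi\in C_1\subset W_1^c$ and $|\eta|\le\kappa|\xi|$; I would write
\[
(1+|\zeta|)^N\,\widehat{t_1\psi_1}(\xi)\,\widehat{t_2\psi_2}(\eta)\ =\ \big\langle\, t_2,\ (1+|\zeta|)^N\,\widehat{t_1\psi_1}(\xi)\,\psi_2\,e^{i\eta\cdot x}\,\big\rangle
\]
and check that the family of test functions $x\mapsto(1+|\zeta|)^N\widehat{t_1\psi_1}(\xi)\psi_2(x)e^{i\eta\cdot x}$, indexed by $(\xi,\eta)$ in the piece and by $t_1\in A$, is bounded in $\calD(K)$: each $x$-derivative of order $\le\ell$ produces a factor bounded by a constant times $(\kappa|\xi|)^{\ell}(1+|\eta|)^{N}$, and since $\xi\in C_1$ the remaining factor $(1+|\xi|)^{2N+\ell}|\widehat{t_1\psi_1}(\xi)|\le\sup_{t_1\in A}\Vert t_1\Vert_{2N+\ell,C_1,\psi_1}<\infty$ because $A$ is bounded for the normal topology; hence every $\calD(K)$-seminorm is uniformly bounded on this family. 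Therefore each type-I piece is bounded by $p_B(t_2)$, $B$ being the union of these (bounded) families. Summing the finitely many contributions and renaming the constants gives the asserted inequality.

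\emph{Main obstacle.} The hard point is the geometric decomposition above: the single hypothesis $(\overline W_1\times\overline W_2)\cap V=\emptyset$ must be made to yield, on each piece, not merely that one of the two variables stays in a ``good'' closed cone, but also that the other variable is \emph{subordinate to it in size} — this is exactly what lets the oscillation of $e^{i\eta\cdot x}$ be absorbed by the super-polynomial decay of $\widehat{t_1\psi_1}$ in the type-I estimate, and what makes Lemma~\ref{Linflem} sufficient in the type-II estimate. A crude splitting of $V$ into $V\cap(C_1\times\bbR^{d_2})$ and $V\cap(\bbR^{d_1}\times C_2)$ gives no such size control, which is why one is forced to compactify on the sphere and argue through conic neighbourhoods of individual rays.
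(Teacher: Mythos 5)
Your proof is correct, and it reaches the same conclusion by a technically different route. The paper works with two smooth homogeneous cutoffs $\alpha$ on $\bbR^{d_1}\setminus\{0\}$ and $\beta$ on $\bbR^{d_2}\setminus\{0\}$ satisfying $\alpha|_{W_1\cup\{0\}}=1$, $\beta|_{W_2\cup\{0\}}=1$, $(\supp\alpha\times\supp\beta)\cap V=\emptyset$, $(\supp\psi_k\times\supp(1-\alpha_k))\cap\Gamma_k=\emptyset$, and splits the Fourier-side integrand into four pieces $I=I_1+I_2+I_3+I_4$ with $I_1\equiv 0$ on $V$, $I_2$ and $I_4$ bounded by $\|t_2\|_{N+m,C_\beta,\psi_2}$ and $\|t_2\|_{N,C_\beta,\psi_2}$ respectively (after Lemma~\ref{Linflem} and the same comparability $|\eta|\gtrsim|\zeta|$ on the relevant cone), and $I_3$ bounded by $p_B(t_2)$ via a bounded family of test functions in $\calD(K)$ --- the crucial comparability estimates are extracted by precisely the compactness-on-the-sphere argument you perform. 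You instead decompose $V$ itself into finitely many closed conic pieces of type~I or~II by compactness of $V\cap S^{d-1}$, and then run the two model estimates (your type~II is the paper's $I_2/I_4$, your type~I is the paper's $I_3$). The two approaches are morally equivalent: both live or die on the geometric observation that $(\overline W_1\times\overline W_2)\cap V=\emptyset$ forces one of the two frequency variables into a ``good'' cone with the other subordinate in size, which you have correctly identified as the main obstacle. Your route is more elementary (no smooth bump functions), at the cost of dealing with an explicit finite conic cover and the resulting sum of terms; the paper's route is closer to standard microlocal calculus and avoids bookkeeping of the pieces, at the cost of setting up the four-fold partition of unity. Your type-I test-function estimate carries an exponent $2N+\ell$ where $N+\ell$ would suffice; this is harmless but could be tightened by applying $|\eta|\le\kappa|\xi|$ only once after writing $(1+|\zeta|)\le(1+\kappa)(1+|\xi|)$. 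Both you and the paper in effect read the statement's ``compact neighborhood of $W_2$'' as a closed conic neighborhood, which is what the seminorm $\|\cdot\|_{N,C_\beta,\psi_2}$ actually requires.
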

\begin{proof}
We want to calculate
\begin{eqnarray*}
||t_1\otimes t_2||_{N,V,\psi_1\otimes\psi_2}
= \sup_{\zeta\in V}  
    (1+|\zeta|)^N
    |\calF(t_1\psi_1\otimes t_2\psi_2)(\zeta)|.
\end{eqnarray*}    
We denote $u=t_1\psi_1$, $v=t_2\psi_2$
and $I=\widehat{u\otimes v}$. From $e_{(\xi,\eta)}=e_\xi\otimes e_\eta$
we find that
$I(\xi,\eta)=\langle t,e_{(\xi,\eta)}\rangle
=\langle u\otimes v, e_\xi\otimes e_\eta\rangle
=\langle u, e_\xi\rangle\langle v, e_\eta\rangle
=\widehat{u}(\xi)\widehat{v}(\eta)$.
By the shrinking lemma we can slightly enlarge
$W_1$ and $W_2$ to closed cones having the same properties.
Thus, there are two homogeneous functions of degree zero
$\alpha$ and $\beta$ on $\bbR^{d_1}$ and $\bbR^{d_2}$,
respectively, which are smooth except at the origin, 
non-negative and bounded by 1, such that:
(i) $\alpha|_{W_1\cup\{0\}}=1$ and $\beta|_{W_2\cup\{0\}}=1$;
(ii) $(\supp\alpha\times\supp\beta)\cap V=\emptyset$;
(iii) $(\supp\psi_1\times \supp(1-\alpha))\cap \Gamma_1=\emptyset$;
(iv) $(\supp\psi_2\times \supp(1-\beta))\cap \Gamma_2=\emptyset$.
We can write $I=I_1+I_2+I_3+I_4$ where (recalling that 
$\zeta=(\xi,\eta)$)
\begin{eqnarray*}
I_1(\zeta) &=&
\alpha(\xi)\widehat{u}(\xi)\beta(\eta)\widehat{v}(\eta),\\
I_2(\zeta) &=&
\alpha(\xi)\widehat{u}(\xi)(1-\beta)(\eta)\widehat{v}(\eta),\\
I_3(\zeta) &=&
(1-\alpha)(\xi)\widehat{u}(\xi)\beta(\eta)\widehat{v}(\eta),\\
I_4(\zeta) &=&
(1-\alpha)(\xi)\widehat{u}(\xi)(1-\beta)(\eta)\widehat{v}(\eta).
\end{eqnarray*}
The term $I_1(\zeta)=0$ because, by condition (ii)
$\alpha(\xi)\beta(\eta)=0$ for $(\xi,\eta)\in V$.
Condition (iii)  implies that
\begin{eqnarray*}
|(1-\alpha)(\xi)\widehat{u}(\xi)| 
  &\le & \sup_{\xi\in C_\alpha} 
   |\widehat{t_1\psi_1}(\xi)| 
  \le (1+|\xi|)^{-N} ||t_1||_{N,C_\alpha,\psi_1},
\end{eqnarray*}
where $\xi\in C_\alpha=\supp(1-\alpha)$.
This gives us, with $C_\beta=\supp(1-\beta)$,
\begin{eqnarray*}
|I_4(\zeta)| & \le & 
(1+|\xi|)^{-N}(1+|\eta|)^{-N} 
   ||t_1||_{N,C_\alpha,\psi_1}
   ||t_2||_{N,C_\beta,\psi_2}
\\&\le &
(1+|\zeta|)^{-N}
   ||t_1||_{N,C_\alpha,\psi_1}
   ||t_2||_{N,C_\beta,\psi_2},
\end{eqnarray*}
because 
$1+|(\xi,\eta)|\le 1 + |\xi|+ |\eta|\le 
(1+|\xi|)(1+|\eta|)$.
Since the set $A$ is bounded in $\calD'_{\Gamma_1}$
there is a constant $M_1=\underset{t_1\in A}{\sup}\Vert t_1\Vert_{N,C_\alpha,\psi_1}$ such that
$|I_4(\zeta)| \le (1+|\zeta|)^{-N} M_1 ||t_2||_{N,C_\beta,\psi_2}$.

To estimate $I_2$, we use the fact that, $u=t_1\psi_1$ being a compactly
supported distribution there is an integer $m$ such that,
for all $t_1\in A$,
\begin{eqnarray*}
|\alpha(\xi) \widehat{u}(\xi)| \le
|\widehat{u}(\xi)| \le
(1+|\xi|)^m ||\theta^{-m} \widehat{u}||_{L^\infty}.
\end{eqnarray*}
As for the estimate of $I_4$, we get
$|(1-\beta)(\eta)\widehat{v}(\eta)| 
  \le (1+|\eta|)^{-N-m} ||t_2||_{N+m,C_\beta,\psi_2}.$
The set 
$\overline{\{\zeta\in \supp\alpha\times
C_\beta\telque |\zeta|=1\}\cap V}$
is \textbf{compact} and
avoids the set of all
elements of the
form $\zeta=(\xi,0),
\xi\in\text{supp }\alpha\setminus\{0\}$. 
Otherwise, we would find some
sequence $(\xi_n,\eta_n)\rightarrow (\xi,0)
\in  ((\text{supp }\alpha\times \{0\})\cap V)\subset
((\text{supp }\alpha\times \text{supp }\beta)\cap V)$
which contradicts
the condition (ii).
Let $\epsilon>0$ be 
the smallest value 
of $|\eta|$ in this set.
Then, the functions 
$\alpha$ and $\beta$ being
homogeneous of degree zero, 
$\supp\alpha\times C_\beta\cap V$ is a cone
in $\bbR^d$ and 
$|\eta|/|\zeta|  \ge \epsilon$ for all
$\zeta=(\xi,\eta)$ in the set
$\supp\alpha\times C_\beta\cap V$.
Thus, $(1+|\eta|)^{-N-m} \le \epsilon^{-N-m}
 (1+|(\xi,\eta)|)^{-N-m}$ and 
$|I_2(\zeta)| \le  ||\theta^{-m} \widehat{t_1\psi_1}||_{L^\infty}
   ||t_2||_{N+m,C_\beta,\psi_2} \epsilon^{-N-m}
(1+|\zeta|)^{-N}$,
for every $\zeta\in V$,
because $|\xi|\le|(\eta,\xi)|$.
We now prove an intermediate lemma:
\begin{lem}
\label{Linflem}
Let $\Omega$ be an open set of $\bbR^d$
and $B$ a bounded set in $\mathcal{D}'(\Omega)$, then for every
$\chi\in \mathcal{D}(\Omega)$ there exist an integer $M$ 
and a constant $C$ (both depending only on $B$ and on
an arbitrary relatively compact open neighborhood of $\supp\chi$)
such that
\begin{eqnarray*}
\sup_{u\in B}\sup_{\xi\in
\bbR^n}(1+|\xi|)^{-M}|\widehat{u\chi}(\xi)| < 
 2^M C\, \mathrm{Vol}(K)\, \pi_{M,K}(\chi),
\end{eqnarray*}
where $\pi_{m,K}(\chi)=\underset{x\in K,\vert\alpha\vert\leqslant M}{\sup}\vert\partial^\alpha\chi(x)\vert$ and for $K=\supp\chi$.
\end{lem}
\begin{proof}
Let $\Omega_0$ be a relatively compact open
neighborhood of $K=\supp \chi$.
According to Schwartz~\cite[p.~86]{Schwartz-66},
for any bounded set $B$ in $\calD'(\Omega)$,
there is an integer $M$ (depending only on $B$
and $\Omega_0$) such that every $u\in B$
can be expressed in $\Omega_0$ as $u=\partial^\alpha f_u$
for $|\alpha|\le M$, where $f_u$ is a continuous function.
Moreover, there is a constant $C$ (depending only on
$B$ and $\Omega_0$) such that $|f_u(x)|\le C$ for all $x\in \Omega_0$
and $u\in B$.
Thus,
\begin{eqnarray*}
\widehat{u\chi}(\xi) &=&
\int_{\Omega_0} e^{ -i\xi\cdot x} \chi(x) \partial^\alpha f_u(x) dx
= (-1)^{|\alpha|}
\int_{\Omega_0} f_u(x) \partial^{\alpha}
    \big(e^{ -i\xi\cdot x} \chi(x)\big) dx
\\&=&
 (-1)^{|\alpha|}
\sum_{\beta\le\alpha} \binom{\alpha}{\beta}
  (i\xi)^\beta
  \int_{\Omega_0} f_u(x) e^{ -i\xi\cdot x}
   \partial^{\beta-\alpha}\chi(x) dx.
\end{eqnarray*}
By using $|(i\xi)^\beta|\le (1+|k|)^M$ if $|\beta|\le M$ we obtain
\begin{eqnarray*}
(1+|\xi|)^{-M}|\widehat{u\chi}(\xi) |&\le &
\sup_{|\alpha|\le M}
\sum_{\beta\le\alpha} \binom{\alpha}{\beta}
  \Big|\int_{\Omega_0} f_u(x) e^{ -i\xi\cdot x}
\partial^{\beta-\alpha}\chi(x)
dx\Big|
\\ &\le &
 2^M C\, \mathrm{Vol}(K)\, \pi_{M,K}(\chi).
\end{eqnarray*}
\end{proof}
By lemma~\ref{Linflem}, 
$||\theta^{-m} \widehat{t_1\psi_1}||_{L^\infty}$
is uniformly bounded for 
$t_1\in A$ by a constant 
$M'_2=\underset{t_1\in A}{\sup}\Vert\theta^{-m}\widehat{t_1\psi_1}\Vert_{L^\infty},
\theta=1+\vert\xi\vert$.
Therefore, there is a constant 
$M_2=M'_2 \epsilon^{-N-m}$ such that,
for every $t_1\in A$ and 
every $t_2\in \calD'_{\Gamma_2}$,
$|I_2(\zeta)| \le M_2 
||t_2||_{N+m,C_\beta,\psi_2}$.

The term $I_3$ is treated differently because we want to
get the following result:
for every bounded set $A$ in $\calD'_{\Gamma_1}$ 
and every seminorm $||\cdot||_{N,V,\chi}$,
there is a bounded set $B\in \calD(\Omega_2)$
such that for all
$\zeta\in V, I_3(\zeta) 
  \le p_B(t_2) (1+\vert\zeta\vert)^{-N}$ for every 
$t_2\in \calD'_{\Gamma_2}$.
This special form of eq.~\eqref{hypogenA}
is possible because the
union of bounded sets is a bounded set and
the multiplication of a bounded set by
a positive constant $M$ is a bounded set.

We write
$I_3(\zeta)=\langle t_2, f_{\zeta}\rangle$,
where 
$f_{(\xi,\eta)}(y)=(1-\alpha)(\xi)\widehat{u}(\xi)
\beta(\eta) \psi_2(y) e_\eta(y)$ and
we must show that the set
$B=\{(1+|\zeta|)^N f_{\zeta}\telque \zeta\in V\}$ is
a bounded set of $\calD(\Omega_2)$.
A subset $B$ of $\calD(\Omega_2)$ is bounded if and only if
there is a compact set $K$ and
a constant $M_n$ for every integer $n$ such that
$\supp f\subset K$ and
$\pi_{m,K}(f)\le M_n$ for every $f\in B$.
All $f_\zeta$ are supported on 
$\supp\psi_2$ and are smooth functions
because $\psi_2$ and $e_\eta$ are smooth.
We have to prove that, if $t_1$ runs over a bounded
set of $\calD'_{\Gamma_1}$, then there are
constants $M_n$ such that
$\pi_{n,K}(f_\zeta)\le M_n$ for all
$\zeta\in V$, where $K$ is a compact
neighborhood of $\supp\psi_2$.
We start from
\begin{eqnarray*}
\pi_{n,K}(f_{(\xi,\eta)})  &=& 
\big|(1-\alpha)(\xi)\widehat{u}(\xi)
\beta(\eta)\big| \pi_{n,K}(\psi_2 e_\eta).
\end{eqnarray*}
We notice that
$\pi_{n,K}(\psi_2 e_\eta) \le 2^n\pi_{n,K}(\psi_2)
\pi_{n,K}(e_\eta)$ and that
$\pi_{n,K}(e_\eta) \le |\eta|^n$.
As for the estimate of $I_2$, we have
$|(1-\alpha)(\xi)\widehat{u}(\xi)|\le (1+|\xi|)^{-N-n}
||t_1||_{N+n,C_\alpha,\psi_1}$ because
$(\supp\varphi_1\times \supp(1-\alpha)) \cap \Gamma_1=\emptyset$
and $(1+|\xi|)^{-N-n}\le \epsilon^{-N-n}
(1+|(\xi,\eta)|)^{-N-n}$
for
some $\varepsilon$
because $(\xi,\eta)\in (V\cap \supp(1-\alpha)\times\supp\beta)$.
Therefore
\begin{eqnarray*}
\pi_{n,K}(f_\zeta) & \le &
  ||t_1||_{N+n,C_\alpha,\psi_1} 2^n \pi_{n,K}(\psi_2)
 \epsilon^{-N-n} (1+|\zeta|)^{-N},
\end{eqnarray*}
because $|\eta|^n(1+|(\xi,\eta)|)^{-n}\le 1$.
If $t_1$ belongs to a bounded set $A$ of
$\calD'_{\Gamma_1}$, then for each $N$
$||t_1||_{N,C_\alpha,\psi_1}$ is uniformly
bounded.
The estimate of $I_3$ is finally
\begin{eqnarray*}
|I_3(\zeta)| & \le & p_{B}(t_2) (1+|\zeta|)^{ -N}.
\end{eqnarray*}
\end{proof}
For each $j$, the conditions of the lemma hold if we
put $\psi_1=\varphi_1\psi_{j1}$,
$\psi_2=\varphi_2\psi_{j2}$, 
$W_1=W_{j1}$ and $W_2=W_{j2}$. 
Thus,  for every bounded set $A$ in $\calD'_{\Gamma_1}$,
every $u\in A$ and every $v\in \calD'_{\Gamma_2}$ we have
\begin{eqnarray*}
||u\otimes v||_{N,V,\varphi_1\otimes\varphi_2} & \le &
\sum_j
||u\otimes v||_{N,V,\varphi_1\psi_{j1}\otimes
  \varphi_2\psi_{j2}} \\& \le & \sum_j
  M_{1j} 
   ||v||_{N,C_{\beta_j},\varphi_2\psi_{j2}}
+ 
 M_2
   ||v||_{N+m,C_{\beta_j},\varphi_2\psi_{j2}} 
+ p_{B_j}(v).
\end{eqnarray*}
Since the sum over $j$ is finite, this means that
the family of maps $u\times v \mapsto u\otimes v$,
where $u\in A$, is equicontinuous for any
bounded set $A\subset\calD'_{\Gamma_1}$.
Because of the symmetry of the problem, we can prove 
similarly that
the family of maps $u\times v \mapsto u\otimes v$,
where $v\in B$, is equicontinuous for any
bounded set $B\subset\calD'_{\Gamma_2}$.
Finally, we have proved

\begin{thm}\label{hypotens}
Let $\Omega_1\subset \bbR^{d_1}$,
$\Omega_2\subset \bbR^{d_2}$ be open sets,
$\Gamma_1\in \dotT^*\Omega_1$,
$\Gamma_2\in \dotT^*\Omega_2$ be closed cones and 
\begin{eqnarray*}
\Gamma &= & 
\big(\Gamma_1 \times \Gamma_2\big) \cup 
\big((\Omega_1\times\{0\})\times \Gamma_2\big)
\cup \big(\Gamma_1\times (\Omega_2\times\{0\})\big).
\end{eqnarray*}
Then, the tensor product 
$(u,v)\mapsto u\otimes v$
is hypocontinuous from
$\calD'_{\Gamma_1}\times\calD'_{\Gamma_2}$ to $\calD'_\Gamma$,
in the normal topology.
\end{thm}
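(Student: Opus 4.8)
The plan is to establish hypocontinuity of the tensor product $(u,v)\mapsto u\otimes v$ from $\calD'_{\Gamma_1}\times\calD'_{\Gamma_2}$ to $\calD'_\Gamma$ for the normal topology by verifying the two symmetric conditions of Definition~\ref{hypodef} directly at the level of seminorms, as formulated in~\eqref{hypogenA}--\eqref{hypogenB}. By the Lemma identifying a fundamental system of seminorms for the normal topology of $\calD'_\Gamma$, it suffices to control two families of seminorms of $u\otimes v$: the seminorms $p_B$ of the strong topology of $\calD'(\Omega_1\times\Omega_2)$, and the seminorms $\Vert\cdot\Vert_{N,V,\varphi_1\otimes\varphi_2}$. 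For the strong seminorms, the classical result of Schwartz that the tensor product $\calD'(\Omega_1)\times\calD'(\Omega_2)\to\calD'(\Omega_1\times\Omega_2)$ is hypocontinuous~\cite[p.~110]{Schwartz-66} handles the estimate immediately, so the entire content lies in the $\Vert\cdot\Vert_{N,V,\varphi_1\otimes\varphi_2}$ seminorms.

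First I would fix a bounded set $A\subset\calD'_{\Gamma_1}(\Omega_1)$ and a seminorm $\Vert\cdot\Vert_{N,V,\varphi_1\otimes\varphi_2}$ with $(\supp(\varphi_1\otimes\varphi_2)\times V)\cap\Gamma=\emptyset$, and apply Lemma~\ref{lemm0} to produce a partition of unity $(\psi_{j1}\otimes\psi_{j2})_{j\in J}$ finite on $\supp(\varphi_1\otimes\varphi_2)$, together with the associated cones $W_{j1}\times W_{j2}$. Then, using $e_\zeta = e_\xi\otimes e_\eta$ and the identity $\widehat{u\varphi_1}(\xi)\,\widehat{v\varphi_2}(\eta) = \sum_j \widehat{u\varphi_1\psi_{j1}}(\xi)\,\widehat{v\varphi_2\psi_{j2}}(\eta)$, one splits the seminorm into a finite sum over $j$, so that
\begin{eqnarray*}
\Vert u\otimes v\Vert_{N,V,\varphi_1\otimes\varphi_2}
\;\leqslant\; \sum_{j\in J}\Vert u\otimes v\Vert_{N,V,\varphi_1\psi_{j1}\otimes\varphi_2\psi_{j2}}.
\end{eqnarray*}
Each summand is then bounded by the preceding Lemma (with $\psi_1=\varphi_1\psi_{j1}$, $\psi_2=\varphi_2\psi_{j2}$, $W_1=W_{j1}$, $W_2=W_{j2}$), giving a bound of the form $M_{1j}\Vert v\Vert_{N,C_{\beta_j},\varphi_2\psi_{j2}} + M_{2j}\Vert v\Vert_{N+m_j,C_{\beta_j},\varphi_2\psi_{j2}} + p_{B_j}(v)$ uniformly in $u\in A$. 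Summing the finitely many $j$, and absorbing the finitely many strong seminorms $p_{B_j}$ into a single $p_{\bigcup_j B_j}$ (a bounded set, since a finite union of bounded sets of $\calD(\Omega_2)$ is bounded), produces an estimate of exactly the shape required by~\eqref{hypogenA}. This yields equicontinuity of $\{v\mapsto u\otimes v : u\in A\}$. By the evident symmetry between the two factors, fixing a bounded set $B\subset\calD'_{\Gamma_2}(\Omega_2)$ instead gives equicontinuity of $\{u\mapsto u\otimes v : v\in B\}$, which is condition~\eqref{hypogenB}. Having verified both conditions, hypocontinuity follows.

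The main obstacle is entirely contained in the Lemma bounding a single $\Vert t_1\otimes t_2\Vert_{N,V,\psi_1\otimes\psi_2}$ — that is, the microlocal splitting $I = I_1+I_2+I_3+I_4$ according to the cutoffs $\alpha,\beta$ and their complements, which has already been carried out in the excerpt. The delicate points there are: the vanishing of $I_1$ by the disjointness condition (ii); the compactness argument yielding the constant $\epsilon>0$ controlling $|\eta|/|\zeta|$ on the cone $\supp\alpha\times C_\beta\cap V$, which is what allows the loss of $m$ derivatives in $I_2$ to be converted into the correct power of $(1+|\zeta|)$; and the treatment of $I_3$, where one must exhibit $B=\{(1+|\zeta|)^N f_\zeta : \zeta\in V\}$ as a \emph{bounded} subset of $\calD(\Omega_2)$, uniformly over $t_1\in A$, using Lemma~\ref{Linflem} to bound $\Vert\theta^{-m}\widehat{t_1\psi_1}\Vert_{L^\infty}$ uniformly on the bounded set $A$. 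Once that Lemma is in hand, the assembly into Theorem~\ref{hypotens} is the routine finite-sum bookkeeping sketched above, and the symmetry of the tensor product makes the second half of the hypocontinuity verification automatic.
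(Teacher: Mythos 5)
Your proposal reproduces the paper's own argument: the same fundamental system of seminorms, the same reduction to Schwartz's hypocontinuity for the strong seminorms, the same use of Lemma~\ref{lemm0} to split $\Vert u\otimes v\Vert_{N,V,\varphi_1\otimes\varphi_2}$ into a finite sum over the partition of unity, the same per-term estimate via the $I_1+I_2+I_3+I_4$ decomposition and Lemma~\ref{Linflem}, and the same symmetry argument to close the second half of the hypocontinuity condition. The small bookkeeping you add (combining the $p_{B_j}$ into a single $p_{\bigcup_j B_j}$) is implicit in the paper and correct.
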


\section{The pull-back}
\label{pullbacksect}
The purpose of this section is to prove
\begin{prop}\label{pullbackprop}
Let $\Omega_1\subset \mathbb{R}^{d_1}$ and $\Omega_2\subset
\mathbb{R}^{d_2}$
be two open sets and 
$\Gamma$ a closed cone in $\dotT^*\Omega_2$. 
Let $f:\Omega_1 \to \Omega_2$ be a smooth map such that
$N_f\cap \Gamma = \emptyset$, where
$N_f =  \{(f(x); \eta)\in \Omega_2\times \mathbb{R}^n\telque
  \eta\circ df_x =0\}$ and 
$f^*\Gamma = \{(x; \eta\circ df_x)\telque (f(x);\eta)\in \Gamma\}$,
where
\begin{eqnarray*}
 \eta\circ df_x &:=&  
\sum_{j=1}^{d_2} \eta_jd(y^j \circ f)_x  = 
\sum_{j=1}^{d_2} \eta_jdy^j \circ df_x\\ 
&=& 
 \sum_{j=1}^{d_2}  \eta_jdf^j_x 
 =\sum_{j=1}^{d_2} \sum_{i=1}^{d_1}  
   \eta_j \frac{\partial f^j}{\partial x^i} dx^i.
\end{eqnarray*}
Then, the pull-back operation
$f^*: \calD'_\Gamma(\Omega_2)\to \calD'_{f^*\Gamma}(\Omega_1)$
is continuous for the normal topology.
\end{prop}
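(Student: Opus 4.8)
The plan is to factor the pull--back through a tensor product and a pull--back by an embedding: the tensor product is controlled by Theorem~\ref{hypotens}, and the pull--back by an embedding reduces, after a local straightening, to the restriction of a distribution to a coordinate subspace. Concretely, I would write $f=\pi\circ\iota$, where $\iota\colon\Omega_1\to\Omega_1\times\Omega_2$, $\iota(x)=(x,f(x))$, is the graph embedding and $\pi\colon\Omega_1\times\Omega_2\to\Omega_2$ the second projection, so that $f^{*}=\iota^{*}\circ\pi^{*}$ and $\pi^{*}u=1_{\Omega_1}\otimes u$. Since $1_{\Omega_1}\in C^{\infty}(\Omega_1)=\calD'_{\emptyset}(\Omega_1)$, Theorem~\ref{hypotens} applied with $\Gamma_1=\emptyset$ makes the tensor product hypocontinuous from $\calD'_{\emptyset}(\Omega_1)\times\calD'_{\Gamma}(\Omega_2)$ to $\calD'_{\Gamma'}(\Omega_1\times\Omega_2)$, where $\Gamma'=\{((x,y);(0,\eta))\telque x\in\Omega_1,\ (y;\eta)\in\Gamma\}$; restricting the first argument to the bounded singleton $\{1_{\Omega_1}\}$ and using that a hypocontinuous map is continuous on $A\times F$ for $A$ bounded (as recorded just after Definition~\ref{hypodef}), the map $u\mapsto 1_{\Omega_1}\otimes u$ is continuous from $\calD'_{\Gamma}(\Omega_2)$ to $\calD'_{\Gamma'}(\Omega_1\times\Omega_2)$ for the normal topology. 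It therefore suffices to prove that $\iota^{*}$ is continuous.

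The set of normals $N_\iota$ is the conormal of the graph, $N_\iota=\{(x,f(x);-\eta\circ df_x,\eta)\telque x\in\Omega_1,\ \eta\in\bbR^{d_2}\}$, which is invariant under $\zeta\mapsto-\zeta$; a point of $N_\iota\cap\Gamma'$ would satisfy $\eta\circ df_x=0$ and $(f(x);\eta)\in\Gamma$, i.e.\ it would lie in $N_f\cap\Gamma$, so the hypothesis $N_f\cap\Gamma=\emptyset$ gives $N_\iota\cap\Gamma'=\emptyset$, hence $\iota^{*}$ is well defined on $\calD'_{\Gamma'}$ and a direct computation gives $\iota^{*}\Gamma'=f^{*}\Gamma$, which lies in $\dotT^{*}\Omega_1$ by Lemma~\ref{lempullbackconicset}. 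Continuity for the normal topology is tested on the two families $p_B$ and $\Vert\cdot\Vert_{N,V,\chi}$, built from compactly supported data, so a partition of unity on $\Omega_1$ reduces the bound of each such seminorm of the target to the situation where $\iota$ is replaced by its straightening: near any point a diffeomorphism of a neighbourhood of $\iota(x_0)$ in $\Omega_1\times\Omega_2$ carries $\iota$ onto the linear inclusion $\kappa\colon y\mapsto(y,0)\in\bbR^{d_1}\times\bbR^{d_2}$, and pull--back by a diffeomorphism is, by an elementary change of variables, an isomorphism between the relevant spaces $\calD'_{\Gamma}$. We are thus reduced to estimating the restriction $\kappa^{*}w$ of a distribution $w\in\calD'_{\Gamma''}(\bbR^{d_1}\times\bbR^{d_2})$, where $\Gamma''$ is a closed cone disjoint from the conormal $N_\kappa=\{((z',0);(0,\xi''))\}$ of $\{z''=0\}$.

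For the model restriction, pick $\widetilde\chi\equiv1$ near $0$ with small support; then $\kappa^{*}w=\kappa^{*}\big((\chi\otimes\widetilde\chi)w\big)$ near $\supp\chi$, and $\widehat{\chi\,\kappa^{*}w}(\xi')=(2\pi)^{-d_2}\int_{\bbR^{d_2}}\widehat{(\chi\otimes\widetilde\chi)w}(\xi',\xi'')\,d\xi''$, the integral converging absolutely since $\Gamma''\cap N_\kappa=\emptyset$ forces rapid decay of the integrand in $\xi''$ when $\xi'$ is bounded. To bound a target seminorm $\Vert\kappa^{*}w\Vert_{N,V,\chi}$, with $(\supp\chi\times V)\cap\kappa^{*}\Gamma''=\emptyset$, split the $\xi''$--integral at $|\xi''|=\rho|\xi'|$: the outer part lies in the cone $\{|\xi''|\ge\rho|\xi'|\}$, which avoids $\Gamma''$ over $\supp(\chi\otimes\widetilde\chi)$ for $\rho$ large (by transversality to $N_\kappa$), and, since in $\Vert\cdot\Vert_{N,V,\chi}$ one has $\xi'\in V$, the inner part lies in the cone $\{\xi'\in V,\ |\xi''|\le\rho|\xi'|\}$, which \emph{also} avoids $\Gamma''$ over $\supp(\chi\otimes\widetilde\chi)$ once $\supp\widetilde\chi$ is small enough --- because $\Gamma''$ is closed and misses $N_\kappa$, so its $\xi'$--directions over a small neighbourhood of $\{z''=0\}$ remain near those of $\kappa^{*}\Gamma''$, hence away from $V$. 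On both cones $\widehat{(\chi\otimes\widetilde\chi)w}$ decays rapidly, so each part is bounded by a single Hörmander seminorm $\Vert w\Vert_{N',V',\chi\otimes\widetilde\chi}$ of $w$ with $N'$ large enough to absorb the $\xi''$--integration. The strong seminorms $p_B(\kappa^{*}w)$ are handled separately through the adjoint identity $\langle\kappa^{*}w,g\rangle=\langle w,\kappa_{*}g\rangle$: by the boundedness of the push--forward established in the first part, $\kappa_{*}$ sends a bounded set $B\subset\calD(\bbR^{d_1})$ to a bounded set of distributions with wave front set contained in $N_\kappa$, and since $\Gamma''\cap N_\kappa=\emptyset$ the pairing of $\calD'_{\Gamma''}$ against such a bounded set is a continuous seminorm for the normal topology, by the functional properties of $\calD'_{\Gamma}$~\cite{Dabrouder-13}. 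Assembling the finitely many contributions from the straightening charts completes the estimate.

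The one genuinely delicate point is this geometric bookkeeping: one must choose $\widetilde\chi$ and the opening $\rho$ of the cones so that \emph{every} piece of the split integral is dominated by \emph{one} of the defining seminorms of $\calD'_{\Gamma''}$, and in particular so that the ``comparable frequency'' region lands in a cone missing $\Gamma''$ over the relevant support --- which is exactly where both features of the geometry enter, namely that $\Gamma''$ projects onto a set avoiding $V$ and that $\Gamma''$ is transverse to $N_\kappa$. The rest --- the reduction to the tensor product, the reduction to a coordinate inclusion, the change of variables for diffeomorphisms, and the partition of unity --- is formal.
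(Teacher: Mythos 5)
Your proof is correct in outline, but it takes a genuinely different route from the paper's. The paper never factors $f$ through its graph: it writes $\langle f^*(u\varphi),\chi v\rangle=\langle v\otimes u,I\rangle$ where $I(x,y)=(2\pi)^{-d_2}\chi(x)\varphi(y)\int e^{i\eta\cdot(f(x)-y)}\,d\eta$ is a single compactly supported oscillatory integral with $\WF(I)$ contained in the conormal of the graph; it then feeds $I$ into the duality pairing $\calD'_{\Gamma_\otimes}\times\calE'_{\Lambda_\otimes}\to\bbK$, whose hypocontinuity follows from Theorem~\ref{hypotens} together with Theorem~\ref{thmdualityequi}, and the continuity of $u\mapsto\langle f^*u,v\rangle$ uniformly over an equicontinuous $B$ drops out of Lemma~\ref{hypocontcont} and Theorem~\ref{charcont}. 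No Fourier estimate beyond the tensor-product theorem is needed. You instead split $f^*=\iota^*\circ\pi^*$, handle $\pi^*u=1\otimes u$ by the same tensor-product theorem, and then prove the continuity of $\iota^*$ by straightening and estimating the $\xi''$-integral directly. Both are valid, and the geometry in your split ($\rho$ large to dodge $N_\kappa$, $\supp\widetilde\chi$ small to keep the inner cone away from $\Gamma''$ over a compact slab) does close up, precisely because the two hypotheses $(\supp\chi\times V)\cap\kappa^*\Gamma''=\emptyset$ and $\Gamma''\cap N_\kappa=\emptyset$ each control one of the two regimes and, jointly, keep even the transition region $|\xi''|\approx\rho|\xi'|$ away from $\Gamma''$. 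Two remarks. First, the ``elementary change of variables'' for the diffeomorphism step is doing more work than you let on: proving that $\phi^*$ is continuous on $\calD'_\Gamma$ for the normal topology already requires the nonstationary-phase/integration-by-parts estimate of H\"ormander's Theorem~8.2.4 (plus Lemma~\ref{Linflem} for the strong seminorms), which is exactly the analytic core that the paper's duality argument is designed to avoid repeating. Second, your treatment of the strong seminorms via $\langle\kappa^*w,g\rangle=\langle w,\kappa_*g\rangle$ and the boundedness of $\kappa_*$ is sound, but note that this adjointness identity works just as well for $\iota$ directly, so you could have bypassed the straightening entirely for the $p_B$ seminorms. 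The net tradeoff: your route is more computational and makes the frequency bookkeeping explicit, at the cost of proving a special case of the proposition (pull-back by a diffeomorphism) by hand; the paper's route is more economical, pushing all the hard analysis into the single proof of Theorem~\ref{hypotens} and then extracting every pull-back as a pairing with an oscillatory Schwartz kernel.
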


We will show this by proving
that $\langle f^*u,v\rangle$ is continuous for 
every $v$ in an equicontinuous set.
Before doing so, we characterize the equicontinuous
sets of the normal topology, which is of independent
interest.

\subsection{Equicontinuous subsets}
Let $\Omega$ be open in $\bbR^d$ and
$\Gamma$ be a closed cone in $\dotT^*\Omega$.
We define the open cone
$\Lambda=\{(x,\xi)\in \dotT^*\Omega\telque
(x,-\xi)\notin \Gamma\}$ and the
space
$\calE'_\Lambda(\Omega)$ of compactly supported distributions
$v\in \calE'(\Omega)$ such that $\WF(v)\subset \Lambda$.

Acccording to
Definition (\ref{equicontdefpremiere}),
a set $H$ is equicontinuous
in $\calE'_\Lambda(\Omega)$ (which is the strong dual
of $\mathcal{D}^\prime_\Gamma(\Omega)$~\cite{Dabrouder-13}) 
if and only if there is a finite number of 
seminorms 
$||\cdot||_{N_1,V_1,\chi_1},\dots,||\cdot||_{N_k,V_k,\chi_k}$
of $\mathcal{D}^\prime_\Gamma(\Omega)$,
a bounded subset $B_0$ of $\calD(\Omega)$
and a constant $M$  such that
\begin{eqnarray}
|\langle u,v\rangle| & \le & M \sup\{||u||_{N_1,V_1,\chi_1},
\dots, ||u||_{N_k,V_k,\chi_k},p_{B_0}(u)\}
\label{defequicont}
\end{eqnarray}
for every $u\in \calD'_\Gamma(\Omega)$ and every $v\in H$.
There is only one seminorm $p_{B_0}$ because these
seminorms are saturated~\cite[p.~107]{Horvath}
in $\calD'(\Omega)$ with the strong topology.
The following theorem will be useful to
prove the continuity of linear maps~\cite[p.~200]{Horvath}:
\begin{thm}
If $E$ is a locally convex space and
$f: E \to \calD'_\Gamma(\Omega)$ is a linear map, then
$f$ is continuous if and only if, for every
equicontinuous set $H$ of $\calE'_\Lambda(\Omega)$
the seminorm $p_H:E\to\bbR$ defined by
$p_H(x) = \sup_{v\in H} |\langle f(x),v\rangle|$
is continuous.
\end{thm}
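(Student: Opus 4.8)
The plan is to derive this from two facts: first, that for the normal topology the continuous dual of $\calD'_\Gamma(\Omega)$ is exactly $\calE'_\Lambda(\Omega)$; and second, the general principle that on any locally convex space the polars of the equicontinuous subsets of the dual form a base of neighbourhoods of $0$. Granting the first fact, the statement is then a formal consequence of the bipolar theorem, with no analysis involved.

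So first I would set up the duality. For $v\in\calE'_\Lambda(\Omega)$ the pairing $u\mapsto\langle u,v\rangle$ is well defined on $\calD'_\Gamma(\Omega)$ because $\WF(v)\subset\Lambda$ says precisely that $\WF(u)\cap\WF(v)'=\emptyset$ whenever $\WF(u)\subset\Gamma$, so Hörmander's product/pairing theorem \cite{HormanderI} applies; and this functional is continuous for the normal topology, which is exactly what the seminorms $\|\cdot\|_{N,V,\chi}$ and $p_B$ were tailored to control. Conversely, by \cite{Dabrouder-13} every continuous linear form on $\calD'_\Gamma(\Omega)$ with the normal topology arises in this way, so $\calE'_\Lambda(\Omega)=\big(\calD'_\Gamma(\Omega)\big)'$ as vector spaces, and ``$H$ equicontinuous in $\calE'_\Lambda(\Omega)$'' simply means $H\subset U^{\circ}$ for some neighbourhood $U$ of $0$ in $\calD'_\Gamma(\Omega)$. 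Since equicontinuous sets are weakly bounded (Alaoglu--Bourbaki), $p_H(x)=\sup_{v\in H}|\langle f(x),v\rangle|$ is finite for every $x$, hence a genuine seminorm on $E$.

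The ``only if'' direction is then immediate: if $f$ is continuous and $H\subset U^{\circ}$ with $U$ a neighbourhood of $0$ in $\calD'_\Gamma(\Omega)$, then $p_H\le 1$ on the neighbourhood $f^{-1}(U)$ of $0$ in $E$, so $p_H$ is continuous by homogeneity. For the ``if'' direction, let $U$ be an arbitrary neighbourhood of $0$ in $\calD'_\Gamma(\Omega)$; shrinking it we may take $U$ closed and absolutely convex, so the bipolar theorem gives $U=H^{\circ}$ with $H:=U^{\circ}$ an equicontinuous subset of $\calE'_\Lambda(\Omega)$. Then $f^{-1}(U)=\{x\in E:p_H(x)\le 1\}$, which is a neighbourhood of $0$ in $E$ precisely because $p_H$ is assumed continuous; hence $f$ is continuous.

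The only step that is not pure locally convex duality is the dual identification $\big(\calD'_\Gamma(\Omega),\text{normal}\big)'=\calE'_\Lambda(\Omega)$, and within it the only nontrivial half is that every continuous functional on $\calD'_\Gamma(\Omega)$ is represented by a compactly supported distribution with wave front set contained in $\Lambda$; I would not reprove this but quote \cite{Dabrouder-13}. Everything else --- the bipolar theorem, the fact that polars of $0$-neighbourhoods are equicontinuous and conversely, and the finiteness of $p_H$ --- is exactly the standard machinery of \cite[p.~200]{Horvath}.
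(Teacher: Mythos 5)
Your proposal is correct. The paper itself does not prove this statement but cites it from Horv\'ath~\cite[p.~200]{Horvath}; what you have written is precisely the standard duality-theoretic argument behind that reference: identify $\calE'_\Lambda(\Omega)$ as the continuous dual of $\calD'_\Gamma(\Omega)$ in the normal topology (citing \cite{Dabrouder-13}), observe that polars of equicontinuous sets form a $0$-neighbourhood base, and finish with the bipolar theorem. The only place one should be careful is when invoking $U=U^{\circ\circ}$: a closed absolutely convex $0$-neighbourhood $U$ in the normal topology is also $\sigma(\calD'_\Gamma,\calE'_\Lambda)$-closed because convex closed sets in a locally convex space are weakly closed, so the bipolar theorem does apply; you take this for granted, which is fine, but it is the one non-trivial piece of locally convex machinery being used silently. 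In short, the approach matches the paper's intent exactly, and the proof is sound.
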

The equicontinuous sets of $\calE'_\Lambda(\Omega)$ are known:
\begin{lem}
\label{charequilem}
A subset $B$ of $\calE'_\Lambda(\Omega)$ 
is equicontinuous
if and only if there is: (i) a compact set $K\subset\Omega$ containing
the support of all elements of $B$; (ii) a closed
cone $\Xi\subset\Lambda$ such that 
$B\subset \calD'_\Xi(\Omega)$, 
$B$ is bounded in $\calD'_\Xi(\Omega)$ 
and
$\underline{\pi}(\Xi)\subset K$.
\end{lem}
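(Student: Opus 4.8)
The plan is to prove the two implications of the equivalence separately, using throughout the concrete description \eqref{defequicont} of the equicontinuous subsets of $\calE'_\Lambda(\Omega)=(\calD'_\Gamma(\Omega))'$, together with the elementary observation that $\Lambda^c=-\Gamma$ inside $\dotT^*\Omega$ (immediate from $\Lambda=\{(x;\xi):(x;-\xi)\notin\Gamma\}$), so that $\Xi\subset\Lambda$ is equivalent to $\Xi\cap(-\Gamma)=\emptyset$ fibrewise.

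For necessity, fix seminorms $||\cdot||_{N_1,V_1,\chi_1},\dots,||\cdot||_{N_k,V_k,\chi_k}$, a bounded set $B_0\subset\calD(\Omega)$ and a constant $M$ realising \eqref{defequicont}, and let $K$ be the union of the compact sets $\supp\chi_i$ and of the common compact support of the elements of $B_0$. Property (i) is immediate: if $g\in\calD(\Omega)$ is supported off $K$ then $g\chi_i=0$ and $\langle g,f\rangle=0$ for $f\in B_0$, so \eqref{defequicont} applied to $u=g\in\calD'_\emptyset\subset\calD'_\Gamma$ forces $\langle g,v\rangle=0$ for all $v\in H$, whence $\supp v\subset K$. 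For (ii) one probes $v\in H$ with the smooth test functions $u=\psi e_\eta$, $e_\eta(x)=e^{-ix\cdot\eta}$, for which $\langle u,v\rangle=\widehat{\psi v}(\eta)$, $\widehat{\chi_i u}(\zeta)=\widehat{\chi_i\psi}(\zeta+\eta)$ and $\langle u,f\rangle=\widehat{\psi f}(\eta)$. Since $\chi_i\psi$ and the $\psi f$ ($f\in B_0$) stay in bounded subsets of $\calD(\Omega)$, their Fourier transforms are uniformly rapidly decreasing; hence \eqref{defequicont} shows that $\widehat{\psi v}(\eta)$ is rapidly decreasing, uniformly in $v\in H$, whenever $\supp\psi$ is small enough that $\{i:\supp\chi_i\cap\supp\psi\neq\emptyset\}=\{i:x_0\in\supp\chi_i\}=:I_0(x_0)$ and $\eta$ lies in a conic neighbourhood of a covector $\eta_0$ with $-\eta_0\notin\bigcup_{i\in I_0(x_0)}V_i$. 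The closed cone $\Xi:=\{(x;\eta):x\in K,\ \eta\neq 0,\ -\eta\in\bigcup_{i\in I_0(x)}V_i\}$ (closed because $x\mapsto I_0(x)$ is upper semicontinuous and there are finitely many closed cones $V_i$) then satisfies $\WF(v)\subset\Xi$ for all $v\in H$; $H$ is bounded in $\calD'_\Xi$ (its boundedness in the strong topology of $\calD'(\Omega)$ following from equicontinuity by Banach--Steinhaus); $\underline{\pi}(\Xi)\subset K$; and $\Xi\subset\Lambda$, because $(\supp\chi_i\times V_i)\cap\Gamma=\emptyset$ gives $-\eta\notin\Gamma_x$ as soon as $(x;\eta)\in\Xi$.

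For sufficiency we must manufacture \eqref{defequicont} from (i) and (ii). Since $\Xi\cap(-\Gamma)=\emptyset$ fibrewise over the compact set $\underline{\pi}(\Xi)\subset K$, a microlocal partition of unity built as in the proof of Lemma~\ref{lemm0} provides a finite partition $\sum_j\psi_j=1$ near $K$, cut-offs $\tilde\psi_j\in\calD(\Omega)$ equal to $1$ near $\supp\psi_j$, and functions $\alpha_j$ homogeneous of degree $0$ and smooth off the origin, such that over $\supp\tilde\psi_j$ the cone $\supp(1-\alpha_j)$ avoids $\Gamma$ while $-\supp\alpha_j$ avoids $\Xi$ over $\supp\psi_j$. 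For $u\in\calD'_\Gamma$ and $v\in H$ one writes $\langle u,v\rangle=\sum_j\langle\tilde\psi_j u,\psi_j v\rangle$ (the pairing and this Fourier-side splitting being legitimate since $\WF(\tilde\psi_j u)+\WF(\psi_j v)$ avoids zero) and splits each summand, in the Fourier variable $\xi$, by $1=(1-\alpha_j)+\alpha_j$. In the $(1-\alpha_j)$-part, $(1-\alpha_j)(\xi)\,\widehat{\tilde\psi_j u}(\xi)$ is controlled by $(1+|\xi|)^{-N_0}\,||u||_{N_0,W_j,\tilde\psi_j}$ (with $W_j$ a closed cone containing $\supp(1-\alpha_j)$ and missing $\Gamma$ over $\supp\tilde\psi_j$), while $\widehat{\psi_j v}$ has polynomial growth of an order uniform over $v\in H$ by Lemma~\ref{Linflem}; choosing $N_0$ large this part is $\le C_j\,||u||_{N_0,W_j,\tilde\psi_j}$ with $C_j$ depending only on $H$. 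The $\alpha_j$-part equals $\langle u,g_{j,v}\rangle$ with $g_{j,v}(x)=(2\pi)^{-d}\tilde\psi_j(x)\int\alpha_j(\xi)\,\widehat{\psi_j v}(-\xi)\,e^{-ix\cdot\xi}\,d\xi$, and the family $\{g_{j,v}:v\in H\}$ is bounded in $\calD(\Omega)$ because $\widehat{\psi_j v}(-\xi)$ is rapidly decreasing on $\supp\alpha_j$, uniformly in $v\in H$ (boundedness of $H$ in $\calD'_\Xi$ together with $(-\supp\alpha_j)\cap\Xi=\emptyset$ over $\supp\psi_j$). Summing over the finitely many $j$ and absorbing the strong seminorms $p_{\{g_{j,v}:v\in H\}}$ into a single $p_{B_0}$ yields \eqref{defequicont}.

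The main obstacle is the sufficiency direction, for two reasons. The microlocal partition must be adapted at once to $\Gamma$ (on the $u$-side) and to $\Xi$ (on the $v$-side), reconciled only through $(-\Xi)\cap\Gamma=\emptyset$; this is the analogue of Lemma~\ref{lemm0} with the signs and the two cones interchanged, and it rests on the same nested-compactness argument used there to shrink supports. More substantially, the $\alpha_j$-term cannot be handled by bounding $\widehat{\tilde\psi_j u}$ by its polynomial growth, since the order of a distribution in $\calD'_\Gamma$ is not uniform; one must instead read it as a pairing $\langle u,g_{j,v}\rangle$ against a $v$-dependent test function and check that these test functions form a fixed bounded subset of $\calD(\Omega)$ — exactly the device used for the term $I_3$ in the proof of Theorem~\ref{hypotens}.
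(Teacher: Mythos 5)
Your proof is correct, but it takes a genuinely different and more self-contained route than the paper's. The paper proves both directions by citing results from \cite{Dabrouder-13}: for sufficiency it invokes the inductive-limit description of $\calE'_\Lambda$ as $\varinjlim E_\ell$ and quotes Eq.~(8) of that reference to deliver the estimate \eqref{defequicont}; for necessity it ``follows exactly the proof of Prop.~7'' of that reference to read off $K$ and $\Xi = \bigcup_j\supp\chi_j\times(-V_j)$, then gives a short duality argument ($||\cdot||_{N,V,\chi}$ as $\sup_\xi|\langle\cdot,f_\xi\rangle|$) for boundedness of $B$ in $\calD'_\Xi$. You avoid the reference entirely: for necessity you extract $\Xi$ by probing \eqref{defequicont} with the modulated bumps $\psi e_\eta$, and for sufficiency you run the microlocal splitting by hand — a partition of unity adapted to the disjoint closed cones $\Gamma$ and $-\Xi$ over $K$, the $(1-\alpha_j)$-term controlled by a single H\"ormander seminorm of $u$ together with Lemma~\ref{Linflem}, and the $\alpha_j$-term absorbed into a strong seminorm $p_{B_0}$ by recasting it as a pairing $\langle u,g_{j,v}\rangle$ against a $v$-uniformly bounded family of test functions (the $I_3$-device from Theorem~\ref{hypotens}, as you note). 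Your $\Xi$ agrees with the paper's: $\{(x;\eta)\telque -\eta\in\bigcup_{i\in I_0(x)}V_i\}$ is exactly $\bigcup_j\supp\chi_j\times(-V_j)$. The trade-off: the paper's proof is shorter but opaque without \cite{Dabrowski-13}; yours is longer but readable in isolation and uses only the machinery already present in this paper.

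One small write-up caveat in your necessity step: the parenthetical ``(its boundedness in the strong topology\ldots by Banach--Steinhaus)'' covers only the $p_B$ seminorms of the normal topology of $\calD'_\Xi$. The boundedness of $\sup_{v\in H}||v||_{N,V,\chi}$ for $(\supp\chi\times V)\cap\Xi=\emptyset$ also needs to be stated explicitly; it follows from the very rapid-decay estimate you establish just above, by covering $\supp\chi\times(V\cap S^{n-1})$ with finitely many of the small sets $\supp\psi\times(\text{conic nbhd of }\eta_0)$ on which the uniform decay holds and writing $\chi=\sum_\ell\psi_\ell$. Worth spelling out, since it is precisely the part the paper disposes of with the $\{f_\xi\}$-duality argument.
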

\begin{proof}
We first prove that every such $B$ is equicontinuous.
We showed in~\cite{Dabrouder-13} that
the space $\calE'_\Lambda(\Omega)$ is the inductive limit
of spaces $E_\ell=\{v\in \calE'_\Lambda(\Omega)\telque
\supp v\in L_\ell, \WF(v)\in \Lambda_\ell\}$,
where the compact sets $L_\ell$ exhaust $\Omega$
and the closed cones $\Lambda_\ell$ exhaust $\Lambda$.
Thus, there is an integer $\ell$ such that
$\Xi \subset \Lambda_\ell$ and 
$B\subset E_\ell$. The inclusion of $\Xi$ in $\Lambda_\ell$
implies that every seminorm $||\cdot||_{N,V,\chi}$ of
$E_\ell$ is also a seminorm of $\calD'_\Xi(\Omega)$ 
because
$\supp\chi\times V$ does not meet
$\Xi$ if it does not meet $\Lambda_\ell$. 
Thus, $B$ is bounded in $E_\ell$ and 
Eq.~(8) of~\cite{Dabrouder-13} gives us
\begin{eqnarray*}
\sup_{v\in B}|\langle u,v\rangle| & \le & \sum_j
\Big(
p_{B_j}(u) + ||u||_{m+n+1,V_j,\chi_j} C
I_n^{n+1}
+ ||u||_{n,V_j,\chi_j} M_{n,W_j,\chi_j}
  I_n^{2n} \Big),
\end{eqnarray*}
which can be converted to the equicontinuity
condition \eqref{defequicont}.

To show the converse, we denote by $B$ the
set of all $v\in \calE'_\Lambda(\Omega)$ that satisfy
Eq.~\eqref{defequicont}.
Then, by following exactly the proof of
Prop.~7 of \cite{Dabrouder-13},
we obtain that the support of all
elements of $B$ is included in a compact set
$K=\cup_j \supp\chi_j \cup K'$, where 
$K'$ is a compact set containing the support
of all $f\in B_0$.  Moreover, 
the wave front set of all elements of
$B$ is contained in
$\Xi=\cup_j \supp\chi_j\times (-V_j)$.
It remains to show that $B$ is bounded in
$\calD'_\Xi(\Omega)$ for the normal
topology.
We first notice that, if 
$\supp\chi\times (-V)\subset \Xi$, then
$||\cdot||_{N,V,\chi}$ is a continuous
seminorm of the strong dual $\calE'_{(\Xi')^c}(\Omega)$
of $\calD'_\Xi(\Omega)$.
Indeed, it was shown in the proof
of Prop.~7 of \cite{Dabrouder-13} that
$||u||_{N,V,\chi}=\sup_{\xi\in V}|\langle u,f_\xi\rangle|$,
where $f_\xi(x)=(1+|\xi|)^N \chi(x) e^{ -i\xi\cdot x}$
and the set $\{f_\xi,\xi\in V\}$ is bounded in 
$\calD'_\Xi(\Omega)$.
If $B'$ is a bounded set in $\calE'_{(\Xi')^c}(\Omega)$,
the continuous seminorms $||u||_{N,V,\chi}$
and $p_{B_0}(u)$ of $\calE'_{(\Xi')^c}(\Omega)$
appearing on the
right hand side of (\eqref{defequicont})
are bounded over $B'$.
Thus, for any bounded set $B^\prime$ in
$\calE'_{(\Xi')^c}(\Omega)$, taking $u\in B^\prime$
and taking the sup in (\eqref{defequicont})
over $u\in B^\prime$ yields that  
$\sup_{u\in B',v\in B} |\langle u,v\rangle|$ is bounded
and $B$ is a bounded subset of
$\calD'_\Xi(\Omega)$ when $\calD'_\Xi(\Omega)$ is equipped
with the strong $\beta(\calD'_\Xi,\calE'_{(\Xi')^c})$ topology.
It is shown in~\cite[Theorem 33]{Dabrouder-13} that the bounded
sets of $\calD'_\Gamma(\Omega)$ coincide for the strong and
the normal topologies. Thus, $B$ is bounded for
the normal topology.
\end{proof}

We obtain the following characterization of continuous 
linear maps:
\begin{thm}
\label{charcont}
Let $E$ be a locally convex space, $\Omega$ an open subset
of $\bbR^d$ and $\Gamma$ a closed cone in $\dotT^*\Omega$.
A linear map $f:E\to \calD'_\Gamma(\Omega)$ is continuous if and only if every map
$f_B: E\to \bbR$ defined by 
$f_B(x)=\sup_{v\in B}|\langle f(x),v\rangle|$
is continuous, 
where $B$ is equicontinuous in 
$\mathcal{E}^\prime_\Lambda
(\Omega)$, with
$\Lambda=\left(\Gamma^\prime\right)^c$.
\end{thm}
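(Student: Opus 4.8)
The plan is to obtain Theorem~\ref{charcont} by combining the abstract duality criterion recalled just above (the statement on p.~200 of~\cite{Horvath}) with two facts about $\calD'_\Gamma(\Omega)$: the identification of its dual and the description of its equicontinuous subsets. Concretely, what has to be assembled is (a) that $\calD'_\Gamma(\Omega)$, equipped with the normal topology, is a Hausdorff locally convex space whose strong dual is exactly $\calE'_\Lambda(\Omega)$ with $\Lambda=(\Gamma')^c$; (b) the standard principle that on any locally convex space $F$ the original topology is the topology of uniform convergence on the equicontinuous subsets of $F'$, so that a linear map into $F$ is continuous as soon as its composition with each of the associated seminorms is continuous; and (c) Lemma~\ref{charequilem}, which turns the abstract condition ``$B$ equicontinuous in $\calE'_\Lambda(\Omega)$'' into the concrete support-plus-boundedness description that makes the criterion usable in practice.

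First I would recall from~\cite{Dabrouder-13} that $\calD'_\Gamma(\Omega)$ with the normal topology is a normal space of distributions in the sense of Definition~\ref{normaltopdef} and that its strong dual is $\calE'_\Lambda(\Omega)$; this is precisely the place where the choice of the normal topology matters, since it is this topology that puts $\calD'_\Gamma(\Omega)$ in duality with $\calE'_\Lambda(\Omega)$ with the right equicontinuous sets. Then I would invoke the locally convex generality that the polars $H^\circ$ of the equicontinuous subsets $H\subset F'$ form a neighbourhood basis of $0$ in $F$, equivalently that the seminorms $u\mapsto\sup_{v\in H}|\langle u,v\rangle|$, with $H$ equicontinuous, generate the topology of $F$. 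Specialising $F=\calD'_\Gamma(\Omega)$ and $F'=\calE'_\Lambda(\Omega)$, a linear map $f:E\to\calD'_\Gamma(\Omega)$ is continuous if and only if $f$ composed with each such seminorm is continuous on $E$, i.e. if and only if each $f_B(x)=\sup_{v\in B}|\langle f(x),v\rangle|$ is continuous for $B$ equicontinuous in $\calE'_\Lambda(\Omega)$ --- which is exactly the assertion. The direction ``$f$ continuous $\Rightarrow$ each $f_B$ continuous'' is immediate because $f_B$ is the composition of $f$ with a continuous seminorm of $\calD'_\Gamma(\Omega)$; the converse is where one uses that these particular seminorms already generate the target topology.

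The part that deserves care --- and which is carried out in Lemma~\ref{charequilem} rather than here --- is that ``equicontinuous in $\calE'_\Lambda(\Omega)$'' must be read with respect to the normal topology on $\calD'_\Gamma(\Omega)$, so that one genuinely needs to know these sets are the ones cut out by a common compact support together with boundedness in some $\calD'_\Xi(\Omega)$, $\Xi\subset\Lambda$ closed with $\underline{\pi}(\Xi)$ contained in a compact set. That in turn relies on the coincidence of the bounded sets of $\calD'_\Gamma(\Omega)$ for the strong and the normal topologies~\cite[Theorem 33]{Dabrouder-13}. Given all this, the proof of Theorem~\ref{charcont} is essentially one line; its value is that, for instance in Proposition~\ref{pullbackprop}, one only ever has to estimate $\langle f(x),v\rangle$ for $v$ running over such an explicitly given equicontinuous family rather than over all the seminorms $||\cdot||_{N,V,\chi}$ and $p_B$ of the normal topology.
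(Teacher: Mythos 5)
Your proposal is correct and follows essentially the same route as the paper. The paper states the abstract duality criterion (cited from Horv\'ath, p.~200) just before Lemma~\ref{charequilem}, and then records Theorem~\ref{charcont} as an immediate consequence with no further argument; your write-up simply makes explicit the underlying mechanism (the topology of a locally convex space $F$ is the topology of uniform convergence on equicontinuous subsets of $F'$, plus the identification of the dual of $\calD'_\Gamma(\Omega)$ in the normal topology with $\calE'_\Lambda(\Omega)$), and correctly identifies Lemma~\ref{charequilem}, together with the equality of strong and normal bounded sets from~\cite[Theorem~33]{Dabrouder-13}, as the place where the real content lives.
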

The equicontinuous sets of $\calE'_\Lambda(\Omega)$ intervene
also because the duality pairing enjoys a 
sort of hypocontinuity where, for $\calE'_\Lambda(\Omega)$,
the bounded sets are replaced by the equicontinuous ones:
\begin{thm}\label{thmdualityequi}
Let the duality pairing  $\calD'_\Gamma(\Omega)\times
\calE'_\Lambda(\Omega)\to\bbK$
be defined by $u\times v\to f(u,v)=\langle u,v\rangle$.
Then, for every bounded set $A$ of $\calD'_\Gamma(\Omega)$ and every
equicontinuous set $B$ of $\calE'_\Lambda(\Omega)$ the sets of maps
$\{f_u\telque u\in A\}$ and 
$\{f_v\telque v\in B\}$ are equicontinuous~\cite[p.~157]{Kothe-II}.
\end{thm}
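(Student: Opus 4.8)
The plan is to reduce the theorem to two facts about a locally convex space $X$ and its topological dual $X'$, applied with $X=\calD'_\Gamma(\Omega)$, $X'=\calE'_\Lambda(\Omega)$ and the canonical pairing $\langle u,v\rangle$. The assertion about $\{f_v\telque v\in B\}$ is purely definitional: a subset $B$ of a topological dual $X'$ is, by definition, equicontinuous precisely when the family of evaluation forms $\{\langle\cdot,v\rangle\telque v\in B\}$ on $X$ is equicontinuous; with $X=\calD'_\Gamma(\Omega)$ this family is exactly $\{f_v\telque v\in B\}$, and in concrete terms the hypothesis is the estimate~\eqref{defequicont}, which is word for word the desired conclusion. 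So here there is nothing to prove, and the content of the theorem lies in the other assertion.

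For $\{f_u\telque u\in A\}$ with $A$ bounded in $\calD'_\Gamma(\Omega)$, I would argue as follows. By~\cite{Dabrouder-13}, $\calE'_\Lambda(\Omega)$ carries the strong topology $\beta(\calE'_\Lambda,\calD'_\Gamma)$ attached to the pairing, i.e.\ the topology of uniform convergence on the bounded subsets of $\calD'_\Gamma(\Omega)$; hence the seminorm $p_A(v)=\sup_{u\in A}|\langle u,v\rangle|$ is, by the very definition of that topology, a continuous seminorm on $\calE'_\Lambda(\Omega)$, and its continuity is exactly the equicontinuity of $\{f_u\telque u\in A\}$. A more robust variant, insensitive to the precise topology put on $\calE'_\Lambda(\Omega)$, goes through barrelledness: as recalled in the proof of Lemma~\ref{charequilem}, $\calE'_\Lambda(\Omega)$ is the inductive limit of the spaces $E_\ell=\{v\in\calE'_\Lambda(\Omega)\telque\supp v\subset L_\ell,\ \WF(v)\subset\Lambda_\ell\}$, hence barrelled; each $u\in\calD'_\Gamma(\Omega)$ restricts to a continuous form on every $E_\ell$, so $A\subset(\calE'_\Lambda(\Omega))'$ and $A$ is weak-$*$ bounded there (for each fixed $v$ one has $\sup_{u\in A}|\langle u,v\rangle|<\infty$, since $v$ is continuous on $\calD'_\Gamma(\Omega)$ and $A$ is bounded, equivalently $\sigma(\calD'_\Gamma,\calE'_\Lambda)$-bounded by Mackey's theorem); and in a barrelled space every weak-$*$ bounded subset of the dual is equicontinuous.

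The main obstacle is not analytic but a matter of identifications: one must make sure that the space $\calE'_\Lambda(\Omega)$ whose equicontinuous sets are characterized in Lemma~\ref{charequilem} is the same topological vector space --- same topology, same duality --- for which~\cite{Dabrouder-13} establishes both the strong-dual and the inductive-limit descriptions used above. Once these identifications are granted, and they are all part of~\cite{Dabrouder-13}, the statement is formal, which is why it can simply be quoted in the generic form of~\cite[p.~157]{Kothe-II}. If a fully self-contained treatment were wanted, one could bypass barrelledness altogether and extract the required uniform bound of $|\langle u,v\rangle|$ ($u\in A$) by finitely many seminorms of $v\in E_\ell$ directly from the structure-theorem estimate (Eq.~(8) of~\cite{Dabrouder-13}) already exploited in the proof of Lemma~\ref{charequilem}, with the roles of $u$ and $v$ exchanged.
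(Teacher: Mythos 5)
Your proof is correct, and it unpacks essentially the argument behind the reference the paper points to: the paper gives no proof of this theorem, it merely cites \cite[p.~157]{Kothe-II}, and what you have written is a clean reconstruction of that standard hypocontinuity result for the canonical bilinear form on $E\times E'_\beta$. Your identification of the $\{f_v\telque v\in B\}$ half as purely definitional is exactly right, and both variants you offer for the $\{f_u\telque u\in A\}$ half (strong-dual seminorm $p_A$ on one hand, barrelledness of $\calE'_\Lambda(\Omega)$ as an inductive limit plus Banach--Steinhaus on the other) are the two standard proofs of K\"othe's statement. You are also right that the real content is the bookkeeping: that $\calE'_\Lambda(\Omega)$ with the topology used in this paper is the strong dual of $\calD'_\Gamma(\Omega)$ with the normal topology, with the agreeing duality, and that its inductive-limit description makes it barrelled --- all of which is established in \cite{Dabrouder-13} and taken for granted here. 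So the proposal is correct and matches the route the paper outsources to K\"othe.
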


\subsection{Proof of continuity of the pull-back}

\paragraph{Strategy
of the proof.}
Let $\Omega_1$ and $\Omega_2$ be open sets in 
$\bbR^{d_1}$ and $\bbR^{d_2}$, respectively.
Let $f:\Omega_1\to\Omega_2$ be a smooth map and
$\Gamma$ be a closed cone in $\dotT^*\Omega_2$.
We want to show that the pull-back 
$f^*:\calD'_\Gamma(\Omega_2)
\to \calD'_{f^*\Gamma}(\Omega_1)$ is continuous
for the normal topology.
According to Theorem~\ref{charcont},
the pull-back is continuous if and only if,
for every equicontinuous set 
$B\subset \calE'_\Lambda(\Omega_1)$
(where $\Lambda=(f^*\Gamma)^{',c}$) the family of maps
$(\rho_v)_{v\in B}$, defined by
$\rho_v:u\mapsto \langle f^*u,v\rangle$,
is equicontinuous 
which implies that
$\sup_{v\in B}\vert 
\left\langle f^*u,v \right\rangle
\vert$
is continuous in
$u$. 
By Lemma~\ref{charequilem}, we know that there is a compact set
$K\subset \Omega_1$ and a closed cone $\Xi\subset(f^*\Gamma')^c$
such that $\supp v\subset K$ and $\WF(v)\subset \Xi$ for all
$v\in B$. Choose a function  $\chi\in \calD(\Omega_1)$ 
such that $\chi|_K=1$. If $(\varphi_i)_{i\in I}$ is a partition of unity
of $\Omega_2$, we can write
$\langle f^*u,v\rangle=\sum_i\langle f^*(u\varphi_i),v\chi\rangle$.
The image of $\supp\chi$ by 
$f$ being compact~\cite[p.~19]{Bredon}, only a finite
number of terms of this sum are nonzero and 
the family $\rho_v$ is equicontinuous if and only if,
for every $\varphi\in \calD(\Omega_2)$,
the family of maps $u\mapsto \langle f^*(u\varphi),v{\chi}\rangle$
is equicontinuous.
\paragraph{Stationary phase and Schwartz kernels.}

In order to calculate the pairing between $f^*(u\varphi)$ and $v$,
we first
notice that, 
when $u$ is a locally integrable function, then
$u\varphi(y)=\calF^{-1}(\widehat{u\varphi})(y)=
(2\pi)^{-d_2}\int_{\bbR^{d_2}} d\eta e^{i\eta\cdot y} 
\widehat{u\varphi}(\eta)$,
so that 
$f^*\left(u\varphi\right)(x)=(2\pi)^{-d_2} \int_{\bbR^{d_2}} d\eta 
  e^{i\eta\cdot f(x)} \widehat{u\varphi}(\eta)$
and
\begin{eqnarray*}
\langle f^*\left(u\varphi\right), \chi v\rangle &=& \frac{1}{(2\pi)^{d_2}}
\int_{\Omega_1} 
\int_{\bbR^{d_2}} 
 \chi(x) v(x)
e^{i\eta \cdot f(x)} \widehat{u\varphi}(\eta)
d x d \eta\\
&=&\frac{1}{(2\pi)^{d_2}}
\int_{\bbR^{d_2}}
\int_{\Omega_1} 
\int_{\bbR^{d_2}} 
 \chi(x) v(x)
e^{i\eta \cdot f(x)} e^{-iy\cdot\eta}u(y)\varphi(y)
dyd x d \eta .
\end{eqnarray*}
This definition can be extended to any distribution
$u\in \calD'_\Gamma$ as
\begin{eqnarray}
\label{fsuphiv}
\langle f^*(u\varphi), \chi v\rangle &=& \frac{1}{(2\pi)^{d}}
  \int_{\bbR^{d}} 
   u(y)v(x)I(x,y)dxdy,
\end{eqnarray}
where $d=d_1+d_2$, 
$I(x,y)=\int_{\bbR^{d_2}}
e^{i\eta \cdot( f(x)-y)} \varphi(y)\chi(x) d \eta$.
The duality pairing can also be written
$\langle f^*(u\varphi),v\chi\rangle = 
  \langle v\otimes u, I\rangle$.
Note that 
$I(x,y)=(2\pi)^{-d_2}
\chi(x)\varphi(y)
\int d\eta e^{i\eta\cdot( f(x)-y)}$
is an oscillatory integral~\cite{HormanderI} with
symbol $\chi(x)\varphi(y)$ and
phase $\eta\cdot( f(x)-y)$ where 
$\eta\cdot( f(x)-y)$ is
homogeneous of degree
$1$ with respect to $\eta$, for all $\eta\neq 0,
d\left(\eta\cdot( f(x)-y)\right)\neq 0$. 
Therefore, 
$I\in\mathcal{D}^\prime(\Omega_1\times\Omega_2)$ 
is the 
Schwartz
kernel of the bilinear
continuous map:
$(u,v) \mapsto \left\langle f^*(u\varphi), v\chi \right\rangle$.

\paragraph{ Proof of Proposition \ref{pullbackprop}.}

By Theorem~\ref{hypotens},
the map $(v,u)\mapsto v\otimes u$ is hypocontinuous
from $\calD'_\Xi\times \calD'_\Gamma$ to
$\calD'_{\Gamma_\otimes}$ where 
$\Gamma_\otimes=\Xi\times \Gamma \cup (\Omega_1\times\{0\})\times \Gamma
\cup \Xi\times(\Omega_2\times\{0\})$.
Let $\Lambda_{\otimes}$
be the open cone $\Gamma_{\otimes}^{\prime,c}$.
Therefore by Theorem \ref{thmdualityequi}, the 
family of duality pairings
$u\otimes v\in\calD'_{\Gamma_\otimes} \mapsto 
\left\langle u\otimes v,r \right\rangle$
is equicontinuous
from $\calD'_{\Gamma_\otimes}$ to $\bbK$
uniformly in $r\in B^\prime$
for every equicontinuous set $B'$ of $\calE'_{\Lambda_\otimes}$.
In particular, if $B'$ contains only the element
$I$, then 
the map $v\otimes u\mapsto \langle v\otimes u,I\rangle$
would be continuous  
since
$I$ is compactly supported in 
$\supp\chi\times\supp\varphi$ and its
wave front set is contained in $\Lambda_\otimes$.
Thus, if $\WF(I)\subset \Lambda_\otimes$,
then the map $(v,u)\mapsto \langle v\otimes u,I\rangle$
is hypocontinuous by the next lemma.
\begin{lem}
\label{hypocontcont}
The composition of a hypocontinuous map by a continuous linear map is
hypocontinuous.
\end{lem}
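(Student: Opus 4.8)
The statement I need is the post-composition version, which is the one invoked in the proof of Proposition~\ref{pullbackprop}: if $f:E\times F\to G$ is a hypocontinuous bilinear map and $g:G\to H$ is a continuous linear map, then $g\circ f:E\times F\to H$ is hypocontinuous. The pre-composition version (replacing $f$ by $f\circ(g_1\times g_2)$ for continuous linear $g_1,g_2$) is proved in the same way, and I would only mention it in passing.

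The plan is to work with the equicontinuity reformulation of hypocontinuity recalled right after Definition~\ref{hypodef}: a bilinear map $h:E\times F\to G$ is hypocontinuous iff, for every bounded set $A\subset E$, the family of partial maps $\{h_x\telque x\in A\}$ (with $h_x(y)=h(x,y)$) is equicontinuous, and symmetrically for bounded subsets of $F$. Since $(g\circ f)_x=g\circ f_x$ and $(g\circ f)_y=g\circ f_y$, the lemma reduces to the elementary fact that post-composition with a fixed continuous linear map $g:G\to H$ carries an equicontinuous family of linear maps into $G$ to an equicontinuous family of linear maps into $H$.

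To prove that fact: let $\mathcal{H}$ be an equicontinuous set of linear maps $F\to G$, and let $W$ be a neighborhood of $0$ in $H$. By continuity of $g$ the set $g^{-1}(W)$ is a neighborhood of $0$ in $G$, so by equicontinuity of $\mathcal{H}$ there is a neighborhood $V$ of $0$ in $F$ with $\psi(V)\subset g^{-1}(W)$ for every $\psi\in\mathcal{H}$; hence $g(\psi(V))\subset W$ for every $\psi\in\mathcal{H}$, which is exactly the equicontinuity of $\{g\circ\psi\telque\psi\in\mathcal{H}\}$. Applying this to $\mathcal{H}=\{f_x\telque x\in A\}$ and to $\mathcal{H}=\{f_y\telque y\in B\}$ for $A\subset E$ and $B\subset F$ bounded yields both defining conditions of Definition~\ref{hypodef} for $g\circ f$, so $g\circ f$ is hypocontinuous.

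I do not expect a genuine obstacle here; the only point to get right is that the continuous map is applied \emph{after} the hypocontinuous bilinear map (in the application $g=\langle\,\cdot\,,I\rangle$ and $f$ is the tensor product). If one prefers an estimate in terms of seminorms rather than neighborhoods, one observes that for each continuous seminorm $r$ on $H$ the seminorm $r\circ g$ is continuous on $G$, hence dominated by a finite maximum of defining seminorms of $G$; substituting that maximum into the inequalities \eqref{hypogenA} and \eqref{hypogenB} for $f$ gives the corresponding inequalities for $g\circ f$. The two proofs are equivalent.
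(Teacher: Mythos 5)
Your proof is correct and is essentially the same argument as the paper's: you pull the target neighborhood back through $g$ by continuity, then invoke hypocontinuity of $f$ on the resulting neighborhood of zero in $G$. Phrasing it via equicontinuity of the family of partial maps is a cosmetic repackaging of the paper's direct neighborhood chase, and you rightly make explicit the (implicit in the paper) assumption that $g$ is linear, so that $g^{-1}(W)$ is a neighborhood of zero.
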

\begin{proof}
Let $f:E\times F\to G$ be a hypocontinuous map
and $g:G\to H$ a continuous linear map.
The map $g\circ f$ is hypocontinuous if and only if, for every bounded set
$B\subset F$
and every neighborhood $W$ of zero in $H$, there is a neighborhood
$U$ of zero in $E$ such that $(g\circ f)(U\times B)\subset W$
(with the similar condition for $(g \circ f)(A\times
V)$).
By the continuity of $g$, there is a neighborhood $Z$ of zero in
$G$ such that $g(Z)\subset W$. By the hypocontinuity of $f$,
there is a neighborhood $U$ of zero in $E$ such that
$f(U\times B)\subset Z$. Thus, $(g\circ f)(U\times B)
\subset g(Z)\subset W$.
\end{proof}
Therefore, the map
$(v,u)\mapsto \langle f^*(u\varphi),\chi v\rangle$ is hypocontinuous,
by item (i) of Definition~\ref{hypodef}, 
this implies that the family of maps
$\rho_v:u\mapsto \langle f^*(u\varphi),\chi v\rangle$
with $v\in B$ is equicontinuous.
It just remains to check that
$\WF(I)\subset \Lambda_\otimes$, i.e. that
$\WF(I)'$ does not meet $\Gamma_\otimes$.
The wave front set of $I$ is 
$\WF(I)\subset \{(x,f(x);-\eta\circ d_x f,\eta)\telque 
x\in \supp\chi\}$~\cite[p.~260]{HormanderI}.
Recall that $\Xi\subset(f^*\Gamma')^c=\{(x,-\eta\circ df_x)\telque
  (f(x),\eta)\notin\Gamma\}$. By definition of
$\Gamma_\otimes$ we must satisfy the following three conditions:
\begin{itemize}
\item $\Xi\times \Gamma \cap \WF(I)'=\emptyset$ because
it is the set of points 
$(x,f(x);-\eta\circ d_x f,\eta)$ such that
$(f(x),\eta)\notin\Gamma$ by definition of $\Xi$
and $(f(x),\eta)\in \Gamma$ by definition of $\Gamma$;
\item $\Xi\times (\Omega_2\times\{0\}) \cap \WF(I)'=\emptyset$ because
we would need $\eta=0$ whereas $(y,\eta)\in \Gamma$ implies
$\eta\not=0$; 
\item
$(\supp\chi\times\{0\})\times \Gamma\cap \WF(I)'\subset
\{(x,f(x);0,\eta)\telque x\in\supp\chi, \eta\circ df_x=0,
(f(x),\eta)\in \Gamma\}$.
\end{itemize}
Thus, if $f^*\Gamma\cap N_f=\emptyset$, 
then $\WF(I)'\cap\Gamma_\otimes=\emptyset$ and
the pull-back is continuous.

\paragraph{How to write the pull--back operator in terms of the Schwartz kernel $I$ ? Relationship with the product of distributions.} \label{kernelfonctoriel}
We start from a linear operator 
$L:\mathcal{D}(\mathbb{R}^{d_2})\longmapsto \mathcal{D}^\prime(\mathbb{R}^{d_1}) $ 
with corresponding Schwartz kernel 
$K\in\mathcal{D}^\prime(\mathbb{R}^{d_1}\times\mathbb{R}^{d_2})$.
Using the standard
operations on distributions, we
can make sense of the well--known
representation formula
$Lu=\int_{\mathbb{R}^{d_2}}  K(x,y) u(y)dy $
for an operator $L:\mathcal{D}(\mathbb{R}^{d_2})\longmapsto \mathcal{D}^\prime(\mathbb{R}^{d_1})$
and its kernel $K\in\mathcal{D}^\prime(\mathbb{R}^{d_1}\times\mathbb{R}^{d_2})$.
Let us define the two projections
$\pi_2:=(x,y)\in\mathbb{R}^{d_1}\times \mathbb{R}^{d_2}\longmapsto y\in\mathbb{R}^{d_2}$
and
$\pi_1:=(x,y)\in\mathbb{R}^{d_1}\times \mathbb{R}^{d_2}\longmapsto x\in\mathbb{R}^{d_1}$,
then we define
$K(x,y)u(y)=K(x,y)\pi_2^*u(x,y)=K(x,y)\left(1(x)\otimes u(y)\right)$ 
where $\pi_2^*u=1\otimes u$ 
and $\int_{\mathbb{R}^{d_2}}dyf(x,y)=\pi_{1*}f(x)$.
Therefore 
\begin{equation}
Lu=\int_{\mathbb{R}^{d_2}}  K(x,y) u(y)dy=\pi_{1*}\left(K\left(\pi_2^* u\right) \right).
\end{equation}

The interest
of the formula $Lu=\pi_{1*}\left(K\left(\pi_2^* u\right) \right)$ 
is that everything 
generalizes
to oriented manifolds. Replace
$\mathbb{R}^{d_2}$ (resp $\mathbb{R}^{d_1}$) 
with a 
manifold $M_2$ (resp $M_1$) 
with smooth volume densities 
$\vert\omega_2\vert$ 
(resp $\vert\omega_1\vert$), 
the duality pairing is
defined as the extension
of the usual 
integration 
against the 
volume densities, for
instance:
$\forall (u,\varphi)\in C^\infty(M_1)\times \mathcal{D}(M_1),
\left\langle u,\varphi\right\rangle_{M_1}=\int_{M_1}(u\varphi)\omega_1 $.
Finally, for the 
linear continuous map $L:=u\in\mathcal{D}(\mathbb{R}^{d_2})\longmapsto \chi f^*(u\varphi)\in \mathcal{D}^\prime(\mathbb{R}^{d_1})$, we get the formula:
$Lu= \pi_{1*}\left( I (\pi_2^* u)\right)$
where $I(x,y)=(2\pi)^{-d_2}
\chi(x)\varphi(y)
\int d\eta e^{i\eta\cdot(f(x)-y)}$ is the Schwartz kernel of $L$.

\subsection{Pull-back by families of smooth maps}
To renormalize quantum field theory in curved spacetimes,
it will be crucial to pull-back by family of smooth maps.
We start with a simple lemma.
\begin{lem}\label{Gammaclosed}
Let $\Omega_1,\Omega_2,U$
be open sets
in
$\mathbb{R}^{d_1},\mathbb{R}^{d_2},\mathbb{R}^{n}$
respectively.
For any compact sets
$(K_1\subset \Omega_1,K_2\subset\Omega_2,A\subset U)$
and
$f$ a smooth map
$f:\Omega_1\times U \to\Omega_2$,
the conic set
\begin{eqnarray*}
\Gamma
=\{(x,f(x,a);-\eta\circ d_xf(x,a),\eta)\telque (x,a,f(x,a))\in K_1\times A\times K_2,\eta\neq 0 \}
\end{eqnarray*}
is closed 
in $\dot{T}^*\left(\Omega_1\times\Omega_2\right)$.
\end{lem}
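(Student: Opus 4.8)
The plan is to prove closedness by a direct sequential argument, which is legitimate since $\dot{T}^*(\Omega_1\times\Omega_2)$ is an open subset of a Euclidean space, hence metrizable. First I would take an arbitrary sequence $(x_n,y_n;\xi_n,\eta_n)\in\Gamma$ converging in $\dot{T}^*(\Omega_1\times\Omega_2)$ to a point $(x,y;\xi,\eta)$, so that in particular $(\xi,\eta)\neq 0$, and show that this limit again lies in $\Gamma$. By definition of $\Gamma$ there are points $a_n\in A$ with $y_n=f(x_n,a_n)$, $\xi_n=-\eta_n\circ d_xf(x_n,a_n)$ and $\eta_n\neq 0$; moreover $x_n\in K_1$ and $y_n\in K_2$.

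Next I would exploit compactness. Passing to a subsequence, $a_n\to a\in A$, and since $K_1$ and $K_2$ are compact (hence closed subsets of $\Omega_1$ and $\Omega_2$), the limits satisfy $x\in K_1\subset\Omega_1$ and $y\in K_2\subset\Omega_2$. Continuity of $f$ then gives $y=f(x,a)$, and continuity of the first derivatives of $f$ on $\Omega_1\times U$, together with $\eta_n\to\eta$ and the continuity of the composition map $(\eta,L)\mapsto\eta\circ L$, gives $\xi_n=-\eta_n\circ d_xf(x_n,a_n)\to-\eta\circ d_xf(x,a)$, hence $\xi=-\eta\circ d_xf(x,a)$. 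The one thing left to check before concluding $(x,y;\xi,\eta)\in\Gamma$ is that $\eta\neq 0$; but this is automatic, for if $\eta=0$ then $\xi=-\eta\circ d_xf(x,a)=0$ as well, contradicting the fact that $(\xi,\eta)$ is the fibre coordinate of a point of $\dot{T}^*(\Omega_1\times\Omega_2)$. Therefore $(x,a,f(x,a))\in K_1\times A\times K_2$, $y=f(x,a)$, $\xi=-\eta\circ d_xf(x,a)$ and $\eta\neq0$, so the limit point belongs to $\Gamma$, and $\Gamma$ is closed in $\dot{T}^*(\Omega_1\times\Omega_2)$.

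I do not expect a genuine obstacle here: the argument is a routine compactness-and-continuity limit. The only mild subtleties are to make sure the base points of the limit stay inside $\Omega_1\times\Omega_2$ (this is exactly where compactness of $K_1,K_2$ as subsets of $\Omega_1,\Omega_2$ enters), and to note that extracting a subsequence in $a_n$ costs nothing, since the ambient sequence already converges and hence so does every subsequence, to the same limit.
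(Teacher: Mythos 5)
Your proof is correct and follows essentially the same route as the paper's: extract a convergent subsequence of the parameters $a_n$ by compactness of $A$, pass to the limit using continuity of $f$ and $d_xf$, and observe that $\eta=0$ would force $\xi=0$, contradicting that the limit lies in $\dot T^*(\Omega_1\times\Omega_2)$. The only cosmetic difference is that you spell out more explicitly that $x\in K_1$, $y\in K_2$ and that metrizability justifies the sequential argument, points which the paper leaves implicit.
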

\begin{proof}
Let $(x,y;\xi,\eta)\in\overline{\Gamma}$
such that $(\xi,\eta)\neq (0,0)$. 
Then there is
a sequence 
\begin{eqnarray*}
(x_n,f(x_n,a_n);-\eta_n\circ d_xf(x_n,a_n),\eta_n)\in\Gamma, 
(x_n,a_n,f(x_n,a_n))\in K_1\times A\times K_2
\end{eqnarray*}
which converges to $(x,y;\xi,\eta)$.
By compactness of $A$, we extract
a convergent subsequence $a_n\rightarrow a$. By continuity
of $d_xf$,
we find that
$\xi=-\eta\circ d_xf(x,a)$,
we also find that
$\underset{n\rightarrow \infty}{\lim} 
f(x_n,a_n)=f(x,a)\in K_2$
since $K_2$ is closed
and we finally 
note that
we must have 
$\eta\neq 0$
otherwise $\xi=0,\eta=0$.
Therefore 
$(x,y;\xi,\eta)\in\Gamma$
by definition.
Finally, 
$\overline{\Gamma}
\subset\Gamma$
hence $\Gamma$ is closed. 
\end{proof}
\begin{prop}\label{Ifaequicontinuous}
Let $\Omega_1$ be an open set in $\bbR^{d_1}$,
$A\subset U\subset \bbR^n$ 
where $A$ is compact, $U$ 
and $\Omega_2$ are open sets 
in $\bbR^{d_2}$.
Let $\chi\in \calD(\Omega_1)$, 
$\varphi\in \calD(\Omega_2)$ and
$f$ a smooth map
$f:\Omega_1\times U \to\Omega_2$.
\begin{enumerate}
\item Then
the family of distributions $(I_{f(.,a)})_{a\in A}$ 
formally defined by
\begin{eqnarray*}
I_{f(.,a)}(x,y) &=& 
 \chi(x)\varphi(y)
  \int_{\bbR^{d_2}} \frac{d\theta}{(2\pi)^{d_2}}
   e^{i\theta\cdot( f(x,a)-y)}
\end{eqnarray*}
is a bounded set in $\calD'_{\Gamma}$, 
where
$\Gamma$ 
is the 
closed cone in $\dotT^*(\Omega_1\times\Omega_2)$
defined by:
\begin{eqnarray*}
\Gamma &=&\{(x,f(x,a);-\eta\circ d_xf(x,a),\eta)\telque
\\ & &  x\in \supp\chi,
f(x,a)\in \supp\varphi,a\in A,\eta\neq 0 \}.
\end{eqnarray*}
\item For any
open cone $\Lambda$ 
containing $\Gamma$,
$(I_{f(.,a)})_{a\in A}$
is
equicontinuous in
$\mathcal{E}^\prime_{\Lambda}(\Omega_1\times\Omega_2)$.
\end{enumerate}
\end{prop}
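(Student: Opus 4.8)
The plan is to prove part~(1) by identifying $I_{f(.,a)}$ explicitly, and then to deduce part~(2) directly from Lemma~\ref{charequilem}. Since the phase $\theta\cdot(f(x,a)-y)$ is linear in $\theta$ and its differential in the $y$-variable never vanishes, the oscillatory integral defining $I_{f(.,a)}$ is nothing but $\chi\otimes\varphi$ times the pull-back of the Dirac mass at $0\in\bbR^{d_2}$ by the submersion $(x,y)\mapsto f(x,a)-y$; equivalently, for every $h\in\calD(\Omega_1\times\Omega_2)$,
\[
\langle I_{f(.,a)},h\rangle=\int_{\Omega_1}\chi(x)\,\varphi(f(x,a))\,h(x,f(x,a))\,dx .
\]
By H\"ormander's theorem on the wave front set of a pull-back by a submersion (as for the kernel $I$ in Section~\ref{pullbacksect}, see~\cite[p.~260]{HormanderI}), this gives $\WF(I_{f(.,a)})\subset\{(x,f(x,a);-\eta\circ d_xf(x,a),\eta)\telque x\in\supp\chi,\ f(x,a)\in\supp\varphi,\ \eta\neq0\}\subset\Gamma$ and $\supp I_{f(.,a)}\subset K:=\supp\chi\times\supp\varphi$. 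By Lemma~\ref{Gammaclosed} the cone $\Gamma$ is closed, so $I_{f(.,a)}\in\calD'_\Gamma$ for every $a\in A$.

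I would then prove that $(I_{f(.,a)})_{a\in A}$ is bounded in $\calD'_\Gamma$ for the normal topology. As that topology is defined by the seminorms $p_B$ of the strong topology of $\calD'(\Omega_1\times\Omega_2)$ together with the seminorms $\|\cdot\|_{N,V,\psi}$ with $(\supp\psi\times V)\cap\Gamma=\emptyset$, it is enough to bound each of these uniformly in $a\in A$. For $p_B$ this is immediate from the displayed formula: $|\langle I_{f(.,a)},h\rangle|\le C\,\sup_K|h|$ with $C$ depending only on $\chi$ and $\varphi$, which is uniformly bounded when $a$ runs over $A$ and $h$ over a bounded subset of $\calD(\Omega_1\times\Omega_2)$. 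For the H\"ormander seminorms, taking $h(x,y)=\psi(x,y)\,e^{-i(\xi\cdot x+\eta\cdot y)}$ turns the Fourier transform into the absolutely convergent integral
\[
\widehat{\psi I_{f(.,a)}}(\xi,\eta)=\int_{\Omega_1}b_a(x)\,e^{-i(\xi\cdot x+\eta\cdot f(x,a))}\,dx,\qquad b_a(x):=\psi(x,f(x,a))\,\chi(x)\,\varphi(f(x,a)),
\]
whose amplitude $b_a$ and phase $\Phi_a(x)=\xi\cdot x+\eta\cdot f(x,a)$ have all $x$-derivatives bounded uniformly in $a\in A$, because $f$ is smooth and $\supp\chi\times A$ is compact. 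The crucial point is a uniform lower bound $|\nabla_x\Phi_a(x)|=|\xi+\eta\circ d_xf(x,a)|\ge c\,|(\xi,\eta)|$ for all $x\in\supp b_a$, $(\xi,\eta)\in V$ and $a\in A$; were it false, a compactness argument as in Step~1 of the proof of Lemma~\ref{lemm0} (extract convergent subsequences in $\supp\chi$, in $A$ and in the unit sphere of $V$) would produce a point of $(\supp\psi\times V)\cap\Gamma$, which is empty, the case $\eta=0$ being excluded since it would force $(\xi,\eta)=0$. Given this bound, the standard non-stationary phase estimate obtained by integrating by parts $N$ times in $x$ gives $|\widehat{\psi I_{f(.,a)}}(\xi,\eta)|\le C_N(1+|(\xi,\eta)|)^{-N}$ uniformly in $a\in A$, whence $\sup_{a\in A}\|I_{f(.,a)}\|_{N,V,\psi}<\infty$. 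This proves part~(1).

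Part~(2) then follows by applying Lemma~\ref{charequilem} to $B=(I_{f(.,a)})_{a\in A}$ with the compact set $K=\supp\chi\times\supp\varphi$ and the closed cone $\Xi=\Gamma$: indeed $\Gamma\subset\Lambda$ by hypothesis, every element of $B$ is supported in $K$, $B\subset\calD'_\Gamma$ is bounded there by part~(1), and $\underline{\pi}(\Gamma)\subset K$. Therefore $B$ is equicontinuous in $\calE'_\Lambda(\Omega_1\times\Omega_2)$.

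The hard part is the uniform non-stationary phase estimate of part~(1): the non-stationarity of the one-dimensional phase $\Phi_a$, and in particular the uniform gradient lower bound, is exactly where the compactness of $A$, $\supp\chi$ and $\supp\varphi$ and the closedness of $\Gamma$ (Lemma~\ref{Gammaclosed}) are used; everything else is routine.
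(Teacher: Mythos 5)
Your proof is correct, but it takes a genuinely different route from the paper's. The paper reduces boundedness of the family $(I_{f(\cdot,a)})_{a\in A}$ to finiteness of $\sup_{a\in A}|\langle I_{f(\cdot,a)},v\rangle|$ for each $v\in\calE'_{\Gamma'^{c}}$, using a duality characterization from \cite{Dabrouder-13}; it then interprets $a\mapsto\langle I_{f(\cdot,a)},v\rangle$ as $\pi_{3*}(I_f\,\pi_{12}^*v)$, a push-forward of a product of distributions on $\Omega_1\times\Omega_2\times U$, and verifies by wave front set calculus (including the condition $\WF(v)\cap\Gamma'=\emptyset$) that this push-forward has empty wave front set near $A$, hence is smooth and bounded on $A$. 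In other words, the paper's proof is entirely geometric, but it explicitly forward-references the push-forward theorem proved later in Section~\ref{prodpushsect}. You instead identify $I_{f(\cdot,a)}$ explicitly as the cutoff conormal distribution of the graph of $f(\cdot,a)$, giving $\langle I_{f(\cdot,a)},h\rangle=\int_{\Omega_1}\chi(x)\varphi(f(x,a))h(x,f(x,a))\,dx$, and then bound the two families of seminorms of the normal topology directly: the strong-topology seminorms by the trivial sup estimate, and the H\"ormander seminorms by a uniform non-stationary phase estimate whose key gradient lower bound $|\xi+\eta\circ d_xf(x,a)|\ge c\,|(\xi,\eta)|$ follows by compactness from $(\supp\psi\times V)\cap\Gamma=\emptyset$. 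What this buys: your proof is self-contained within Section~\ref{pullbacksect} (no reliance on the later push-forward theorem or on external boundedness criteria), at the price of being more computational. What the paper's route buys: it never touches an explicit oscillatory-integral estimate, staying faithful to the paper's stated philosophy of avoiding ``hard analytic estimates'' in favour of wave front set geometry. Your deduction of part~(2) from part~(1) via Lemma~\ref{charequilem} is exactly the paper's own argument.
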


We will use the 
pushforward Theorem
\ref{pushforwardthm},
whose proof will be given in
Section $7$,
in the following proof.

\begin{proof}
 From Lemma
\ref{charequilem} and from the fact that
$(I_{f(.,a)})_{a\in A}$
is supported
in a fixed compact set 
$\supp\chi\times\supp\varphi$, we deduce that
conclusion (2) follows from the first claim
thus it suffices
to prove the
claim (1).

The conic set $\Gamma$ is closed
by Lemma \ref{Gammaclosed}. 
To prove
that the family
$\left(I_{f(.,a)}\right)_{a\in A}$
is bounded in $\mathcal{D}^\prime_\Gamma$,
it suffices
to check that
$\forall v\in \mathcal{E}^\prime_{\Gamma^{\prime
c}}(\Omega_1\times\Omega_2)$,
$\sup_{a\in A}\vert\left\langle I_{f(.,a)},v \right\rangle\vert  <
+\infty$
because of 
\cite[Proposition 1]{Dabrouder-13}.

\textbf{Step 1}
Our goal is to study
the map
$a\longmapsto \int_{\Omega_1\times\Omega_2} I_f(x,y,a)v(x,y)$
where 
\begin{eqnarray*}
I_{f}(x,y,a) = 
 \chi(x)\varphi(y)
  \int_{\bbR^{d_2}} \frac{d\theta}{(2\pi)^{d_2}}
   e^{i\theta\cdot(f(x,a)-y)}
\end{eqnarray*}
Let 
$\pi_{12},\pi_3$
be projections from $\Omega_1\times\Omega_2\times U$
defined by 
$\pi_{12}(x,y,a)=(x,y)$ and 
$\pi_3(x,y,a)=a$.
Using the dictionary explained 
in paragraph
\ref{kernelfonctoriel},
if $v$ were a 
test function,
then we would
find
that
\begin{eqnarray}\label{pushforwardkeyformula}
\int_{\Omega_1\times\Omega_2} I_f(x,y,.)v(x,y)
dxdy &=& \pi_{3*}\left(I_f\pi_{12}^*v \right)\in \mathcal{D}^\prime(U).
\end{eqnarray}
We want to prove 
that $a\longmapsto  
\int_{\Omega_1\times\Omega_2} I_f(x,y,a)v(x,y)dxdy$
is smooth 
in some open
neighborhood
of $A$ since this
would imply
that 
\begin{eqnarray*}
\sup_{a\in A}\vert 
\int_{\Omega_1\times\Omega_2} I_f(x,y,a)v(x,y)dxdy\vert
=\sup_{a\in A}\vert\left\langle
I_{f(.,a)},v\right\rangle\vert<+\infty.
\end{eqnarray*}
In order to do so, it suffices
to prove that the condition
$v\in \mathcal{E}^\prime_{\Gamma^{\prime,c}}$
implies that
the distributional 
product $I_f(x,y,a)v(x,y)=I_f\left(\pi_{12}^*v \right)(x,y,a)$
makes sense in $\mathcal{D}^\prime(\Omega_1\times \Omega_2\times U)$
and the push--forward 
$\pi_{3*}\left(I_f\pi_{12}^*v \right)= \int_{\Omega_1\times\Omega_2} I_f(x,y,.)v(x,y)dxdy$
has empty wave front set over
some open neighborhood
of $A$.

\textbf{Step 2}
The wave front set $WF(I_f)$ is the set of all 
\begin{eqnarray*}
\left(
\begin{array}{ccc}
x&;&-\theta\circ d_xf\\
f(x,a)&;&\theta\\
a&;&-\theta\circ d_af  
\end{array} \right)  
\end{eqnarray*}
such that $x\in \text{supp }\varphi,f(x,a)\in\text{supp }\chi, a\in U , \theta\neq 0$. 
And the wave front set $WF(\pi_{12}^*v)$ is the set of all
$\left(\begin{array}{ccc}
x&;&\xi\\
y&;&\eta\\
a&;&0 \end{array} \right) $
such that
$\left(\begin{array}{ccc}
x&;&\xi\\
y&;&\eta
\end{array}\right)\in WF(v).$

One also have 
$v\in\mathcal{E}^\prime_{\Gamma^{\prime c}}$ implies
$WF(v)\cap \Gamma^\prime=\emptyset$ so that
$\xi\neq -\eta\circ d_xf$ and
\begin{eqnarray*}
\forall \theta , \left(\begin{array}{c}
\xi-\theta\circ d_xf \\
\theta+\eta
\end{array}\right)=
\left(\begin{array}{c}
0 \\
0
\end{array}\right)
\text{has no solution}
\end{eqnarray*}
Observe that
\begin{eqnarray*}
WF(I_f)+WF(\pi_{12}^*v)=\{ \left(\begin{array}{ccc}
x&;& \xi-\theta\circ d_xf \\
f(x)&;&\theta+\eta \\
a&;& -\theta\circ d_af
\end{array}\right),\forall\theta \in\mathbb{R}^d\setminus \{0\} \},
\end{eqnarray*}
implies
$\left(WF(I_f)+WF(\pi_{12}^*v)\right)
\cap \underline{0}=\emptyset$ and 
$\left(WF(I_f)\cup WF(\pi_{12}^*v)\right)
\cap \underline{0}=\emptyset$.

\textbf{Step 3} In the last step,
we shall prove that the condition
$WF(v)\cap \Gamma^\prime=\emptyset$
actually implies that
$WF\left(\pi_{3*}\left(I_f\pi_{12}^*v\right)\right)$
is empty over some
open neighborhood $U^\prime$
of $A$. 
The condition
$
WF(v)\cap\Gamma^\prime=\emptyset 
$
implies the existence
of
some open 
neighborhood $U^\prime$
of $A$ s.t.
\begin{eqnarray*}
\forall a\in U^\prime, \forall (x,f(x,a);\xi,\eta)\in WF(v),\xi\neq -\eta\circ d_xf(x,a).
\end{eqnarray*}
Since $A$ and $\text{supp }v$ are compact
and $A\times WF(v)$ is closed,
we can find $\delta>0$ s.t. $$\forall (a,(x,f(x,a);\xi,\eta))\in A\times WF(v),
\,\ \vert\xi+\eta\circ d_xf(x,a)\vert\geqslant \delta\vert\eta\vert.$$
Define $U^\prime=\{a\in U \telque  \forall (x,f(x,a);\xi,\eta)\in WF(v),\vert\xi+\eta\circ d_xf(x,a)\vert > \frac{\delta}{2}\vert\eta\vert\}$.
Therefore, the
condition 
$WF(v)\cap\Gamma^\prime=\emptyset$
on the wave front set of
$v$
ensures that
$\pi_{3*}\left(I_f\pi_{12}^*v\right)$
is well defined in
$\mathcal{D}^\prime_\emptyset(U^\prime)=C^\infty(U^\prime)$.
Hence $a\longmapsto \left\langle v,I_{f(.,a)}\right\rangle
= \int_{\Omega_1\times\Omega_2} I_f(x,y,a)v(x,y)dxdy$
is smooth on $U^\prime$, a fortiori 
continuous on the compact set $A$
which means that
$\sup_{a\in A}\vert \left\langle v,I_{f(.,a)}\right\rangle\vert <+\infty$.
\end{proof}

\begin{thm}\label{pbenfamille}
Let $\Omega_1\subset \mathbb{R}^{d_1},\Omega_2\subset
\mathbb{R}^{d_2}$ 
be two open sets, $A\subset U\subset \bbR^n$ where
$A$ compact, $U$
open 
and 
$\Gamma$ a closed cone in $\dotT^*\Omega_2$. 
Let $f:\Omega_1\times U \to \Omega_2$ be a smooth map such that
$\forall a\in A,  f(.,a)^*\Gamma$
does not meet 
the zero section
$\underline{0}$ 
and set 
$\Theta=\bigcup_{a\in A}f(.,a)^*\Gamma$.
Then for all seminorms 
$P_B$ of $\mathcal{D}^\prime_\Theta(\Omega_1)$,
$\forall u\in\mathcal{D}^\prime_\Gamma(\Omega_2)$, 
the family $P_B(f(.,a)^*u)_{a\in A}$ is bounded.
\end{thm}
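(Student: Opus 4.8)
The plan is to deduce the statement from Proposition~\ref{Ifaequicontinuous} --- the equicontinuity of the family of Schwartz kernels attached to the maps $f(\cdot,a)$, $a\in A$ --- combined with the hypocontinuity of the tensor product in the normal topology (Theorem~\ref{hypotens}) and with the equicontinuity of the duality pairing $\calD'_{\Gamma_\otimes}\times\calE'_{\Lambda_\otimes}\to\bbK$ (Theorem~\ref{thmdualityequi}). The substantial analytic work has already been done in Proposition~\ref{Ifaequicontinuous}, so what will remain is essentially a bookkeeping of wave front sets.

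I would begin by fixing $u\in\calD'_\Gamma(\Omega_2)$ and a seminorm $P_B$, where $B$ is a bounded subset of the predual $\calE'_\Lambda(\Omega_1)$ with $\Lambda=(\Theta')^c$; one checks, as in Lemma~\ref{Gammaclosed}, that $\Theta$ is a closed cone. Using the structure of $\calE'_\Lambda(\Omega_1)$ recalled in the proof of Lemma~\ref{charequilem}, $B$ has its supports in a fixed compact $K_1\subset\Omega_1$ and is bounded in $\calD'_\Xi(\Omega_1)$ for a suitable closed cone $\Xi\subset\Lambda$ with $\underline{\pi}(\Xi)\subset K_1$. I would then pick $\chi\in\calD(\Omega_1)$ with $\chi=1$ near $K_1$; since $A$ is compact and $f$ continuous, $f(\supp\chi\times A)$ is a compact subset of $\Omega_2$, so a finite partial sum $\psi\in\calD(\Omega_2)$ of a locally finite partition of unity of $\Omega_2$ can be chosen with $\psi=1$ near $f(\supp\chi\times A)$. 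Exactly as in Section~\ref{pullbacksect} (formula~\eqref{fsuphiv2}), for every $a\in A$ and every $v\in B$ this gives $\langle f(\cdot,a)^*u,v\rangle=\langle f(\cdot,a)^*(\psi u),\chi v\rangle=\langle v\otimes u,I_{f(\cdot,a)}\rangle$, where $I_{f(\cdot,a)}$ is the kernel of Proposition~\ref{Ifaequicontinuous} taken with $\varphi:=\psi$. Therefore $\sup_{a\in A}P_B(f(\cdot,a)^*u)=\sup_{a\in A}\sup_{v\in B}|\langle v\otimes u,I_{f(\cdot,a)}\rangle|$, and it will suffice to bound this quantity.

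Next, setting $\Gamma_\otimes=(\Xi\cup\underline{0})\times(\Gamma\cup\underline{0})\setminus(\underline{0},\underline{0})$, Theorem~\ref{hypotens} makes the tensor product hypocontinuous from $\calD'_\Xi(\Omega_1)\times\calD'_\Gamma(\Omega_2)$ to $\calD'_{\Gamma_\otimes}(\Omega_1\times\Omega_2)$, so that $\mathcal{A}:=\{v\otimes u:v\in B\}$ is a bounded subset of $\calD'_{\Gamma_\otimes}$, since a hypocontinuous bilinear map sends a product of bounded sets to a bounded set. The key verification --- and, I expect, the only real obstacle --- is that the closed cone $\Gamma_I$ attached by Proposition~\ref{Ifaequicontinuous} to $\chi$ and $\psi$, namely the set of $((x;-\eta\circ d_xf(x,a)),(f(x,a);\eta))$ with $x\in\supp\chi$, $f(x,a)\in\supp\psi$, $a\in A$ and $\eta\neq 0$, is disjoint from $\Gamma_\otimes'$. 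This is the wave front computation from the proof of Proposition~\ref{pullbackprop} carried out uniformly in $a\in A$: a common point would force $(f(x,a);-\eta)\in\Gamma$ (because $-\eta\neq 0$ excludes the zero section of $T^*\Omega_2$), and then either $\eta\circ d_xf(x,a)=0$, so $(f(x,a);-\eta)\in N_{f(\cdot,a)}\cap\Gamma$, which is empty by hypothesis and Lemma~\ref{lempullbackconicset}, or $(x;\eta\circ d_xf(x,a))\in\Xi\subset(\Theta')^c$, which forces $(x;-\eta\circ d_xf(x,a))\notin\Theta$ even though it belongs to $f(\cdot,a)^*\Gamma\subset\Theta$ --- again a contradiction. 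Hence $\Gamma_I\subset\Lambda_\otimes:=(\Gamma_\otimes')^c$, an open cone, and part (2) of Proposition~\ref{Ifaequicontinuous} then says that $H:=\{I_{f(\cdot,a)}:a\in A\}$ is an equicontinuous subset of $\calE'_{\Lambda_\otimes}(\Omega_1\times\Omega_2)$.

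To conclude, I would apply Theorem~\ref{thmdualityequi} to the pairing $\calD'_{\Gamma_\otimes}\times\calE'_{\Lambda_\otimes}\to\bbK$ with the equicontinuous set $H$: the family of linear forms $\{w\mapsto\langle w,I_{f(\cdot,a)}\rangle:a\in A\}$ is equicontinuous on $\calD'_{\Gamma_\otimes}$, so $q:=\sup_{a\in A}|\langle\,\cdot\,,I_{f(\cdot,a)}\rangle|$ is a continuous seminorm there, hence bounded on the bounded set $\mathcal{A}$. This yields $\sup_{a\in A}P_B(f(\cdot,a)^*u)=\sup_{w\in\mathcal{A}}q(w)<+\infty$, as claimed. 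Everything here is routine once Proposition~\ref{Ifaequicontinuous}, Theorem~\ref{hypotens} and Theorem~\ref{thmdualityequi} are in place; the one delicate point is the incidence-relation computation of the previous paragraph, which is where the hypotheses $f(\cdot,a)^*\Gamma\cap\underline{0}=\emptyset$ (for all $a\in A$) and $\Theta=\bigcup_{a\in A}f(\cdot,a)^*\Gamma$ are used.
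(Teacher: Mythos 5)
Your proof is correct and follows essentially the same route as the paper's: pair against the Schwartz kernels $I_{f(\cdot,a)}$, use Proposition~\ref{Ifaequicontinuous} to see that they form an equicontinuous subset of $\calE'_{\Lambda_\otimes}$, and conclude via the hypocontinuity of the tensor product (Theorem~\ref{hypotens}) together with Theorem~\ref{thmdualityequi}. You make explicit the wave-front incidence check ($\Gamma_I\cap\Gamma_\otimes'=\emptyset$) that the paper leaves implicit (``we can easily verify as in the proof of Proposition~\ref{pullbackprop}''); the only minor slip is calling $B$ a \emph{bounded} rather than an \emph{equicontinuous} subset of $\calE'_\Lambda(\Omega_1)$, which is what Lemma~\ref{charequilem} and the seminorm description of the normal topology on $\calD'_\Theta$ actually require, but this does not affect the argument since you immediately invoke that lemma's characterization.
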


\begin{proof}
We need to prove that
$\sup_{(v,a)\in B\times A} \vert  \langle f(.,a)^*u, v\rangle\vert<+\infty$
for any equicontinuous subset $B$ of 
$\mathcal{E}^\prime_{\Theta^{\prime,c}}(\Omega_1)$.
It follows from
Lemma \ref{charequilem}
that there exists 
some closed cone $\Xi$
such that $\Xi\cap\Theta^\prime=\emptyset$
and $B\subset \mathcal{D}_{\Xi}^\prime(\Omega_1)$.
Set $\Gamma_\otimes=\left(\Xi\times \Gamma\right) \cup \left((\Omega_1\times\{0\})\times \Gamma\right)
\cup \left(\Xi\times(\Omega_2\times\{0\})\right)$. 
Let $\Lambda_\otimes$ be the open cone
defined as $\Lambda_\otimes=(\Gamma_\otimes)^{\prime,c}$.
By proposition \ref{Ifaequicontinuous},
we can easily verify
as in the proof
of Proposition
\ref{pullbackprop} that
the 
family 
$(I_{f(.,a)})_{a\in A}$
is equicontinuous
in $\calE'_{\Lambda_\otimes}$.
Then we prove similarly
as in the proof
of the pull-back Proposition  
\ref{pullbackprop}
that the family of maps
$\rho_{v,a}:u\mapsto \langle f(.,a)^*(u\varphi),\chi v\rangle$
with $(v,a)\in B\times A$ is equicontinuous where
$B$ is equicontinuous
in $\mathcal{E}^\prime_{\Theta^{\prime,c}}(\Omega_1)$ and
$\chi$ is chosen in such a way that 
$\chi|_{\text{supp }B}=1$
and $\varphi|_{f(\text{supp }B)}=1$, therefore:
\begin{eqnarray*}
\sup_{(v,a)\in B\times A} \vert  \langle f(.,a)^*u, v\rangle\vert=
\sup_{(v,a)\in B\times A} \vert \langle f(.,a)^*(u\varphi),\chi v\rangle\vert<+\infty.
\end{eqnarray*}
and the result follows
from 
Theorem \ref{charcont}.
\end{proof}

\section{Product and push--forward of distributions}
\label{prodpushsect}
H\"ormander noticed that the product of distributions $u$ and $v$ can be
described
as the composition of the tensor product $(u,v)\mapsto u\otimes v$
with the pull-back by the map $f: x\mapsto (x,x)$.
If the wave front sets of $u$ and $v$ are contained in $\Gamma_1$ and
$\Gamma_2$, then
the wave front set of $u\otimes v$ is contained in
$\Gamma_\otimes=\left(\Gamma_1\times \Gamma_2 \right)\cup\left( (\Omega_1\times\{0\})\times
\Gamma_2\right)
 \cup \left( \Gamma_1 \times (\Omega_2\times\{0\})\right)$
and the pull-back is well-defined if
the set $N_f=\{ (x,x;\eta_1,\eta_2)\telque 
(\eta_1+\eta_2)  \circ  dx)=0 \}$, 
which is the conormal bundle 
of the diagonal 
$\Delta\subset \mathbb{R}^n\times\mathbb{R}^n$,
does not meet $\Gamma$, i.e. if there is no point $(x;\eta)$ in
$\Gamma_1$
such that $(x;-\eta)$ is in $\Gamma_2$.
This gives us
\begin{thm}
\label{hypocontprod}
Let $\Omega\subset \bbR^n$ be an open set
and $\Gamma_1,\Gamma_2$ be two closed cones in
$\dotT^*\Omega$ such that $\Gamma_1\cap \Gamma_2'=\emptyset$.
Then the product of distributions
is hypocontinuous for the normal topology from
$\calD'_{\Gamma_1}\times\calD'_{\Gamma_2}$
to $\calD'_\Gamma$, where
\begin{eqnarray}
\Gamma &= &
\big(\Gamma_1 \times_\Omega \Gamma_2\big) \cup
\big((\Omega\times\{0\})\times_\Omega \Gamma_2\big)
\cup \big(\Gamma_1\times_\Omega (\Omega\times\{0\})\big).
\label{Gammaprod}
\end{eqnarray}
\end{thm}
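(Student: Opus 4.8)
The plan is to realize the product of distributions as a composition of two operations whose regularity we have already established. As H\"ormander observed, if $\Delta\colon\Omega\to\Omega\times\Omega$ denotes the diagonal embedding $x\mapsto(x,x)$, then for $u\in\calD'_{\Gamma_1}(\Omega)$ and $v\in\calD'_{\Gamma_2}(\Omega)$ one has $uv=\Delta^*(u\otimes v)$. Now the tensor product is hypocontinuous by Theorem~\ref{hypotens}, the pull-back by a smooth map is continuous by Proposition~\ref{pullbackprop}, and the composition of a hypocontinuous map with a continuous one is hypocontinuous by Lemma~\ref{hypocontcont}; so the proof reduces to checking that the geometric hypotheses of these statements are satisfied and to identifying the wave front cone that comes out.

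First I would invoke Theorem~\ref{hypotens}: $(u,v)\mapsto u\otimes v$ is hypocontinuous from $\calD'_{\Gamma_1}(\Omega)\times\calD'_{\Gamma_2}(\Omega)$ to $\calD'_{\Gamma_\otimes}(\Omega\times\Omega)$, where
\[
\Gamma_\otimes=(\Gamma_1\times\Gamma_2)\cup\big((\Omega\times\{0\})\times\Gamma_2\big)\cup\big(\Gamma_1\times(\Omega\times\{0\})\big).
\]
Next I would check that $N_\Delta\cap\Gamma_\otimes=\emptyset$. Since $d\Delta_x(w)=(w,w)$, we have $(\eta_1,\eta_2)\circ d\Delta_x=\eta_1+\eta_2$, so $N_\Delta=\{(x,x;\eta_1,\eta_2)\telque\eta_1+\eta_2=0,\ (\eta_1,\eta_2)\neq0\}$. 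A point of $N_\Delta$ lying in $\Gamma_1\times\Gamma_2$ would give $(x;\eta_1)\in\Gamma_1$ and $(x;-\eta_1)\in\Gamma_2$ with $\eta_1\neq0$, i.e. a point of $\Gamma_1\cap\Gamma_2'$, excluded by hypothesis; a point of $N_\Delta$ in $(\Omega\times\{0\})\times\Gamma_2$ forces $\eta_1=0$, hence $\eta_2=0$, contradicting $\Gamma_2\subset\dotT^*\Omega$; and symmetrically for $\Gamma_1\times(\Omega\times\{0\})$. So $N_\Delta\cap\Gamma_\otimes=\emptyset$, and Proposition~\ref{pullbackprop} tells us that $\Delta^*\colon\calD'_{\Gamma_\otimes}(\Omega\times\Omega)\to\calD'_{\Delta^*\Gamma_\otimes}(\Omega)$ is continuous for the normal topology.

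It then remains to compute $\Delta^*\Gamma_\otimes$. Restricting each of the three pieces of $\Gamma_\otimes$ to base points $(x,x)$ and applying $(\eta_1,\eta_2)\mapsto\eta_1+\eta_2$ yields $\Gamma_1+_\Omega\Gamma_2$ from $\Gamma_1\times\Gamma_2$, the cone $(\Omega\times\{0\})+_\Omega\Gamma_2$ from $(\Omega\times\{0\})\times\Gamma_2$, and $\Gamma_1+_\Omega(\Omega\times\{0\})$ from $\Gamma_1\times(\Omega\times\{0\})$; their union is precisely the cone $\Gamma$ of~\eqref{Gammaprod}, and it is contained in $\dotT^*\Omega$ because the hypothesis $\Gamma_1\cap\Gamma_2'=\emptyset$ prevents $\eta_1+\eta_2$ from vanishing on the first piece (this is just the statement $N_\Delta\cap\Gamma_\otimes=\emptyset$ again). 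Composing the hypocontinuous map $u\otimes v$ with the continuous map $\Delta^*$ and applying Lemma~\ref{hypocontcont} then gives that $(u,v)\mapsto uv$ is hypocontinuous from $\calD'_{\Gamma_1}\times\calD'_{\Gamma_2}$ to $\calD'_\Gamma$ for the normal topology, as claimed. I do not expect a genuine obstacle in this argument: all the analytic content has already been absorbed into Theorem~\ref{hypotens}, Proposition~\ref{pullbackprop} and Lemma~\ref{hypocontcont}, and the only point needing care is the cotangent-space bookkeeping — that $N_\Delta$ avoids $\Gamma_\otimes$, which is exactly where $\Gamma_1\cap\Gamma_2'=\emptyset$ is used, and that $\Delta^*\Gamma_\otimes$ reproduces the displayed cone $\Gamma$.
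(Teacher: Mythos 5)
Your proposal is correct and follows exactly the same route as the paper: factor $uv=\Delta^*(u\otimes v)$, invoke Theorem~\ref{hypotens}, Proposition~\ref{pullbackprop} and Lemma~\ref{hypocontcont}, and check that $N_\Delta\cap\Gamma_\otimes=\emptyset$ is precisely the hypothesis $\Gamma_1\cap\Gamma_2'=\emptyset$. The only difference is that you spell out the cotangent-space bookkeeping ($N_\Delta$ and $\Delta^*\Gamma_\otimes$) in detail, whereas the paper's proof is a one-liner relying on the discussion given just before the statement.
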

\begin{proof}
The product of distribution is the composition of the hypocontinuous
tensor product
with the continuous pull-back  (see Lemma~\ref{hypocontcont}).
\end{proof}

This theorem has the useful corollary:
\begin{cor}\label{prodsmooth}
Let $\Omega\subset \bbR^n$ be an open set
and $\Gamma$ be a closed cone in
$\dotT^*\Omega$.
Then the product of a smooth map and a distribution
is hypocontinuous for the normal topology from
$C^\infty(\Omega)\times \calD'_\Gamma$ to $\calD'_\Gamma$.
\end{cor}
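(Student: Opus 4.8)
The plan is to deduce the corollary from Theorem~\ref{hypocontprod} by regarding a smooth function as a distribution with empty wave front set. The first step is to invoke appendix~\ref{topequiCinfty}, which identifies $\calD'_\emptyset(\Omega)$ with $C^\infty(\Omega)$ as locally convex spaces, the normal topology of $\calD'_\emptyset(\Omega)$ being exactly the usual Fréchet topology of $C^\infty(\Omega)$. Under this identification the map $(g,u)\mapsto gu$ on $C^\infty(\Omega)\times\calD'_\Gamma(\Omega)$ becomes the distribution product restricted to $\calD'_\emptyset(\Omega)\times\calD'_\Gamma(\Omega)$.

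Next I would apply Theorem~\ref{hypocontprod} with $\Gamma_1=\emptyset$ and $\Gamma_2=\Gamma$. The compatibility hypothesis $\Gamma_1\cap\Gamma_2'=\emptyset$ holds trivially. It remains to compute the target cone: since $\emptyset+_\Omega C=\emptyset$ for every cone $C\subset\dotT^*\Omega$, two of the three pieces in \eqref{Gammaprod} are empty, while
\[
(\Omega\times\{0\})+_\Omega\Gamma=\{(x;0+\xi)\telque (x;\xi)\in\Gamma\}=\Gamma .
\]
Hence Theorem~\ref{hypocontprod} yields that the distribution product is hypocontinuous from $\calD'_\emptyset(\Omega)\times\calD'_\Gamma(\Omega)$ to $\calD'_\Gamma(\Omega)$ for the normal topology, which is exactly the assertion of the corollary after the identification of the first step. (Equivalently, one may observe that the product is the composition of the tensor product $C^\infty(\Omega)\times\calD'_\Gamma(\Omega)\to\calD'_{\Gamma_\otimes}(\Omega\times\Omega)$ with the pull-back by the diagonal map, and invoke Theorem~\ref{hypotens}, Proposition~\ref{pullbackprop} and Lemma~\ref{hypocontcont}.)

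Both steps are purely formal, so no genuine analytic difficulty arises; the main point is entirely bookkeeping, namely the simplification of the output cone. The one place deserving attention is that the identification $\calD'_\emptyset(\Omega)\cong C^\infty(\Omega)$ must be an isomorphism of topological vector spaces, not merely of vector spaces, so that hypocontinuity genuinely transfers to the stated domain; this is precisely what appendix~\ref{topequiCinfty} provides.
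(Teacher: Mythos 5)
Your proof is correct and follows the same route as the paper: identify $C^\infty(\Omega)$ with $\calD'_\emptyset(\Omega)$ via the appendix, then apply Theorem~\ref{hypocontprod} with $\Gamma_1=\emptyset$, $\Gamma_2=\Gamma$. Your explicit computation that the output cone in \eqref{Gammaprod} reduces to $\Gamma$ is a welcome elaboration of what the paper leaves implicit.
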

\begin{proof}
By Lemma~\ref{topequlem},
$C^\infty(\Omega)$ and $\calD'_\emptyset$ are
topologically isomorphic. Therefore, the 
corollary follows by applying 
Theorem~\ref{hypocontprod} to $\Gamma_1=\emptyset$
and $\Gamma_2=\Gamma$. 
Equation \eqref{Gammaprod} shows that the wave front
set of the product is in $\Gamma$.
\end{proof}

\subsection{The push--forward as a consequence of the pull--back theorem.}
\begin{thm}\label{pushforwardthm}
Let $\Omega_1\subset \mathbb{R}^{d_1}$ and $\Omega_2\subset \mathbb{R}^{d_2}$
be two open sets and $\Gamma$ a closed cone
in $\dotT^*\Omega_1$. For any smooth map $f:\Omega_1
\to \Omega_2$ and any closed subset 
$C$ of $\Omega_1$ such that 
$f|_C: C\to \Omega_2$ is proper and $ \underline{\pi}(\Gamma)\subset C$,
then $f_*$ is continuous in the normal topology 
from
$\{u\in \calD'_\Gamma\telque \supp u \subset 
C\}$ to $\calD'_{f_*{\Gamma}}$,
where 
$f_*\Gamma=\{(y;\eta)\in \dotT^*\Omega_2\telque
\exists x\in \Omega_1\text{ with } y=f(x)\text{ and }
(x;\eta\circ df_x)\in \Gamma \cup \supp u\times \{0\}\}$.
\end{thm}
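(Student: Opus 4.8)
The plan is to derive this from the pull--back theorem by duality, using the criterion of Theorem~\ref{charcont} for continuity of a linear map valued in a space $\calD'_\Gamma$. Write $E=\{u\in\calD'_\Gamma(\Omega_1)\telque\supp u\subset C\}$, a linear subspace of $\calD'_\Gamma(\Omega_1)$ with the induced normal topology, and recall that the push--forward is the adjoint of the pull--back of smooth functions: $\langle f_*u,v\rangle=\langle u,f^*v\rangle=\langle u,v\circ f\rangle$ for $v\in\calD(\Omega_2)$. By Theorem~\ref{charcont} applied to $f_*\colon E\to\calD'_{f_*\Gamma}(\Omega_2)$, it suffices to show that for every equicontinuous set $B\subset\calE'_\Lambda(\Omega_2)$, where $\Lambda=\left((f_*\Gamma)'\right)^c$, the seminorm $u\mapsto\sup_{v\in B}|\langle f_*u,v\rangle|$ is continuous on $E$.

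First I would unpack $B$ using Lemma~\ref{charequilem}: the elements of $B$ are supported in a fixed compact set $K_2\subset\Omega_2$, and $B$ is a bounded subset of $\calD'_\Xi(\Omega_2)$ for some closed cone $\Xi\subset\Lambda$ with $\underline\pi(\Xi)\subset K_2$. Since $f|_C$ is proper, $L:=C\cap f^{-1}(K_2)$ is compact; I fix a relatively compact open neighbourhood $V$ of $L$ in $\Omega_1$ and a function $\chi\in\calD(V)$ with $\chi\equiv1$ on a neighbourhood of $L$. For $u\in E$ and $v\in B$ one has $\supp(f^*v)\subset f^{-1}(K_2)$, hence $\supp u\cap\supp(f^*v)\subset L\subset\{\chi=1\}$, so that $\langle f_*u,v\rangle=\langle u,\chi\,f^*v\rangle$; this is immediate for test functions $v$ supported in $K_2$ and extends to all of $B$ because both sides are continuous in $v$ (the pairing $\langle f_*u,\cdot\rangle$ restricts continuously to $\calE'_\Xi(\Omega_2)$ once one knows $\Xi'\cap f_*\Gamma=\emptyset$) and $\calD(\Omega_2)$ is dense.

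The heart of the matter is then to show that $\chi\,f^*(B)$ is an equicontinuous subset of $\calE'_{\Lambda_1}(\Omega_1)$, $\Lambda_1=(\Gamma')^c$, which is the continuous dual of $\calD'_\Gamma(\Omega_1)$. Two wave front conditions must be checked. (a) $N_{f|_{V}}\cap\Xi=\emptyset$: if $(f(x);\eta)\in\Xi$ with $x\in C$ and $\eta\circ df_x=0$, then $(f(x);\eta)$ lies in the push--forward of $C\times\{0\}$, hence in $f_*\Gamma$; as this set is symmetric under $\eta\mapsto-\eta$ we get $(f(x);\eta)\in(f_*\Gamma)'$, contradicting $\Xi\subset\Lambda$. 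Using that $L$ is compact, $\Xi$ closed and $df$ continuous, a limiting argument lets me shrink $V$ so that this holds for every $x\in V$; then $f|_V^*\colon\calD'_\Xi(\Omega_2)\to\calD'_{f^*\Xi}(V)$ is well defined and continuous by Proposition~\ref{pullbackprop}, and so $\chi\,f^*(B)$, being the image of a bounded set under $f|_V^*$ followed by multiplication by $\chi$ (Corollary~\ref{prodsmooth}), is bounded in $\calD'_{f^*\Xi}(\Omega_1)$, supported in $\supp\chi$, with wave front sets contained in the closed cone $\Xi_1:=(f^*\Xi)\cap(\supp\chi\times\bbR^{d_1})$, for which $\underline\pi(\Xi_1)\subset\supp\chi$. (b) $\Xi_1\subset\Lambda_1$: if $(x;\zeta)\in f^*\Xi$, say $\zeta=\eta\circ df_x$ with $(f(x);\eta)\in\Xi$, then $(f(x);-\eta)\notin f_*\Gamma$, and taking the point $x$ itself in the definition of $f_*\Gamma$ gives $(x;-\eta\circ df_x)=(x;-\zeta)\notin\Gamma$, i.e. $(f^*\Xi)'\cap\Gamma=\emptyset$. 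By the converse direction of Lemma~\ref{charequilem}, $\chi\,f^*(B)$ is equicontinuous in $\calE'_{\Lambda_1}(\Omega_1)$.

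It follows that $\sup_{v\in B}|\langle f_*u,v\rangle|=\sup_{w\in\chi f^*(B)}|\langle u,w\rangle|=p_{\chi f^*(B)}(u)$ is, by definition of equicontinuity, a continuous seminorm of $\calD'_\Gamma(\Omega_1)$, hence of $E$, and Theorem~\ref{charcont} then yields continuity of $f_*$ for the normal topology. I expect step~(a) to be the \emph{main obstacle}: one must ensure that the distributions in the equicontinuous set $B$ can really be pulled back over the region where the pairing with $u$ lives, i.e. that $N_f$ avoids $\Xi$ over a whole neighbourhood of $L$. This is precisely the point where the term $\supp u\times\{0\}$ (equivalently $C\times\{0\}$) in the definition of $f_*\Gamma$ is indispensable, and where properness of $f|_C$ is used — both to make $L$ compact and to run the shrinking argument for $V$.
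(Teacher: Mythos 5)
Your proof is correct and follows essentially the same strategy as the paper's: view the push--forward as the adjoint of the pull--back, invoke the equicontinuity criterion of Theorem~\ref{charcont}, and reduce to showing that $\chi\,f^*(B)$ is equicontinuous in $\calE'_{(\Gamma')^c}(\Omega_1)$ by translating the hypothesis $\Lambda'\cap f_*\Gamma=\emptyset$ into the two wave front conditions $f^*\Lambda\cap\underline{0}=\emptyset$ (legitimacy of the pull--back of $v$) and $f^*\Lambda\cap\Gamma'=\emptyset$ (legitimacy of the resulting pairing with $u$). Your version is in fact somewhat more careful than the paper's on one point of detail: the paper asks for $\chi\in\calD(\mathbb{R}^{d_1})$ with $\chi|_C=1$, which cannot literally hold when $C$ is non-compact, whereas you correctly localize first to the compact set $L=C\cap f^{-1}(K_2)$ (compact by properness of $f|_C$) and take $\chi\equiv1$ only on a neighbourhood of $L$; you also make explicit the passage through a cone $\Xi\subset\Lambda$ furnished by Lemma~\ref{charequilem} and the shrinking of $V$ so that $N_{f|_V}\cap\Xi=\emptyset$, steps the paper treats implicitly. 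These are small refinements rather than a different route; both proofs buy the theorem from the continuity of the pull--back via the duality $\calD'_\Gamma\leftrightarrow\calE'_{(\Gamma')^c}$.
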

\begin{proof}
The idea of the proof is to think of a push--forward as the adjoint of 
a pull--back~\cite{Duistermaat}.
For all $B$ equicontinuous
in $\mathcal{E}^\prime_{\Lambda}$
where $\Lambda=f_*\Gamma^{\prime,c}$,
for all $v\in B, \text{supp }\left(f^* v\right)\cap C$
is contained in a fixed compact set $K$ 
since $f$ is proper on $C$. 
Hence, for any $\chi\in\mathcal{D}(\mathbb{R}^d)$ such that $\chi|_C=1$
and $f$ is proper on $\text{supp }\chi$, 
we should have at least formally
$\left\langle f_*u, v \right\rangle=\left\langle  u,\chi f^* v \right\rangle$
if the duality pairings make sense.
On the one hand, 
if $v\in\mathcal{E}^\prime_\Lambda(\Omega_2)$
where
$f^*\Lambda$ 
does not meet the zero section 
$\underline{0}\subset T^*\Omega_1$
then
the pull--back $f^*v$ 
would be well defined by the pull--back 
Proposition 
(\ref{pullbackprop})
(which is equivalent to the fact that 
$N_f\cap \Lambda=\emptyset$).
On the other hand, the duality pairing 
$\left\langle u , \chi(f^* v) \right\rangle$ 
is well defined if 
$f^*\Lambda\cap \Gamma^\prime=\emptyset$. 
Combining
both conditions leads to the
requirement that 
$f^*\Lambda\cap \left(\Gamma^\prime\cup \underline{0}\right)=\emptyset$.
But 
note that:
\begin{eqnarray*}
&&\left(f^*\Lambda\right)\cap 
\left(\Gamma^\prime\cup \underline{0}\right)=\emptyset \\
&\Leftrightarrow & 
\{(x;\eta\circ df) | (f(x);\eta)\in\Lambda,(x;\eta\circ df)\in \Gamma^\prime\cup\{0\}\}=\emptyset\\
&\Leftrightarrow &
 (f(x);\eta)\in\Lambda\implies (x;\eta\circ df)\notin \Gamma^\prime\cup\{0\}\\
 &\Leftrightarrow &
(f(x);\eta)\in\Lambda^\prime\implies (x;\eta\circ df)\notin \Gamma\cup\{0\}.
\end{eqnarray*}
which
is equivalent to the fact that $\Lambda^\prime$ 
does not meet
$f_*\Gamma=\{(f(x);\eta)\telque (x;\eta\circ df) 
\in \left(\Gamma\cup \underline{0}\right),\eta\neq 0 \}
\subset T^*\Omega_2$
which is exactly the
assumption of our theorem.
Therefore, the set of distributions
$\chi f^* B$ is supported in a fixed compact
set, 
bounded in 
$\mathcal{D}^\prime_{f^*\Lambda}(\Omega_1)$
by the pull--back Proposition \ref{pullbackprop} 
applied to $f^*$,
the duality pairings are well defined and:
$\sup_{v\in B}\vert \left\langle f_* u, v \right\rangle\vert=
\sup_{v\in B^\prime}\vert \left\langle u, v \right\rangle\vert $
where $B^\prime=\chi f^* B$ is equicontinuous in $\mathcal{E}^\prime_{\Gamma^{\prime,c}}(\Omega_1)$ (the support of the distribution is compact because $f$
is proper) which means
that $\sup_{v\in B^\prime}\vert \left\langle u, v \right\rangle\vert $ is 
a continuous seminorm
for 
the normal topology of 
$\mathcal{D}_\Gamma^\prime(\Omega_1)$.
\end{proof}
We state and prove
a parameter version
of the push--forward theorem
\begin{thm}\label{pshfwdenfamille}
Let $\Omega_1\subset \mathbb{R}^{d_1}$ 
and $\Omega_2\subset \mathbb{R}^{d_2}$
be two open sets, $A\subset U\subset \mathbb{R}^n$ 
where $A$
is
compact, $U$ is open and $\Gamma$ a closed cone
in $\dotT^*\Omega_1$. For any smooth map $f:\Omega_1\times U
\to \Omega_2$ and any closed subset 
$C$ of $\Omega_1$ such that 
$f: C\times A\to \Omega_2$ is proper 
and $\underline{\pi}(\Gamma)\subset C$,
then $f(.,a)_{*}$ is uniformly continuous  in the normal topology 
from
$\{u\in \calD'_\Gamma\telque \supp u \subset 
C\}$ to $\calD'_{\Xi}$,
where $\Xi=\cup_{a\in A }f(.,a)_{*}\Gamma$.
\end{thm}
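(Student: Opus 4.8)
The plan is to run the arguments of Theorems \ref{pushforwardthm} and \ref{pbenfamille} at once: I would view $f(.,a)_*$ as the adjoint of the pull--back $f(.,a)^*$ and reduce everything to the equicontinuity of the family of Schwartz kernels $(I_{f(.,a)})_{a\in A}$ supplied by Proposition \ref{Ifaequicontinuous}. By Theorem \ref{charcont} it suffices to prove that for every set $B$ equicontinuous in $\calE'_{\Xi^{\prime,c}}(\Omega_2)$ the family of linear functionals
\[
\rho_{v,a}:u\longmapsto \langle f(.,a)_*u,v\rangle,\qquad (v,a)\in B\times A,
\]
is equicontinuous on $\{u\in\calD'_\Gamma(\Omega_1)\telque \supp u\subset C\}$; since an equicontinuous family of linear maps between locally convex spaces is equi--uniformly continuous, this is exactly the uniform continuity of $\bigl(f(.,a)_*\bigr)_{a\in A}$. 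Along the way I would check that $\Xi=\bigcup_{a\in A}f(.,a)_*\Gamma$ is closed, by a compactness argument in the spirit of Lemma \ref{Gammaclosed} using the compactness of $A$ and the properness of $f$ on $C\times A$ to keep base points in a fixed compact set.

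First I would fix the cut--offs. By Lemma \ref{charequilem} there are a compact set $K_2\subset\Omega_2$ and a closed cone $\Xi_2\subset\Xi^{\prime,c}$ (equivalently $\Xi_2'\cap\Xi=\emptyset$) such that $\supp v\subset K_2$, $\WF(v)\subset\Xi_2$ for all $v\in B$ and $B$ is bounded in $\calD'_{\Xi_2}(\Omega_2)$. Since $f|_{C\times A}$ is proper, $\{x\in C\telque \exists a\in A,\ f(x,a)\in K_2\}$ lies in a compact subset of $\Omega_1$; I would pick $\chi\in\calD(\Omega_1)$ equal to $1$ near it and $\varphi\in\calD(\Omega_2)$ equal to $1$ near $K_2$. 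Because $N_{f(.,a)}$ is symmetric and is contained in the $\underline 0$--part of $\Xi$, the condition $\Xi_2'\cap\Xi=\emptyset$ gives $N_{f(.,a)}\cap\Xi_2=\emptyset$, so $f(.,a)^*(\varphi v)=f(.,a)^*v$ is defined for each $a$; and since $\Xi_2'\cap f(.,a)_*\Gamma\subset\Xi_2'\cap\Xi=\emptyset$, the chain of equivalences in the proof of Theorem \ref{pushforwardthm} gives $f(.,a)^*\Xi_2\cap(\Gamma'\cup\underline 0)=\emptyset$, so that $\langle f(.,a)_*u,v\rangle=\langle f(.,a)^*(\varphi v),\chi u\rangle$ is legitimate for $\supp u\subset C$. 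By the kernel identity \eqref{fsuphiv2} this equals $\langle u\otimes v,\ I_{f(.,a)}\rangle$ with $I_{f(.,a)}(x,y)=\chi(x)\varphi(y)(2\pi)^{-d_2}\int_{\bbR^{d_2}}e^{i\theta\cdot(f(x,a)-y)}\,d\theta$, the kernel of Proposition \ref{Ifaequicontinuous}.

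The remaining steps transcribe the end of the proofs of Proposition \ref{pullbackprop} and Theorem \ref{pbenfamille}. Set $\Gamma_\otimes=(\Gamma\times\Xi_2)\cup\bigl((\Omega_1\times\{0\})\times\Xi_2\bigr)\cup\bigl(\Gamma\times(\Omega_2\times\{0\})\bigr)$ and $\Lambda_\otimes=\Gamma_\otimes^{\prime,c}$. By Theorem \ref{hypotens} the tensor product is hypocontinuous $\calD'_\Gamma\times\calD'_{\Xi_2}\to\calD'_{\Gamma_\otimes}$; by Proposition \ref{Ifaequicontinuous} the family $(I_{f(.,a)})_{a\in A}$ is equicontinuous in $\calE'_{\Lambda_\otimes}(\Omega_1\times\Omega_2)$, provided one has $\WF(I_{f(.,a)})'\cap\Gamma_\otimes=\emptyset$ for all $a\in A$; by Theorem \ref{thmdualityequi} the evaluations $w\mapsto\langle w,I_{f(.,a)}\rangle$, $a\in A$, then form an equicontinuous family on $\calD'_{\Gamma_\otimes}$; and composing this equicontinuous family of continuous maps with the hypocontinuous tensor product (Lemma \ref{hypocontcont}, with the neighbourhood it produces taken uniform over the family) shows $(u,v)\mapsto\langle u\otimes v,I_{f(.,a)}\rangle=\rho_{v,a}(u)$ to be an equicontinuous--in--$a$ family of hypocontinuous maps. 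Because $B$ is bounded in $\calD'_{\Xi_2}$, item (i) of Definition \ref{hypodef} applied uniformly in $a$ yields the equicontinuity of $(\rho_{v,a})_{(v,a)\in B\times A}$, which is what we wanted.

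The one genuinely new point, and the place where I would be most careful, is the wave front computation $\WF(I_{f(.,a)})'\cap\Gamma_\otimes=\emptyset$, uniform in $a\in A$. It is the push--forward analogue of the three bullet points at the end of the proof of Proposition \ref{pullbackprop}: with $\WF(I_{f(.,a)})'\subset\{(x,f(x,a);\theta\circ d_xf(x,a),-\theta)\telque \theta\neq 0\}$, the piece $\Gamma\times(\Omega_2\times\{0\})$ is unreachable since it would force $\theta=0$, whereas for $\Gamma\times\Xi_2$ and $(\Omega_1\times\{0\})\times\Xi_2$ a common point would have $(x;\theta\circ d_xf(x,a))\in\Gamma\cup\underline 0$, hence $(f(x,a);\theta)\in f(.,a)_*\Gamma\subset\Xi$, hence $(f(x,a);-\theta)\in\Xi'$, contradicting both $(f(x,a);-\theta)\in\Xi_2$ and $\Xi_2\cap\Xi'=\emptyset$ --- and it is exactly the $\underline 0$ (i.e.\ $\supp u\times\{0\}$) contribution to $f(.,a)_*\Gamma$ that kills the second case. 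I expect this bookkeeping, together with verifying that the compact sets and neighbourhoods can be chosen independently of $a$, to be the only real obstacle; everything else repeats the unparametrized arguments verbatim.
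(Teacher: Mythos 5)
Your proposal is correct and is essentially the paper's argument: the paper also first verifies that $\Xi$ is closed using properness and compactness of $A$ (exactly as you sketch, in the spirit of Lemma \ref{Gammaclosed}), and then writes $\langle f(.,a)_*u,v\rangle=\langle u,\chi f(.,a)^*v\rangle$ and shows $B'=\{\chi f(.,a)^*v \telque a\in A, v\in B\}$ is equicontinuous in $\calE'_{\Gamma^{\prime,c}}(\Omega_1)$. The only stylistic difference is that the paper cites Theorem \ref{pbenfamille} (the parametrized pull--back) as a black box to obtain the equicontinuity of $B'$, whereas you inline the kernel mechanism: you go directly to $\langle u\otimes v, I_{f(.,a)}\rangle$, invoke Proposition \ref{Ifaequicontinuous}, Theorem \ref{hypotens}, Theorem \ref{thmdualityequi}, and Lemma \ref{hypocontcont}, and redo the three-bullet wave-front bookkeeping. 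This is not a genuinely different route --- the parametrized pull--back theorem is proved by precisely the kernel argument you spell out --- but your self-contained presentation does make explicit the role of the $\supp u\times\{0\}$ contribution in killing the $(\Omega_1\times\{0\})\times\Xi_2$ piece of $\Gamma_\otimes$, which is left slightly implicit in the paper. One small caution, shared with the paper: when you argue $N_{f(.,a)}\cap\Xi_2=\emptyset$, what actually follows from $\Xi_2'\cap\Xi=\emptyset$ is that the conormal of $f(.,a)$ restricted to base points over $C$ avoids $\Xi_2$; this is enough because only $\chi f(.,a)^*(\varphi v)$ intervenes and $\supp\chi$ can be taken in an arbitrarily small neighbourhood of the relevant compact subset of $C$, but it is worth saying so.
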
 
\begin{proof}
We have to check that 
$\Xi$ is closed over 
$\dot{T}^*_{f(C\times A)}\Omega_2$.
Let $(y;\eta)\in \overline{\Xi}
\cap \dot{T}^*_{f(C\times A)}\Omega_2$
then there exists a 
sequence $(y_n;\eta_n)\rightarrow (y;\eta)$
such that 
$(y_n;\eta_n)\in\Xi\cap 
\dot{T}^*_{f(C\times A)}\Omega_2$.
By definition,
$y_n=f(x_n,a_n)$
where 
$(x_n,\eta_n\circ d_x f(x_n,a_n))
\in\Gamma\cup \underline{0},
(x_n,a_n)\in C\times A$.
The central observation is that
$\overline{\{y_n| n\in\mathbb{N} \}}
 \subset f(C\times A)  $
and 
$\overline{\{(x_n,a_n)| f(x_n,a_n)=y_n,n\in\mathbb{N} \}}
\subset C\times A$
are compact sets because
$f$ is proper on $C\times A$.
Then we can extract 
convergent subsequences
$(x_n,a_n)\rightarrow (x,a)$
and $(x,\eta\circ d_x f(x,a))
\in\Gamma\cup \underline{0}$
since $\Gamma\cup \underline{0}$
is closed in $\dot{T}^*\Omega_2$
and $d_xf$
is continuous. 
By
definition of $\Xi$, this proves
that $(y;\eta)\in \Xi$
and we can 
conclude that
$\Xi$ is closed.
Now we can repeat 
the proof of the 
push--forward theorem
except
that we use 
the pull--back 
theorem
with parameters.
Let $B$ be 
equicontinuous
in $\mathcal{E}_{\Xi^{\prime,c}}^\prime(\Omega_2)$
hence all elements of $B$ have
support contained in
some compact.
We have
$\forall v\in B, \text{supp }(f^*v)\cap C\times A$
is compact, therefore
for $\chi=1$ on $\cup_{a\in A} \text{supp }(f(.,a)^*v)\cap C$,
the family $B^\prime=\{(\chi f(.,a)^*v)| a\in A,v\in B \}$
is equicontinuous in 
$\mathcal{E}^\prime_{\Theta}$
where $\Theta=\Gamma^{\prime,c}$
by the
parameter version of
the pull--back theorem.
Therefore,
$u\mapsto \sup_{a\in A} 
\sup_{v\in B}\vert \left\langle f(.,a)_{*} u, v \right\rangle\vert=\sup_{v\in B^\prime}\vert \left\langle u, v \right\rangle\vert$
is continuous in $u$
since the right hand term
is a 
continuous seminorm
for the normal
topology of
$\mathcal{D}^\prime_{\Gamma}(\Omega_1)$. 
\end{proof}

\paragraph{Convolution product.}
In the same spirit as for the multiplication
of distributions, 
the convolution product $u*v$ can be described as the
composition of the
tensor product $(u,v)\mapsto u\otimes v$
with the push--forward by the
map $\Sigma:=(x,y) \mapsto (x+y)$. 
For a closed subset $X\subset \mathbb{R}^n$,
let $\mathcal{D}_\Gamma^\prime(X)$ be the set
of distributions supported in $X$ with wave front
in $\Gamma$.
Therefore, we have the 
\begin{thm}\label{convolprod}
Let  
$\Gamma_1,\Gamma_2$ be
two closed conic sets in $\dot{T}^*\mathbb{R}^n$ and 
$X_1,X_2$
two closed subsets of $\mathbb{R}^n$
such that $\Sigma:X_1\times X_2 \mapsto \mathbb{R}^n$
is proper.
Then the convolution product
of distributions is hypocontinuous
from\\ $\mathcal{D}^\prime_{\Gamma_1}(X_1)\times\mathcal{D}^\prime_{\Gamma_2}(X_2)$
to $\mathcal{D}^\prime_\Gamma(X_1+X_2)$
where
\begin{eqnarray}
\Gamma=\{(x+y;\eta) \telque (x;\eta)\in\Gamma_1,(y;\eta)\in\Gamma_2 \}.
\end{eqnarray}
\end{thm}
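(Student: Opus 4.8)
The plan is to follow exactly the pattern already used in this section for the multiplication and the push-forward of distributions: realise the convolution as a composition of operations whose regularity is known. Following Duistermaat--Kolk~\cite[p.~121]{Duistermaat-10} one writes $u*v=\Sigma_*(u\otimes v)$ with $\Sigma\colon(x,y)\in\mathbb{R}^n\times\mathbb{R}^n\mapsto x+y\in\mathbb{R}^n$. By Theorem~\ref{hypotens} the tensor product is hypocontinuous for the normal topology, by Theorem~\ref{pushforwardthm} the push-forward along $\Sigma$ is continuous, and by Lemma~\ref{hypocontcont} the composition of a hypocontinuous bilinear map with a continuous linear map is hypocontinuous; assembling these three facts is the entire argument.

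In detail, I would first note that Theorem~\ref{hypotens} makes $(u,v)\mapsto u\otimes v$ hypocontinuous from $\calD'_{\Gamma_1}\times\calD'_{\Gamma_2}$ to $\calD'_{\Gamma_\otimes}$, where $\Gamma_\otimes=(\Gamma_1\times\Gamma_2)\cup((\mathbb{R}^n\times\{0\})\times\Gamma_2)\cup(\Gamma_1\times(\mathbb{R}^n\times\{0\}))$; since $\calD'_{\Gamma_1}(X_1)$ and $\calD'_{\Gamma_2}(X_2)$ are closed subspaces of $\calD'_{\Gamma_1}$ and $\calD'_{\Gamma_2}$, the restriction stays hypocontinuous, and its image consists of distributions supported in the closed set $X_1\times X_2$. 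On that image the wave front set is in fact contained in $\hat\Gamma_\otimes:=\Gamma_\otimes\cap\underline{\pi}^{-1}(X_1\times X_2)$, a closed cone with $\underline{\pi}(\hat\Gamma_\otimes)\subset X_1\times X_2$. Since $\Sigma$ is proper on $X_1\times X_2$ by hypothesis, Theorem~\ref{pushforwardthm} (with $C=X_1\times X_2$ and the cone $\hat\Gamma_\otimes$) then gives that $\Sigma_*$ is continuous from $\{w\in\calD'_{\hat\Gamma_\otimes}\telque \supp w\subset X_1\times X_2\}$ to $\calD'_{\Sigma_*\hat\Gamma_\otimes}(X_1+X_2)$, with $X_1+X_2=\Sigma(X_1\times X_2)$ closed by properness. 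A one-line computation using $\eta\circ d\Sigma_{(x,y)}=(\eta,\eta)$ shows that $\Sigma_*\hat\Gamma_\otimes$ is contained in $\Gamma=\{(x+y;\eta)\telque (x;\eta)\in\Gamma_1,\ (y;\eta)\in\Gamma_2\}$, the two pieces of $\Gamma_\otimes$ carrying a zero covector contributing nothing because they would force $\eta=0$; and $\Gamma$ is closed, again because $\Sigma$ is proper on $X_1\times X_2$. Composing via Lemma~\ref{hypocontcont} then yields the claimed hypocontinuity into $\calD'_\Gamma(X_1+X_2)$.

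The step that requires genuine care — and which I expect to be the main obstacle — is the passage to $\hat\Gamma_\otimes$: the hypothesis $\underline{\pi}(\Gamma)\subset C$ of Theorem~\ref{pushforwardthm} is false for $\Gamma_\otimes$ itself with $C=X_1\times X_2$ (take $\Gamma_1=\dotT^*\mathbb{R}^n$), which is why one must intersect with $\underline{\pi}^{-1}(X_1\times X_2)$; but then one must check that the tensor product is still hypocontinuous into $\calD'_{\hat\Gamma_\otimes}$ equipped with its own normal topology, which is a priori strictly finer than the one induced from $\calD'_{\Gamma_\otimes}$ because it has more admissible seminorms $||\cdot||_{N,V,\chi}$. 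This is true because, for a distribution $w$ supported in $X_1\times X_2$ and any $\chi\in\calD$, one has $w\chi=w(\chi\rho)$ whenever $\rho$ equals $1$ on a neighbourhood of the compact set $(X_1\times X_2)\cap\supp\chi$; since $\Gamma_\otimes$ is closed, if the support of $\rho$ is chosen small enough then any seminorm $||\cdot||_{N,V,\chi}$ admissible for $\hat\Gamma_\otimes$ turns, after this replacement, into a genuine seminorm $||\cdot||_{N,V,\chi\rho}$ of $\calD'_{\Gamma_\otimes}$. Hence the two subspace topologies coincide on distributions supported in $X_1\times X_2$, the restricted tensor product is hypocontinuous into $\calD'_{\hat\Gamma_\otimes}$, and the composition argument goes through.
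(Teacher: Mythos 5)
Your proof follows exactly the paper's route: the paper's entire proof is the single sentence that the convolution is the composition of the hypocontinuous tensor product (Theorem~\ref{hypotens}) with the continuous push-forward (Theorem~\ref{pushforwardthm}), closed off by Lemma~\ref{hypocontcont}.

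What you have added, correctly, is the resolution of a point the paper's one-line proof silently elides. As you observe, $\Gamma_\otimes$ does not in general satisfy the hypothesis $\underline{\pi}(\Gamma_\otimes)\subset X_1\times X_2$ of Theorem~\ref{pushforwardthm}, so one is obliged to pass to $\hat\Gamma_\otimes=\Gamma_\otimes\cap\underline{\pi}^{-1}(X_1\times X_2)$, and then one must check that the normal topology of $\calD'_{\hat\Gamma_\otimes}$, which a priori has strictly more admissible seminorms $\|\cdot\|_{N,V,\chi}$, coincides on $\{w\telque \supp w\subset X_1\times X_2\}$ with the topology induced by $\calD'_{\Gamma_\otimes}$. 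Your cut-off argument is sound: writing $K=(X_1\times X_2)\cap\supp\chi$, the hypothesis $(\supp\chi\times V)\cap\hat\Gamma_\otimes=\emptyset$ forces $(K\times V)\cap\Gamma_\otimes=\emptyset$, and since $K\times (V\cap S^{2n-1})$ is compact and $\Gamma_\otimes$ is closed and conic, a small $\delta$-enlargement $K_\delta$ still avoids $\Gamma_\otimes$; choosing $\rho$ supported in $K_\delta$ and equal to $1$ near $K$ gives $w\chi=w\chi\rho$ and turns $\|\cdot\|_{N,V,\chi}$ into the genuine $\Gamma_\otimes$-seminorm $\|\cdot\|_{N,V,\chi\rho}$. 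The subsequent observations — that $\eta\circ d\Sigma_{(x,y)}=(\eta,\eta)$ kills the two factors of $\Gamma_\otimes$ carrying a zero covector, and that properness of $\Sigma$ on $X_1\times X_2$ makes both $X_1+X_2$ and $\Sigma_*\hat\Gamma_\otimes$ closed — are also correct. So your proof reaches the same conclusion by the same decomposition, but with strictly more care than what appears in the paper.
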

\begin{proof}
The convolution product of distribution is the composition of the hypocontinuous
tensor product with the continuous push--forward.
\end{proof}

As an application of the parameter version 
of the push-forward theorem, we can state
the coordinate invariant definition of the
wavefront set, which was proposed by
Duistermaat~\cite[p.~13]{Duistermaat}, correcting
a first attempt by Gabor~\cite{Gabor-72}.
Its proof is left to the reader.
\begin{thm}\label{duistermaatWF}
Let $\Omega\subset\mathbb{R}^d$ be an open set, 
$u\in\mathcal{D}^\prime(\Omega)$.
An element $(x;\xi)\notin WF_{D}(u)$ if and only if
for all $f\in C^\infty(\Omega\times \mathbb{R}^n)$ 
such that
$d_xf(x,a_0)=\xi$ for some
$a_0\in \mathbb{R}^n$, there exists 
some neighborhoods
$A$ of $a_0$ and $U$ of $x_0$
such that for all
$\varphi\in\mathcal{D}(U)$:
$\vert\left\langle u,\varphi e^{i\tau f(.,a)} \right\rangle\vert
=O(\tau^{-\infty})$
uniformly in some 
neighborhood
of $a_0$ in $A$.
\end{thm}

\section{Appendix: Technical results}
This appendix gather different useful results.
Several of them are folkore results for which we could
find no proof in the literature.

\subsection{Exhaustion of the complement of $\Gamma$}
\label{exhaustionsubsec}
To prove that $\mathcal{D}^\prime_\Gamma$
is nuclear~\cite{Dabrouder-13}, we need to take the additional seminorms
in a countable set:
the complement $\Gamma^c=\dotT^*M\backslash\Gamma$
of any closed cone $\Gamma\subset \dotT^*M$
can be exhausted by a countable set of products
$U\times V$, where $U\subset M$ is compact and $V$ is a
closed conic subset
of $\bbR^n$.

First we introduce the sphere bundle over $M$ (or unit cotangent bundle)
$UT^*M=\{(x;k)\in T^*M \telque |k|=1\}$.
We then define the set 
$U\Gamma_K=\{(x;k)\in \Gamma\telque x\in K, |k|=1\} = UT^*M|_K\cap \Gamma$, for any compact
subset $K\subset M$. Since $M$ can be covered by a countable
union of compact sets, we assume without loss of generality
that $K$ is covered by a single
chart $(U,\psi)$ such that $\psi(K)\subset Q$, 
where $Q=[-1,1]^n$ is an $n$-dimensional
cube. We hence can assume w.l.g. that $M=\mathbb{R}^n$ and $UT^*M = \mathbb{R}^n\times S^{n-1}$.
It will be convenient to use the norm $d_\infty$ on $\mathbb{R}^n$, defined by: $\forall x,y\in \mathbb{R}^n$, $d_\infty(x,y):=
\sup_{1\leq i\leq n}|x_i-y_i|$. We denote
by $\overline{B}_\infty(x,r) =\{y\in \mathbb{R}^n\telque d_\infty(x,y)\leq r\}$ the closed
ball of radius $r$ for this norm.
We also denote the restriction of $d_\infty$ to $S^{n-1}\times S^{n-1}$ by the same letter and,
lastly, for $(x;\xi),(y;\eta)\in UT^*\mathbb{R}^n$ we set 
$d_\infty((x;\xi),(y;\eta)) = \sup (d_\infty(x,y),d_\infty(\xi,\eta))$.

We define cubes centered at rational points in $Q$:
let $q_{j}=[-\frac{1}{2^j},\frac{1}{2^j}]^n= \overline{B}_\infty(0,2^{-j})$ and 
$q_{j,m}=\frac{m}{2^j}+q_j = \overline{B}_\infty(2^{-j}m,2^{-j})$, where $m\in \mathbb{Z}^n \cap 2^j Q$.
In other words, the center of $q_{j,m}$ runs over a hypercubic lattice with
coordinates $(2^{-j}m_1,\dots,2^{-j}m_n)$, where
$-2^j \le m_i \le 2^j$. Note that, for each non-negative integer $j$,
the hypercubes $q_{j,m}$ overlap and cover $Q$:
\begin{eqnarray}
Q &\subset & \bigcup_{m\in \bbZ^n\cap 2^jQ} q_{jm}.
\label{QUqjm}
\end{eqnarray}

Denote by $\underline{\pi}:UT^*\mathbb{R}^n\longrightarrow \mathbb{R}^n$
and $\overline{\pi}:UT^*\mathbb{R}^n\longrightarrow S^{n-1}$
the projection maps defined by $\underline{\pi}(x;k)=x$ and $\overline{\pi}(x;k) = k$.
We define $F_{j,m}=\underline{\pi}^{-1}(q_{j,m})\simeq q_{j,m}\times S^{n-1}$
(see Fig.~\ref{figexhaustion}).
The set $\overline{\pi}(F_{j,m}\cap U\Gamma_K)$ is compact because the projection
$\overline{\pi}$ is continuous and $F_{j,m}\cap U\Gamma_K$ is compact.
For any positive integer $\ell$, define the compact set
$C_{j,m,\ell} = \big\{\eta\in S^{n-1}\telque 
d_\infty\big(\overline{\pi}(F_{j,m}\cap U\Gamma_K),\eta\big) \ge
1/\ell\big\}$.
This is the set of points of the sphere which are at least
at a distance $1/\ell$ from the projection of
the slice of $U\Gamma$ inside 
$F_{j,m}$ (see fig.~\ref{figexhaustion}).
We have
\begin{eqnarray*}
\bigcup_{\ell>0} C_{j,m,\ell} &=& S^{n-1}\backslash \overline{\pi}(F_{j,m}\cap
U\Gamma_K).
\end{eqnarray*}
Indeed, by definition, any element of $C_{j,m,\ell}$ is
in $S^{n-1}$ and not in $\overline{\pi}(F_{j,m}\cap U\Gamma_K)$,
conversely, the compactness of $\overline{\pi}(F_{j,m}\cap U\Gamma_K)$ implies
that any point $(x;\xi)$ in $S^{n-1}\backslash \overline{\pi}(F_{j,m}\cap U\Gamma_K)$ 
is at a finite distance $\delta$ from 
$\overline{\pi}(F_{j,m}\cap U\Gamma_K)$. If we take $\ell > 1/\delta$, 
we have $(x;\xi)\in C_{j,m,\ell}$.
Note that all $C_{j,m,\ell}$ are empty if
$\overline{\pi}(F_{j,m}\cap U\Gamma_K) = S^{n-1}$.
With this notation we can now state
\begin{lem}\label{approxlemma1}
\begin{eqnarray}\label{identity-qK=UmoinsGamma}
\bigcup_{j,m,\ell} q_{j,m}\times C_{j,m,\ell} &=& 
UT^*M|_K\backslash U\Gamma_K,\
\end{eqnarray}
and, by denoting 
$V_{j,m,\ell} = \{k\in \mathbb{R}^n\setminus \{0\}\telque 
k/|k|\in C_{j,m,\ell}\}$,
\begin{eqnarray}\label{identity-qV=TmoinsGamma}
\bigcup_{j,m,\ell} q_{j,m}\times V_{j,m,\ell} &=& 
(T^*M\backslash\Gamma)|_K.
\end{eqnarray}
\end{lem}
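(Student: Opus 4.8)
The plan is to establish the first identity \eqref{identity-qK=UmoinsGamma} by a double inclusion and then to deduce the second one \eqref{identity-qV=TmoinsGamma} from it by homogeneity in the fibres. Throughout we work in the compact metric space $UT^*M|_K\simeq K\times S^{n-1}$ with the product distance $d_\infty((x;\xi),(y;\eta))=\sup(d_\infty(x,y),d_\infty(\xi,\eta))$; we read each $q_{j,m}$ as $q_{j,m}\cap K$ and, in \eqref{identity-qV=TmoinsGamma}, $T^*M\setminus\Gamma$ as $\dotT^*M\setminus\Gamma$ (consistently with the notation $\Gamma^c=\dotT^*M\setminus\Gamma$ of this subsection), so that both sides are exact. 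The essential input is that $U\Gamma_K=\Gamma\cap UT^*M|_K$ is \emph{compact} (because $\Gamma$ is closed and $K$ is compact), hence so is each slice $\overline{\pi}(F_{j,m}\cap U\Gamma_K)$. If $U\Gamma_K=\emptyset$ the statement is trivial: every $C_{j,m,\ell}$ then equals $S^{n-1}$ (with $\inf\emptyset=+\infty$) and both sides reduce to $UT^*M|_K$ by \eqref{QUqjm}; so assume $U\Gamma_K\neq\emptyset$.

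For the inclusion $\supseteq$ in \eqref{identity-qK=UmoinsGamma}, which is the substantive step, fix $(x;\eta)\in UT^*M|_K$ with $(x;\eta)\notin U\Gamma_K$. Compactness of $U\Gamma_K$ gives $3\delta:=d_\infty((x;\eta),U\Gamma_K)>0$. Choose $j$ with $2^{-j}<\delta$; since $x\in K\subseteq Q$, \eqref{QUqjm} provides $m\in\bbZ^n\cap 2^jQ$ with $x\in q_{j,m}$, and then $d_\infty(x,y)\le 2\cdot 2^{-j}<2\delta$ for every $y\in q_{j,m}$. I claim $d_\infty(\overline{\pi}(F_{j,m}\cap U\Gamma_K),\eta)\ge\delta$. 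Indeed, if $(y;\zeta)\in F_{j,m}\cap U\Gamma_K$ satisfied $d_\infty(\zeta,\eta)<\delta$, then since $y\in\underline{\pi}(F_{j,m})=q_{j,m}$ we would get $d_\infty((x;\eta),(y;\zeta))=\sup(d_\infty(x,y),d_\infty(\eta,\zeta))<2\delta<3\delta$, contradicting $d_\infty((x;\eta),U\Gamma_K)=3\delta$. Picking $\ell$ with $1/\ell\le\delta$ we conclude $\eta\in C_{j,m,\ell}$, hence $(x;\eta)\in q_{j,m}\times C_{j,m,\ell}$.

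The reverse inclusion $\subseteq$ is immediate: if $(x;\eta)\in q_{j,m}\times C_{j,m,\ell}$ then $x\in q_{j,m}$, so $(x;\eta)\in\underline{\pi}^{-1}(q_{j,m})=F_{j,m}$; were $(x;\eta)\in U\Gamma_K$ we would have $\eta=\overline{\pi}(x;\eta)\in\overline{\pi}(F_{j,m}\cap U\Gamma_K)$, contradicting $d_\infty(\overline{\pi}(F_{j,m}\cap U\Gamma_K),\eta)\ge 1/\ell>0$. Thus $(x;\eta)\in UT^*M|_K\setminus U\Gamma_K$, and \eqref{identity-qK=UmoinsGamma} is proved.

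Finally, \eqref{identity-qV=TmoinsGamma} follows by scaling. For $k\neq 0$, by definition of $V_{j,m,\ell}$ one has $(x;k)\in q_{j,m}\times V_{j,m,\ell}$ iff $x\in q_{j,m}$ and $(x;k/|k|)\in C_{j,m,\ell}$, i.e. iff $(x;k/|k|)\in q_{j,m}\times C_{j,m,\ell}$; taking the union over $j,m,\ell$ and using \eqref{identity-qK=UmoinsGamma}, the left-hand side of \eqref{identity-qV=TmoinsGamma} is exactly the set of $(x;k)\in\dotT^*M$ with $x\in K$ and $(x;k/|k|)\notin\Gamma$. Since $\Gamma$ is a cone, $(x;k/|k|)\notin\Gamma$ is equivalent to $(x;k)\notin\Gamma$, so this set is $(\dotT^*M\setminus\Gamma)|_K$, as claimed. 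The only delicate point of the whole argument is the metric estimate of the second paragraph, where the base and fibre distances must be combined through the product metric $d_\infty$; this is exactly the mechanism of Step~1 in the proof of Lemma~\ref{lemm0}, and is harmless here.
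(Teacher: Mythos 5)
Your proof is correct and follows essentially the same route as the paper's: the $\subseteq$ inclusion is by direct unpacking of the definition of $C_{j,m,\ell}$, the $\supseteq$ inclusion uses closedness of $U\Gamma_K$ to produce a positive safety distance $\delta$, a dyadic cube $q_{j,m}$ of diameter $<2\delta$ containing $x$, and the product-metric estimate to bound $d_\infty(\overline{\pi}(F_{j,m}\cap U\Gamma_K),\eta)$ below, and the second identity follows from homogeneity. The only (cosmetic) difference is that the paper phrases the metric step via disjointness of the product ball $q_{j,m}\times\overline{B}_\infty(k,\delta)$ from $U\Gamma_K$ and then projects, whereas you argue directly with the distance from $(x;\eta)$ to $U\Gamma_K$; your explicit treatment of the $U\Gamma_K=\emptyset$ case and the readings $q_{j,m}\cap K$ and $\dotT^*M\setminus\Gamma$ are legitimate clarifications of the statement that the paper leaves implicit.
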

\begin{figure}
\begin{center}
\includegraphics[width=10.0cm]{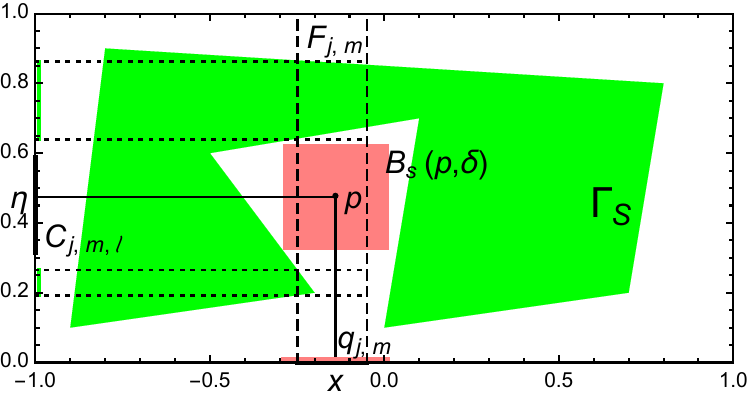}
\caption{Picture of the case $n=2$, where only one dimension
of the cube $[-1,1]^2$ is shown and the circle
$S^1$ is represented by the vertical segment $[0,1]$.
The large surface is $U\Gamma_K$, the small square around $p$ is
the ball $\overline{B}_\infty(p,\delta)$. 
We see that $q_{j,m}$ contains $x$ and is contained
in $\underline{\pi}(\overline{B}_\infty(p,\delta))$.
  \label{figexhaustion}}
\end{center}
\end{figure}
\begin{proof}
We first prove the inclusion $\subset$ in (\ref{identity-qK=UmoinsGamma}). Let 
$(x;k)\in \bigcup_{j,m,\ell} q_{j,m}\times C_{j,m,\ell}$, this means that there exist
$j\in \mathbb{N}$, $m\in \mathbb{Z}^n\cap 2^jQ$ and $\ell\in \mathbb{N}^*$
such that $(x;k)\in q_{j,m}\times C_{j,m,\ell}$. Hence $(x;k)\in F_{j,m}$ and,
by definition of $C_{j,m,\ell}$, $d_\infty(\overline{\pi}(F_{j,m}\cap U\Gamma_K),k)\geq 1/\ell$, which
implies that $(x;k)\notin U\Gamma_K$.

Let us prove the reverse inclusion $\supset$. Let $(x;k)\in UT^*M|_K\setminus U\Gamma_K$. Since this set
is open, there exists some $\delta>0$ such that $\overline{B}_\infty((x;k),\delta)\subset UT^*M|_K\setminus U\Gamma_K$.
Let $j\in \mathbb{N}^*$ s.t. $2^{-j+1}<\delta$.
Because the sets $q_{j,m}$ cover $Q$
(see eq.~\eqref{QUqjm}), there 
is an $m$ such that $x\in q_{jm}$.
Moreover, $\forall y\in q_{j,m}$, 
we have $d_\infty(x,y) \leq d_\infty(x,2^{-j}m) + d_\infty(2^{-j}m,y)\leq 2^{-j}+ 2^{-j}<\delta$,
i.e. $y\in \overline{B}_\infty(x,\delta)$. Hence
 $q_{j,m}\subset \overline{B}_\infty(x,\delta)$.
We deduce that 
\[
q_{j,m}\times \overline{B}_\infty(k,\delta)\subset \overline{B}_\infty(x,\delta)\times \overline{B}_\infty(k,\delta)
= \overline{B}_\infty((x;k),\delta)\subset UT^*M|_K\setminus U\Gamma_K.
\]
This means that $(q_{j,m}\times \overline{B}_\infty(k,\delta)) \cap U\Gamma_K = \emptyset$ or equivalently
$F_{j,m}\cap \overline{\pi}^{-1}(\overline{B}_\infty(k,\delta))\cap U\Gamma_K = \emptyset$. The latter
inclusion implies that $\overline{B}_\infty(k,\delta)\cap \overline{\pi}(F_{j,m}\cap U\Gamma_K) = \emptyset$.
In other words, $d_\infty(k,\overline{\pi}(F_{j,m}\cap U\Gamma_K)) >\delta$. Hence by choosing $\ell\in \mathbb{N}^*$
s.t. $1/\ell \leq \delta$, we deduce that $d_\infty(k,\overline{\pi}(F_{j,m}\cap U\Gamma_K) > 1/\ell$, i.e.
that $k\in C_{j,m,\ell}$. Thus we conclude that $(x;k)\in q_{j,m}\times
C_{j,m,\ell}$ and Eq.~\eqref{identity-qK=UmoinsGamma} is proved. 

To prove (\ref{identity-qV=TmoinsGamma}), we notice that,
because of the conic property of $\Gamma$, each $C_{j,m,\ell}$
corresponds to a unique $V_{j,m,\ell}$. 
\end{proof}

Any nonnegative smooth function
$\psi$ supported on $[-3/2,3/2]^n$ and such that
$\psi(x)=1$ for $x\in [-1,1]^n$ enables us to
define scaled and shifted functions
$\psi_{j-1,m}(x)=\psi(2^j(x-m))$ supported on $q_{j-1,m}$
and equal to 1 on $q_{j,2m}$.
If $C_{j,m,\ell}$ is not empty,
we denote by $\alpha_{j,m,\ell}:S^{n-1}\to\bbR$ a smooth
function supported on $C_{j,m,\ell}$ and equal to 1
on $C_{j,m,\ell+1}$.  Note that, if 
$C_{j,m,\ell}$ is a proper subset of $S^{n-1}$, then
it is strictly included in $C_{j,m,\ell+1}$.

\subsection{Equivalence of topologies}
Grigis and Sj\"ostrand stated~\cite[p.~80]{Grigis} that
if we have a family $\chi_\alpha$ of test functions and closed
cones $V_\alpha$ such that
$(\supp \chi_\alpha\times V_\alpha)\cap\Gamma=\emptyset$
and $\cup_\alpha \{(x,k)\telque \chi_\alpha(x)\not=0
\text{ and } k\in \mathring{V}_\alpha\} =\Gamma^c$, then the topology
of $\calD'_\Gamma$ is the topology given by the
seminorms of the weak topology and the seminorms
$||\cdot||_{N,V_\alpha,\chi_\alpha}$. 
By covering $M$ with a countable
family
of compact sets $K_i$ described 
in section 
\ref{exhaustionsubsec}, we see that
Lemma \ref{approxlemma1}
gives us a family
of indices $\alpha=(i,j,\ell)$, functions
$\chi_{j,m,\ell}=\psi_{j,m}$ and
cones $V_{j,m,\ell}$ adapted to $K_i$
such that the conditions
of the Grigis-Sj\"ostrand
lemma are satisfied. Therefore,
the normal topology
is described by the
seminorms 
of the strong topology
of $\mathcal{D}^\prime(\Omega)$
and by the countable family
$(i,j,m,\ell)$ of seminorms.

\subsection{Topological equivalence $C^\infty(X)$ 
and $\calD'_\emptyset$}
\label{topequiCinfty}

As an application of the previous lemma, we show
\begin{lem}
\label{topequlem}
The spaces $C^\infty(X)$ and
$\calD'_\emptyset$ are topologically isomorphic.
\end{lem}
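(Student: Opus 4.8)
The plan is to identify both $C^\infty(X)$ and $\calD'_\emptyset$ with the same set of distributions and then show that the two natural topologies coincide by comparing fundamental systems of seminorms. First I would recall that $\WF(u)=\emptyset$ is equivalent to $u\in C^\infty(X)$ (this is one of the basic facts about the wave front set), so as sets $\calD'_\emptyset = C^\infty(X)$, and the identity map is a bijection; only the topologies must be compared. On $C^\infty(X)$ the topology is the usual Fréchet topology given by the seminorms $\pi_{n,K}(u)=\sup_{|\alpha|\le n}\sup_{x\in K}|\partial^\alpha u(x)|$ for $K$ compact and $n\in\bbN$. On $\calD'_\emptyset$ the normal topology is, by definition, generated by the seminorms $p_B$ of the strong topology of $\calD'(\Omega)$ together with the seminorms $\|\cdot\|_{N,V,\chi}$, where since $\Gamma=\emptyset$ every closed cone $V$ and every $\chi\in\calD(X)$ is admissible.

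The first direction, that the identity $C^\infty(X)\to\calD'_\emptyset$ is continuous, amounts to bounding each $p_B$ and each $\|\cdot\|_{N,V,\chi}$ by finitely many $\pi_{n,K}$. For $p_B(u)=\sup_{f\in B}|\int_X uf\,dx|$ with $B$ bounded in $\calD(X)$, boundedness of $B$ gives a common compact support $K$ and a bound on the $L^1$ norms of all $f\in B$, so $p_B(u)\le C\,\pi_{0,K}(u)$. For $\|u\|_{N,V,\chi}=\sup_{k\in V}(1+|k|)^N|\widehat{u\chi}(k)|$ one uses the standard estimate: repeated integration by parts gives $(ik)^\alpha\widehat{u\chi}(k)=\widehat{\partial^\alpha(u\chi)}(k)$, hence $(1+|k|)^N|\widehat{u\chi}(k)|\le C_N \sup_{|\alpha|\le N+d+1}\|\partial^\alpha(u\chi)\|_{L^1}\le C_N'\,\pi_{N+d+1,\supp\chi}(u)$ by Leibniz, for all $k\in\bbR^d$ (a fortiori for $k\in V$). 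So the identity map is continuous in this direction.

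The converse direction, that $\calD'_\emptyset\to C^\infty(X)$ is continuous, is the genuine content and the main obstacle. Here I would show that each $\pi_{n,K}(u)$ is dominated by finitely many normal-topology seminorms. The key point is that when $\Gamma=\emptyset$ we may take $V=\bbR^d$ (the full space is a closed cone disjoint from $\Gamma=\emptyset$) and $\chi\equiv 1$ on a neighbourhood of $K$; then $\|u\|_{N,\bbR^d,\chi}=\sup_{k\in\bbR^d}(1+|k|)^N|\widehat{u\chi}(k)|$ controls the decay of $\widehat{u\chi}$ at all rates. By Fourier inversion, for $x\in K$ we have $\partial^\alpha u(x)=\partial^\alpha(u\chi)(x)=(2\pi)^{-d}\int (ik)^\alpha e^{ik\cdot x}\widehat{u\chi}(k)\,dk$, so $|\partial^\alpha u(x)|\le (2\pi)^{-d}\int |k|^{|\alpha|}|\widehat{u\chi}(k)|\,dk \le (2\pi)^{-d}\,\|u\|_{N,\bbR^d,\chi}\int |k|^{|\alpha|}(1+|k|)^{-N}\,dk$, and choosing $N=n+d+1$ makes the last integral finite. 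Taking the sup over $|\alpha|\le n$ and $x\in K$ yields $\pi_{n,K}(u)\le C_{n,K}\,\|u\|_{n+d+1,\bbR^d,\chi}$. Since these are among the defining seminorms of the normal topology, the identity $\calD'_\emptyset\to C^\infty(X)$ is continuous. Hence the identity is a topological isomorphism. The one subtlety to treat carefully is the passage from a single chart (where the Fourier transform is available) to all of $X$: one covers $X$ by countably many relatively compact coordinate charts and checks that the Fréchet topology and the normal topology are each the projective limit of the corresponding local topologies, so the local isomorphisms glue; this uses that every compact $K\subset X$ meets only finitely many charts, and the appendix's exhaustion machinery guarantees a countable admissible family of $(\chi,V)$.
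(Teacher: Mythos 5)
Your proof is correct and follows essentially the same route as the paper: identify the two spaces as sets via the equivalence of empty wave front set with smoothness, then compare seminorms directly via the Fourier transform, using integration by parts for $C^\infty\hookrightarrow\calD'_\emptyset$ and Fourier inversion with a rapidly decaying weight for $\calD'_\emptyset\hookrightarrow C^\infty$, and handling the strong seminorms $p_B$ by a $\pi_{0,K}$ bound. The only point of divergence is your closing paragraph on gluing over charts, which the paper elides because it works on an open subset of $\bbR^n$ throughout; that addition is fine but not needed for what the paper actually proves.
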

\begin{proof}
The two spaces are identical as vector spaces because
a distribution $u$ whose wave front set is empty
is smooth everywhere, since its singular
support $\mathrm{sing\,supp}(u)=\underline{\pi}(\WF(u))$~\cite[p.~254]{HormanderI}
is empty~\cite[p.~42]{HormanderI}, and 
a distribution is a smooth function if and only if
its singular support is empty.

To prove the topological equivalence, we must show that
the two inclusions 
$\calD'_\emptyset\hookrightarrow C^\infty(X)$
and
$C^\infty(X)\hookrightarrow \calD'_\emptyset$
are continuous.
Recall that a system of semi-norms defining the
topology of $C^\infty(X)$ is
$\pi_{m,K}$, where $m$ runs over the integers and
$K$ runs over the compact
subsets of $X$~\cite[p.~88]{Schwartz-66}.
By a straightforward estimate, we 
obtain:
\begin{eqnarray*}
\pi_{m,K}(\varphi) & \le & C_{n} (2\pi)^{-n} 
\sum_{|\alpha|\le m} 
\sup_{k\in \bbR^n}
(1+ \vert k\vert^2)^p |k^\alpha \widehat{\varphi\chi}(k)|\\
&\le & C_{n} (2\pi)^{-n} 
\binom{m+n}{n}
||\varphi||_{m+2p,\bbR^n,\chi},
\end{eqnarray*}
where $\chi\in \calD(X)$ is equal to one on a compact
set whose interior contains $K$ where 
we used $(1+\vert k\vert^2)\le (1+|k|)^2$,
$|k^\alpha|\le (1+|k|)^m$ and
$\sum_{|\alpha|\le m} = \binom{m+n}{n}$.
Thus, every seminorm of $C^\infty(X)$ is bounded
by a seminorm of $\calD'_\emptyset$ and the injection
$\calD'_\emptyset\hookrightarrow C^\infty(X)$ is continuous.

Conversely, for any closed conic set $V$ and any
$\chi\in \calD(X)$ we have 
$||\varphi||_{N,V,\chi}\le ||\varphi||_{N,\bbR^n,\chi}$.
Thus, it is enough to estimate
$||\varphi||_{N,\bbR^n,\chi}$.
We also find that, for any 
integer $N$ and $\alpha=0$,
$\sup_{k\in\bbR^n}
(1+k^2)^N |\widehat{\varphi\chi}(k)|
\le |K|2^N \pi_{2N,K}(\varphi\chi)$,
where $K$ is the support of $\varphi$.
Then, the relation 
$1+|k|\le 2(1+|k|^2)$ and 
application of the Leibniz rule 
give us
$||\varphi||_{N,\bbR^n,\chi}\le 
|K|8^N \pi_{2N,K}(\chi) \pi_{2N,K}(\varphi)$ and
the seminorms $||\cdot||_{N,V,\chi}$ are
controlled by the seminorms of $C^\infty(X)$.
For the seminorms of $\calD'(X)$,
it is well known that the inclusion 
$C^\infty(X)\hookrightarrow \calD'(X)$ is
continuous~\cite[p.~420]{Schwartz-66} when
$\calD'(X)$ has its strong topology. 
Therefore, it is also continuous when
$\calD'(X)$ has its weak topology
and we proved that
$C^\infty(X)$ and $\calD'_\emptyset$ are
topologically isomorphic, where
$\calD'_\emptyset$ can be equipped with the H\"ormander or the
normal topology.
\end{proof}

\subsection*{Acknowledgements}
We are very grateful to Yoann Dabrowski for his generous help
in the functional analysis parts of the paper.
We thank Camille Laurent-Gengoux for discussions
about the geometrical aspects of the pull-back.
The research of N.V.Dang was partly supported by the Labex CEMPI (ANR-11-LABX-0007-01).

\end{document}